\documentclass[10pt]{amsart}

\usepackage{fullpage,amsmath,amsthm,amsfonts,amssymb,
graphicx,amscd,float,hyperref,enumitem,setspace,amsaddr}
\usepackage{comment}
\usepackage{xypic}
\usepackage[T1]{fontenc}
\usepackage[utf8]{inputenc}
\usepackage{mathtools}
\usepackage{aliascnt}

\hypersetup{colorlinks=true,linkcolor=blue,citecolor=blue,urlcolor=blue}

\newtheorem{thm}{Theorem}[section]

\newaliascnt{lem}{thm}
\newtheorem{lem}[lem]{Lemma}
\aliascntresetthe{lem}

\newaliascnt{qst}{thm}

\aliascntresetthe{qst}

\newaliascnt{cor}{thm}
\newtheorem{cor}[cor]{Corollary}
\aliascntresetthe{cor}

\newaliascnt{prop}{thm}
\newtheorem{prop}[prop]{Proposition}
\aliascntresetthe{prop}

\newaliascnt{ex}{thm}
\newtheorem{ex}[ex]{Example}
\aliascntresetthe{ex}

\newaliascnt{nt}{thm}
\newtheorem{nt}[nt]{Notation}
\aliascntresetthe{nt}

\newaliascnt{cl}{thm}

\aliascntresetthe{cl}

\theoremstyle{definition}
\newaliascnt{rem}{thm}
\newtheorem{rem}[rem]{Remark}
\aliascntresetthe{rem}

\newaliascnt{conv}{thm}
\newtheorem{conv}[conv]{Convention}
\aliascntresetthe{conv}

\newaliascnt{defn}{thm}
\newtheorem{defn}[defn]{Definition}
\aliascntresetthe{defn}

\newaliascnt{pd}{thm}
\newtheorem{pd}[pd]{Proposition-Definition}
\aliascntresetthe{pd}

\newaliascnt{prob}{thm}
\newtheorem{prob}[prob]{Problem}
\aliascntresetthe{prob}

\usepackage[nameinlink,capitalize]{cleveref}
\crefname{thm}{Theorem}{Theorems}
\Crefname{thm}{Theorem}{Theorems}
\crefname{prop}{Proposition}{Propositions}
\Crefname{prop}{Proposition}{Propositions}
\crefname{lem}{Lemma}{Lemmas}
\Crefname{lem}{Lemma}{Lemmas}
\crefname{cor}{Corollary}{Corollaries}
\Crefname{cor}{Corollary}{Corollaries}
\crefname{rem}{Remark}{Remarks}
\Crefname{rem}{Remark}{Remarks}
\crefname{ex}{Example}{Examples}
\Crefname{ex}{Example}{Examples}
\crefname{defn}{Definition}{Definitions}
\Crefname{defn}{Definition}{Definitions}
\crefname{prob}{Problem}{Problems}
\Crefname{prob}{Problem}{Problems}
\crefname{qst}{Question}{Questions}
\Crefname{qst}{Question}{Questions}
\crefname{cl}{Claim}{Claims}
\Crefname{cl}{Claim}{Claims}
\crefname{nt}{Notation}{Notations}
\Crefname{nt}{Notation}{Notations}
\crefname{conv}{Convention}{Conventions}
\Crefname{conv}{Convention}{Conventions}
\crefname{pd}{Proposition-Definition}{Proposition-Definitions}
\Crefname{pd}{Proposition-Definition}{Proposition-Definitions}

\newcommand{\SL}{\operatorname{SL}}
\newcommand{\Tr}{\operatorname{Tr}}
\newcommand{\C}{\mathbb C}
\newcommand{\Z}{\mathbb Z}

\newcommand{\norm}[1]{\left\lVert #1\right\rVert}
\newcommand{\cyc}{\operatorname{cyc}}
\newcommand{\supp}{\operatorname{supp}}
\renewcommand{\phi}{\varphi}

\newcommand{\bit}{\operatorname{bit}}

\makeatletter
\@namedef{subjclassname@2020}{%
  \textup{2020} Mathematics Subject Classification}
\makeatother

\title[On quantitative aspects of trace polynomials]{On quantitative aspects of trace polynomials}

\author{Ilya Kapovich}

\address{Department of Mathematics and Statistics, Hunter College of CUNY\newline
  \indent 695 Park Ave, New York, NY 10065, U.S.A.
  \newline \indent e-mail {\tt ik535@hunter.cuny.edu}
  \newline\indent  ORCID 0000-0002-7694-6236
  }

\subjclass[2020]{Primary 20C15, 20F65; Secondary 14D20, 57K20, 57M99}
\keywords{free group, trace polynomial, Fricke character, word map, random word}
\date{}

\begin{document}

\begin{abstract}
By the classic results of Fricke and Klein~\cite{FrickeKlein}, for every word $w$ in the free group $F(a,b)$ there exists a unique integer
\emph{trace polynomial} $f_w(x,y,z)\in \Z[x,y,z]$ such that
\[
\Tr(w(A,B))=f_w(\Tr A,\Tr B,\Tr AB)
\]
for all $A,B\in \SL(2,\C)$.  In this paper we study quantitative aspects of trace polynomials.  We prove an exact formula for the leading homogeneous part of $f_w$ for every nontrivial cyclically reduced word $w\in F(a,b)$.  In particular, if $w=u_1\cdots u_n$ is cyclically reduced over $\{a,a^{-1},b,b^{-1}\}$, and if $N_{rs}(w)$ denotes the number of cyclic occurrences of the adjacent pair $rs$, then
\[
 \deg f_w=n-N_{ab}(w)-N_{b^{-1}a^{-1}}(w)
      =n-\frac{1}{2}\bigl(N_{ab}(w)+N_{ba}(w)+N_{a^{-1}b^{-1}}(w)+N_{b^{-1}a^{-1}}(w)\bigr).
\]
Consequently we obtain sharp general bounds $\lceil n/2\rceil\le \deg f_w\le n$ for arbitrary $w\in F(a,b)$ with cyclically reduced length $n$, and an exact formula $\deg f_w=n-s$ for positive words $w=a^{\alpha_1}b^{\beta_1}\dots a^{\alpha_s}b^{\beta_s}$.  We also study $\deg f_w$ for random positive words and for random freely reduced and random cyclically reduced words.  We also obtain explicit exponential upper bounds for the growth of the $\ell_1$ and $\ell_\infty$ norms of $f_w$ and exhibit examples with exponential coefficient growth at rate $\varphi^n$, where $\varphi$ is the golden ratio.  We show that for random freely reduced, random cyclically reduced and random positive words $w_n$ of length $n$ in $F(a,b)$ the size of the support of $f_{w_n}$ grows at least quadratically in $n$ and the total bit-size of the nonzero coefficients of $f_{w_n}$ grows at least as constant multiple of $n^3$. Consequently, any algorithm computing $f_w$ in totally expanded form has worst-case time complexity as well as generic-case time complexity for the above models bounded below by $\Omega(n^3)$.  We also give a deterministic algorithm which computes the fully expanded polynomial $f_w$ in time $O(n^5)$ and space $O(n^4)$, in terms of the input word length $n$.  As a consequence, $\SL(2,\C)$-character equivalence for elements $v,w\in F(a,b)$ is decidable in time $O(n^5)$, where $n$ is the maximum of the lengths of $v$ and $w$.
\end{abstract}

\maketitle

\tableofcontents

\section{Introduction}\label{sec:introduction}

Trace polynomials for group words in two matrices were introduced in the classic 1897 work of Fricke and Klein~\cite{FrickeKlein} and they play a fundamental role in the study of characters of two-generator groups.  Fricke and Klein showed that for every element $w\in F(a,b)$ there exists a polynomial $f_w(x,y,z)\in \Z[x,y,z]$, called the \emph{trace polynomial of $w$} such that
\[
\Tr(w(A,B))=f_w(\Tr A,\Tr B,\Tr AB)
\]
for all $A,B\in \SL(2,\C)$. Moreover, $f_w$ is known to be uniquely determined by $w$, see \cite{Horowitz1972}. The three trace functions
\[
  x=\Tr A,\qquad y=\Tr B,\qquad z=\Tr(AB)
\]
provide affine polynomial coordinates on the affine GIT quotient
\[
 \operatorname{Hom}(F(a,b),\SL(2,\C))//\SL(2,\C),
\]
namely the $\SL(2,\C)$-character variety of $F(a,b)$. This quotient is isomorphic to
$\C^3$, and its coordinate ring is naturally identified with the polynomial ring
$\C[x,y,z]$. Thus the single polynomial $f_w$ is the coordinate expression of
the regular trace function associated to $w$ in these Fricke coordinates. This
special three-coordinate description is one of the main reasons why the rank-two
$\SL(2,\C)$ setting permits the explicit degree, coefficient-growth, and algorithmic
estimates studied below. Since in $\SL(2,\C)$ traces of matrices are preserved by inversion and conjugation, the trace polynomial $f_w$ for $w\in F(a,b)$ is also invariant under conjugating and inverting $w$ in $F(a,b)$. 


Trace polynomials naturally appear in the study of character varieties and trace algebras,
including Horowitz's work on characters of free groups \cite{Horowitz1972}, the
Culler--Shalen theory of character varieties \cite{CullerShalen1983}, Goldman's work
on trace functions and surface-group character varieties \cite{Goldman1986}, and the
general invariant-theoretic viewpoint of Procesi \cite{Procesi1976} and Sikora
\cite{Sikora2012}.   The algorithmic constructive computation of
$f_w$ from trace identities is also standard, but the precise computational complexity of computing $f_w$ does not seem to have been addressed in the literature so far.

Apart from Horowitz's work, trace-polynomial computations also appear explicitly
in algorithmic invariant-theory contexts, for example in the $L_2$-quotient
algorithm of Plesken--Fabia\'nska and its generalization by Jambor
\cite{PleskenFabianska2009,Jambor2015}.  However, in those works the emphasis is on
constructing and using trace-coordinate descriptions of representation varieties,
rather than on the computational complexity of the fully expanded polynomial for a given input word $w\in F(a,b)$.

There are two closely related bodies of literature which provide additional motivation
for the questions considered here.  First, trace equivalence of individual words is
closely related to the problem of distinguishing conjugacy classes by matrix traces.  Horowitz
constructed non-conjugate words with the same $\SL(2,\C)$ trace polynomial
\cite{Horowitz1972}.  In the terminology used later in this paper, two words
$v,w\in F(a,b)$ are $\SL(2,\C)$-character equivalent precisely when
$f_v=f_w$.  Trace-polynomial identities also occur naturally in the study of
translation equivalence in free groups: Kapovich--Levitt--Schupp--Shpilrain
identify trace identities as one of the sources of translation-equivalent elements
and relate translation equivalence to Whitehead-graph data
\cite{KapovichLevittSchuppShpilrain2007}.  Lawton, Louder, and McReynolds studied
higher-rank trace equivalence in connection with decision problems, complexity,
and representation theory \cite{LawtonLouderMcReynolds2017}; in particular, their
work emphasizes that the classical $\SL(2,\C)$ phenomena do not automatically persist
in rank $3$ and higher.  Character equivalence is equality of trace functions, or equivalently equality of trace polynomials in the rank-two $\SL(2,\C)$ case, whereas translation equivalence concerns equality of translation lengths in all free isometric actions on $\mathbb R$-trees.  These notions are related through trace identities and Whitehead-graph data, but they are not identical.
Second, for $\SL(3,\C)$ the character varieties of free groups and the corresponding
trace-coordinate rings are much more complicated than the three-coordinate
$\SL(2,\C)$ case.  Lawton gave explicit generators and relations for pairs of
$3\times3$ unimodular matrices \cite{Lawton2007Pairs}, determined minimal affine
coordinates for $\SL(3,\C)$ free-group character varieties \cite{Lawton2008Minimal},
and studied algebraic independence of natural $\SL(3,\C)$ trace coordinates
\cite{Lawton2010AlgInd}.  These higher-rank results show why the elementary
three-variable Fricke setting treated here is unusually tractable, and they provide
natural targets for possible analogues of the degree, coefficient-growth, and
complexity estimates proved below.

In this paper we study several quantitative aspects of $f_w$ in terms of the input word $w$.  These include the degree of $f_w$, the growth of its $\ell_1$ and $\ell_\infty$ coefficient norms, the size of its support, and the bit-size of its coefficients, both in worst-case examples and in natural random models.  We also address the deterministic complexity of computing $f_w$ in fully expanded form.  The output-size lower bounds below come from explicit coefficient slices and therefore apply to any algorithm which actually writes the expanded sparse polynomial.


We now state the main results proved below.  We fix the free basis $\mathcal{X}=\{a,b\}$ for the free group of rank two, $F_2=F(\mathcal{X})=F(a,b)$. 

For a word $w$ over $\mathcal{X}^{\pm 1}$ we denote by $|w|$ the length of $w$, we denote by $|w|_{\mathcal X}$ the freely reduced length of $w$, and we denote by $||w||_{\mathcal X}$ the cyclically reduced length of $w$.  For a nontrivial cyclically reduced word $w\in F(a,b)$ the \emph{syllable length} $||w||_{syl}$ of $w$ is the number of maximal $a$-power and $b$-power subwords in $w$, counted cyclically. E.g. $||a^3||_{syl}=1$, $||a^9b^{-1}||_{syl}=2$ and $||ab^3a^4||_{syl}=2$. Note that if $w$ is not an $a$-power and not a $b$-power then $||w||_{syl}\ge 2$ is even.

We first obtain an exact leading-term formula for arbitrary cyclically reduced words.  Let the letters of a cyclically reduced word be read cyclically.  For letters
$r,s\in\{a,a^{-1},b,b^{-1}\}$ let $N_{rs}(w)$ be the number of cyclic indices $i$ for which
$u_i=r$ and $u_{i+1}=s$.  The proof of the exact leading-term statement in
\cref{thm:intro-degree-bounds}(1) is given in \cref{prop:top-homogeneous-exact-degree}; the consequent sharp
bounds in \cref{thm:intro-degree-bounds}(2) are proved in \cref{thm:degree-lower}.

\begin{thm}\label{thm:intro-degree-bounds}
Let $w=u_1\cdots u_n\in F(a,b)$ be a nontrivial cyclically reduced word over
$\{a,a^{-1},b,b^{-1}\}$.  Put
\[
 m_a=\#\{i:u_i\in\{a,a^{-1}\}\},\qquad
 m_b=\#\{i:u_i\in\{b,b^{-1}\}\},\qquad
 R=N_{ab}(w)+N_{b^{-1}a^{-1}}(w).
\]
Then the following hold.
\begin{enumerate}
\item The top homogeneous part of $f_w$ is
\[
 [f_w]_{\rm top}=(-1)^{N_{ab}(w)+N_{ba}(w)}
 x^{m_a-R}y^{m_b-R}z^R.
\]
In particular,
\[
 \deg f_w=n-R
 =n-\frac{1}{2}\bigl(N_{ab}(w)+N_{ba}(w)+N_{a^{-1}b^{-1}}(w)+N_{b^{-1}a^{-1}}(w)\bigr).
\]
\item Consequently,
\[
\left\lceil \frac{n}{2}\right\rceil\le \deg f_w\le n,
\]
and, for every $n\ge 1$, both the lower and upper bounds are attained by some cyclically reduced word of length $n$.
\end{enumerate}
\end{thm}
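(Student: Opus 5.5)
The plan is to read off the top homogeneous part of $f_w$ by evaluating $\Tr w(A,B)$ on a one-parameter family of matrices. Along this family the three Fricke coordinates all grow linearly in a parameter $t$, and the leading behaviour of the matrix product can be computed combinatorially.

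\textbf{Choice of family.} Fix generic $x,y,z\in\C$ and let $t\to\infty$. Choose $s=s(t)$ with $s+s^{-1}=zt$, so $s=zt+O(1/t)$. Put
\[
A=\begin{pmatrix} xt & -1\\ 1 & 0\end{pmatrix},\qquad
B=\begin{pmatrix} 0 & s\\ -s^{-1} & yt\end{pmatrix}.
\]
Both have determinant $1$, and $\Tr A=xt$, $\Tr B=yt$. A direct computation gives
\[
AB=\begin{pmatrix} s^{-1} & xts-yt\\ 0 & s\end{pmatrix},
\]
so $\Tr AB=s+s^{-1}=zt$. Hence $\Tr w(A,B)=f_w(xt,yt,zt)=t^{d}[f_w]_{\rm top}(x,y,z)+O(t^{d-1})$ with $d=\deg f_w$. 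It therefore suffices to show that
\[
\Tr w(A,B)=(-1)^{N_{ab}+N_{ba}}\,x^{m_a-R}y^{m_b-R}z^R\,t^{n-R}+O(t^{n-R-1}).
\]
Indeed, if the right-hand side has leading term $c(x,y,z)\,t^{e}$ with $c\neq 0$ for generic $x,y,z$, then $d=e$ and $[f_w]_{\rm top}=c$.

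\textbf{Expanding the trace.} Write $\Tr w(A,B)=\sum_{(k_1,\dots,k_n)}\prod_i (U_i)_{k_ik_{i+1}}$, summed over cyclic index sequences in $\{1,2\}^n$, where $U_i$ is the matrix of $u_i$. The inverses are
\[
A^{-1}=\begin{pmatrix}0&1\\-1&xt\end{pmatrix},\qquad
B^{-1}=\begin{pmatrix}yt&-s\\ s^{-1}&0\end{pmatrix}.
\]
Each entry has order $t^1$, $t^0$, $t^{-1}$, or is $0$, so each cyclic path gets a weight equal to the sum of these orders.
\begin{itemize}
\item For $a$ and $a^{-1}$ the weight-one entries are $(1,1)$ and $(2,2)$ respectively, each equal to $\pm xt$.
\item For $b$ the weight-one entries are $(2,2)=yt$ and $(1,2)=s\approx zt$.
\item For $b^{-1}$ they are $(1,1)=yt$ and $(1,2)=-s$.
\item The weight-zero entries are the $\pm1$ entries, and the $s^{-1}$ entries have weight $-1$.
\end{itemize}
A path therefore achieves weight $n$ only if every step is large, and the index constraints force a loss at certain adjacencies. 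The plan is a transfer-matrix (max-plus) analysis of the cyclic word.
\begin{itemize}
\item At an adjacency $ab$, the state $1$ after $a$ can be routed through the $(1,2)$ entry $s$ of $B$. This contributes $z$ in place of $x\cdot y$, so one degree is lost.
\item Similarly, $b^{-1}a^{-1}$ uses $-s$ followed by the $(2,2)$ entry of $A^{-1}$.
\item At other adjacencies the state changes cost one unit via a $\pm1$ entry, or cost more.
\end{itemize}
I would show that the maximal total weight is exactly $n-R$ and that the maximizing path is unique: at each $ab$ and each $b^{-1}a^{-1}$ one uses the $z$-entry, and elsewhere one follows diagonal large entries, with forced state switches paid by $\pm1$ entries. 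The unique path then yields the monomial $x^{m_a-R}y^{m_b-R}z^R$.

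\textbf{Main obstacle.} The hardest step is the bookkeeping in this optimisation. I must prove that no competing path reaches weight $n-R$, since otherwise the leading terms could cancel. I must also verify that the sign product equals $(-1)^{N_{ab}+N_{ba}}$, which amounts to counting the $-1$ entries used, one at each $ba$-type and $a^{-1}b^{-1}$-type switch. I would first try to organise the word into maximal segments between the mixed-letter adjacencies and do a case analysis on the eight kinds of $a^{\pm}b^{\pm}$ and $b^{\pm}a^{\pm}$ transitions.

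\textbf{Equality of the two degree formulas.} This is a cyclic counting identity, $N_{ab}+N_{b^{-1}a^{-1}}=N_{ba}+N_{a^{-1}b^{-1}}$. Going around the cyclic word, each exit from a maximal $a$-syllable into a $b$-syllable is matched by an entry, and grouping these by signs gives the identity. Alternatively it follows from $f_w=f_{w^{-1}}$ together with the formula applied to $w^{-1}$, which interchanges $N_{ab}$ with $N_{b^{-1}a^{-1}}$ and $N_{ba}$ with $N_{a^{-1}b^{-1}}$.

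\textbf{Part (2).} The second formula for $\deg f_w$ subtracts half of the number of certain cyclic adjacencies, and there are only $n$ adjacencies in total. This gives $\deg f_w\ge n-n/2$, hence $\deg f_w\ge\lceil n/2\rceil$, and clearly $\deg f_w\le n$. For sharpness:
\begin{itemize}
\item $w=a^n$ attains $\deg f_w=n$.
\item $w=(ab)^{n/2}$ for even $n$, and $w=(ab)^{k}a$ for $n=2k+1$, give $R=\lfloor n/2\rfloor$ and hence $\deg f_w=\lceil n/2\rceil$.
\end{itemize}
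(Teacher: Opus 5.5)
Your strategy is the paper's. You specialize to $\SL_2$ over a Laurent-series field so that $\Tr A,\Tr B,\Tr AB$ are all linear in $t$, expand $\Tr w(A,B)$ over closed two-state paths, and single out a unique path of maximal weight. Part (2) also matches the paper, with the same extremal words. Your $B$ is exactly the paper's $\mathsf B_t$ with the other root of $q+q^{-1}=zt$ chosen, and this mirrors the combinatorics. The plan is sound, but the bookkeeping you sketch for the step you call the main obstacle is wrong for your matrices.

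\textbf{Where the degree is lost.} With your $B$, the entries $B_{12}=s$ and $(B^{-1})_{12}=-s$ have weight one, so nothing is lost at an $ab$ or $b^{-1}a^{-1}$ adjacency. The weight-one transitions are $1\to1$ for $a$, $2\to2$ for $a^{-1}$, $1\to2$ and $2\to2$ for $b$, and $1\to1$ and $1\to2$ for $b^{-1}$. Hence the only reduced adjacencies at which a weight-one exit of $u$ cannot feed a weight-one entry of $v$ are $ba$ and $a^{-1}b^{-1}$. There the $a$-type letter pays one unit, via $A_{21}=+1$ or $(A^{-1})_{21}=-1$ respectively. Making the $b$-type letter pay instead costs two, because of the $\mp s^{-1}$ entries.

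So you are right that the $z$-entries sit at $ab$ and $b^{-1}a^{-1}$, but charging a loss there and again at the forced switches double counts. Put $L=N_{ba}+N_{a^{-1}b^{-1}}$. The unique maximal path has weight $n-L$ and monomial $x^{m_a-L}y^{m_b-R}z^{R}$. To reach the stated formula you need $L=R$, so the balance identity is an input to part (1), not just a reconciliation of the two degree formulas. In the paper's normalization the roles are swapped: losses occur at $ab$ and $b^{-1}a^{-1}$, and $z$-entries at $ba$ and $a^{-1}b^{-1}$.

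\textbf{The sign and the balance identity.} With your matrices the sign of the maximal path is $(-1)^{N_{a^{-1}b^{-1}}+N_{b^{-1}a^{-1}}}$. One factor comes from each $(A^{-1})_{21}=-1$ and one from each $-s$. This equals $(-1)^{N_{ab}+N_{ba}}$ only after reducing the balance identity mod $2$. Your recipe, a $-1$ at every $ba$ and every $a^{-1}b^{-1}$ switch, gives $(-1)^R$. It already fails for $w=ab$, where the maximal path is $A_{21}B_{12}=+s$, in agreement with $f_{ab}=z$.

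Your alternative derivation of the balance identity from $f_w=f_{w^{-1}}$ is vacuous. The interchanges you list give $R(w^{-1})=R(w)$, so the formula for $w^{-1}$ says nothing new. What works is the swap $\sigma\colon a\leftrightarrow b$. It gives $f_{\sigma(w)}(x,y,z)=f_w(y,x,z)$, so $\deg f_{\sigma(w)}=\deg f_w$, and $L(\sigma(w))=R(w)$. Your path count $\deg f_w=n-L(w)$, applied to both $w$ and $\sigma(w)$, then yields $L=R$.

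Alternatively, use the paper's argument. Give $a,b^{-1}$ weight $+1$ and $a^{-1},b$ weight $-1$. Then $ab$ and $b^{-1}a^{-1}$ are exactly the $+\to-$ changes, $ba$ and $a^{-1}b^{-1}$ are exactly the $-\to+$ changes, and these alternate around the cycle. Merely matching $a\to b$ exits with $b\to a$ entries, as in your main sketch, does not suffice.
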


In the probabilistic statements below, the notation $O_D(\cdot)$ means that the
implicit multiplicative constant may depend on the fixed parameter $D>0$, but not on $n$.


For positive words in $F(a,b)$ we obtain both a precise result and a probabilistic result about $\deg f_w$, see \cref{prop:positive-degree,cor:random-positive-degree}:

\begin{thm}\label{thm:intro-positive-degree}
The following hold:
\begin{enumerate}
\item Let
\[
 w=a^{\alpha_1}b^{\beta_1}\cdots a^{\alpha_s}b^{\beta_s}
\]
be a positive word in $F(a,b)$ with $s\ge 1$ and all $\alpha_i,\beta_i>0$. Let $n=||w||_{\mathcal X}=\sum_{i=1}^s (\alpha_i+\beta_i)$.

Then
\[
 \deg f_w=n-s.
\]
\item Let $0<p<1$.  Let $w_n$ be a random positive word in $F(a,b)$ of length $n$ with independently chosen 
letters satisfying $\Pr(a)=p$ and $\Pr(b)=1-p$. Then for every $D>0$,
\[
 \deg f_{w_n}=\bigl(1-p(1-p)\bigr)n+O_D(\sqrt{n\log n})
\]
with probability at least $1-O_D(n^{-D})$ as $n\to\infty$. 

The degenerate cases $p=0$ and $p=1$ are one-generator cases and have degree exactly $n$.
\end{enumerate}
\end{thm}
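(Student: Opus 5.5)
Part (1) is a direct specialization of \cref{thm:intro-degree-bounds}(1). For the positive word $w=a^{\alpha_1}b^{\beta_1}\cdots a^{\alpha_s}b^{\beta_s}$ with all exponents positive, read cyclically, only the letters $a,b$ occur. Hence $N_{b^{-1}a^{-1}}(w)=0$, and $N_{ab}(w)$ counts the cyclic positions where an $a$-syllable is followed by a $b$-syllable. Since the syllables alternate cyclically and there are $s$ of each kind, $N_{ab}(w)=s$ exactly (and likewise $N_{ba}(w)=s$). The word is nontrivial and cyclically reduced, so \cref{thm:intro-degree-bounds}(1) applies with $R=s$ and gives $\deg f_w=n-s$. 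For completeness, the top part is $(-1)^{2s}x^{m_a-s}y^{m_b-s}z^s$, which is a nonzero monomial since $m_a=\sum\alpha_i\ge s$ and $m_b=\sum\beta_i\ge s$.

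For part (2), the first step is to reduce the degree to a count of cyclic $ab$-transitions. For a random positive word $w_n=u_1\cdots u_n$ that contains both letters, $w_n$ is cyclically reduced, and the formula above (or directly \cref{thm:intro-degree-bounds}(1), where again only $N_{ab}$ contributes to $R$) gives
\[
\deg f_{w_n}=n-N_{ab}(w_n),\qquad N_{ab}(w_n)=\sum_{i=1}^n \mathbf 1[u_i=a,\ u_{i+1}=b],
\]
with indices taken mod $n$. If $w_n$ is a power of a single letter, then $\deg f_{w_n}=n$ and $N_{ab}=0$, so the same formula still holds. That event has probability $p^n+(1-p)^n$, which is exponentially small, so it could also simply be discarded.

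The second step is concentration. Each indicator $X_i=\mathbf 1[u_i=a,u_{i+1}=b]$ has mean $p(1-p)$, so $\mathbb E N_{ab}=p(1-p)n$. The $X_i$ are $1$-dependent. Splitting them into the two families with $i$ even and $i$ odd (with a single wrap-around term when $n$ is odd, changing the count by at most $1$), each family is independent. Hoeffding's inequality then gives
\[
\Pr\bigl(|N_{ab}-p(1-p)n|>t\bigr)\le 4e^{-t^2/n}+o(1)\text{-type terms}.
\]
Alternatively, McDiarmid's bounded-differences inequality applies directly: changing one letter $u_i$ changes $N_{ab}$ by at most $2$, so
\[
\Pr\bigl(|N_{ab}-\mathbb E N_{ab}|\ge t\bigr)\le 2\exp\bigl(-t^2/(2n)\bigr).
\]
Choosing $t=\sqrt{2Dn\log n}$ makes the right-hand side $2n^{-D}$. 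Hence
\[
\deg f_{w_n}=(1-p(1-p))n+O_D(\sqrt{n\log n})
\]
with probability at least $1-O_D(n^{-D})$. In the degenerate cases $p\in\{0,1\}$ the word is $a^n$ or $b^n$, whose trace polynomial is the Chebyshev-type polynomial of degree $n$ in one variable.

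There is no serious obstacle here. The only points needing care are the cyclic boundary term in the dependence structure, and ensuring that the one-generator case does not invalidate the syllable count; McDiarmid's inequality sidesteps the dependence issue entirely.
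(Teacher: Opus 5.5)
Your proposal is correct and takes essentially the paper's route. Part (1) is read off from the exact degree formula with $R(w)=N_{ab}(w)=s$, and Part (2) applies McDiarmid's bounded-differences inequality to the cyclic transition count; the only cosmetic difference is that the paper works with the total number $T_n=2N_{ab}$ of cyclic $a$/$b$ transitions while you use $N_{ab}$ directly, and your Hoeffding aside with ``$o(1)$-type terms'' is imprecise (the wrap-around term is only a deterministic shift of at most $1$), though the McDiarmid argument you give is complete without it.
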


For random freely reduced and random cyclically reduced words in $F(a,b)$ we prove an exact typical-degree asymptotic, \cref{prop:random-freely-reduced-degree,cor:random-cyclically-reduced-degree}:

\begin{thm}\label{thm:intro-random-freely-cyclically-reduced}
Let $W_n\in F(a,b)$ be a random freely reduced word of length $n$ in $F(a,b)$, generated
by the standard nonbacktracking Markov chain.  Then for every $D>0$,
\[
 \deg f_{W_n}= \frac{5n}{6}+O_D(\sqrt{n\log n})
\]
with probability at least $1-O_D(n^{-D})$.  The same estimate holds for uniformly
random cyclically reduced words of length $n$.
\end{thm}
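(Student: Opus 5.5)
We reduce \cref{thm:intro-random-freely-cyclically-reduced} to \cref{thm:intro-degree-bounds}(1) and a concentration estimate for additive functionals of a finite Markov chain. By \cref{thm:intro-degree-bounds}(1), for a cyclically reduced $w$ of length $n$ we have
\[
\deg f_w = n - \tfrac12\bigl(N_{ab}+N_{ba}+N_{a^{-1}b^{-1}}+N_{b^{-1}a^{-1}}\bigr)(w).
\]
So it suffices to show that the total count of cyclic adjacent pairs of the four ``same-sign mixed'' types is $n/3 + O_D(\sqrt{n\log n})$ with probability $1-O_D(n^{-D})$. The constant comes from the stationary distribution. The nonbacktracking chain on the four letters has uniform stationary distribution $1/4$, and each transition goes to one of $3$ letters with probability $1/3$. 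From $a$, the allowed successors are $a,b,b^{-1}$, so a transition $a\to b$ has probability $1/3$. Likewise for the pairs $(b,a)$, $(a^{-1},b^{-1})$ and $(b^{-1},a^{-1})$. The expected fraction of good pairs is therefore $4\cdot\tfrac14\cdot\tfrac13=\tfrac13$, and $n-\tfrac12\cdot\tfrac n3=\tfrac{5n}{6}$.

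For the freely reduced case, $W_n$ need not be cyclically reduced. I would first note that with probability $1-O(3^{-k})$ the cyclic reduction strips at most $k$ letters from each end. Taking $k=C_D\log n$ makes the failure probability $O(n^{-D})$. The cyclically reduced core $w'$ then differs from $W_n$ only in $O(\log n)$ letters at the ends plus one wrap-around pair. Hence $N_{rs}(w')$ equals the linear (noncyclic) count of $rs$ in $W_n$ up to $O(\log n)$, and $\|W_n\|_{\mathcal X}=n-O(\log n)$. Since $f_{W_n}=f_{w'}$ by conjugation invariance, it remains to control the linear count $S_n=\sum_{i<n}\mathbf 1[(u_i,u_{i+1})\in G]$, where $G$ is the set of the four good pairs.

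The main step is a Hoeffding-type bound for $S_n$. The pair process $(u_i,u_{i+1})$ is itself a finite Markov chain on the $12$ allowed edges. This chain is irreducible and aperiodic: the transition matrix of the letter chain has a positive square, since any letter reaches any letter in two steps when a loop such as $a\to a\to b$ is allowed. The initial letter is uniform, so the chain starts stationary and $\mathbb E S_n=(n-1)/3$. For the deviation bound I would apply a Hoeffding inequality for uniformly ergodic Markov chains (e.g.\ Glynn--Ormoneit or Paulin), which gives $\Pr(|S_n-\mathbb E S_n|>t)\le 2\exp(-c t^2/n)$. Alternatively, one can cut the sequence into blocks and use that the chain mixes exponentially fast. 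Setting $t=C\sqrt{n\log n}$ yields failure probability $O(n^{-D})$. This concentration step, together with a careful statement of which inequality applies, is what I expect to be the main technical point. The rest is bookkeeping of $O(\log n)$ errors, which are absorbed into $O_D(\sqrt{n\log n})$.

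For uniformly random cyclically reduced words of length $n$, I would use the standard fact that such a word is obtained from the nonbacktracking chain of length $n$ conditioned on $u_n\ne u_1^{-1}$. This conditioning event has probability bounded below by a positive constant (tending to $3/4$). Conditioning therefore inflates failure probabilities by at most a constant factor, and the cyclic count differs from the linear count by at most $1$. The same estimate follows directly from \cref{thm:intro-degree-bounds}(1).
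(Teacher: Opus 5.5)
Your proposal is correct and follows essentially the same route as the paper (\cref{prop:random-freely-reduced-degree,cor:random-cyclically-reduced-degree}). Both arguments combine the exact degree formula, $O(\log n)$ cyclic cancellation at the two ends, concentration of a transition count in the nonbacktracking chain, and conditioning on the event $u_n\ne u_1^{-1}$ (of probability about $3/4$) for the cyclically reduced model. The differences are cosmetic: you count all four same-sign mixed pairs and halve, whereas the paper counts only $R=N_{ab}+N_{b^{-1}a^{-1}}$ (equivalent by \cref{lem:type-change-count-balance}); and you cite an off-the-shelf Hoeffding inequality for uniformly ergodic chains, whereas the paper proves the needed bound directly via a Doob martingale and coupling in \cref{lem:directed-transition-concentration}.
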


The following statement, regarding the possible values of the constant term of $f_w$, is probably well known informally, but we prove it for completeness, see \cref{thm:constant-term}:

\begin{thm}\label{thm:intro-constant-term}
For every $w\in F(a,b)$,
\[
 f_w(0,0,0)\in\{-2,0,2\}.
\]
\end{thm}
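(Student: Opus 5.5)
The idea is to evaluate the trace polynomial at one convenient point. I would pick $A,B\in\SL(2,\C)$ with $\Tr A=\Tr B=\Tr AB=0$; then $f_w(0,0,0)=\Tr(w(A,B))$ for every $w$. Such matrices exist:
\[
A=\begin{pmatrix} i & 0\\ 0 & -i\end{pmatrix},\qquad B=\begin{pmatrix} 0 & 1\\ -1 & 0\end{pmatrix}.
\]
Both have determinant $1$ and trace $0$. Their product is $AB=\begin{pmatrix} 0 & i\\ i & 0\end{pmatrix}$, which also has trace $0$. These are the quaternion units $\mathbf i,\mathbf j$ in their standard $2\times 2$ complex representation, so $A^2=B^2=-I$ and $BA=-AB$.

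Next I would identify the subgroup $\langle A,B\rangle$. It is the quaternion group $Q_8=\{\pm I,\pm A,\pm B,\pm AB\}$. To see this, note that the relations $A^2=B^2=-I$ and $BA=-AB$ let every word in $A^{\pm1},B^{\pm1}$ be rewritten as $\pm A^{\epsilon}B^{\delta}$ with $\epsilon,\delta\in\{0,1\}$, using $A^{-1}=-A$ and $B^{-1}=-B$. So $w(A,B)$ lies in this eight-element set.

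Finally I would compute traces on $Q_8$. We have $\Tr(\pm I)=\pm 2$, while $\Tr(\pm A)=\Tr(\pm B)=\Tr(\pm AB)=0$. Hence $f_w(0,0,0)=\Tr(w(A,B))\in\{-2,0,2\}$. It is worth noting more precisely that $w(A,B)=\pm I$ exactly when both exponent sums of $w$ are even, since the map to $Q_8/\{\pm I\}\cong(\Z/2)^2$ records the exponent sums mod $2$. In that case the sign is computable, but we don't need it.

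There is essentially no obstacle here. The only points needing care are checking the choice of matrices and the closure of $Q_8$ under the group operations. Unlike other approaches, this one does not use uniqueness of $f_w$, only the defining identity $\Tr(w(A,B))=f_w(\Tr A,\Tr B,\Tr AB)$ evaluated at this particular pair.
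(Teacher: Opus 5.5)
Your proposal is correct and is essentially the paper's own proof: it uses the same quaternion matrices $\mathbf i=\operatorname{diag}(i,-i)$ and $\mathbf j=\begin{pmatrix}0&1\\-1&0\end{pmatrix}$ with all three Fricke traces zero. From there it concludes that $w(A,B)$ lies in $Q_8$, whose traces are $\pm 2$ or $0$. Your extra remark characterizing when $w(A,B)=\pm I$ via exponent sums mod $2$ is also correct, though not needed.
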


\phantomsection\label{not:coefficient-bit-norms}%
For a polynomial $f\in \Z[x,y,z]$ and integers $i,j,k\ge 0$, denote by $[x^iy^jz^k]f$ the coefficient of the monomial $x^iy^jz^k$ in $f$.
Also, for $f\in \Z[x,y,z]$ denote
\[
 \norm{f}_\infty=\max_{i,j,k}|[x^iy^jz^k]f|
\]
and
\[
 \norm{f}_1=\sum_{i,j,k}|[x^iy^jz^k]f|.
\]
Also, for a nonzero integer $m\ne 0$, denote by $\bit(m)$ the binary length, or bit-length, of $m$ (disregarding the possible minus sign, so that $\bit(m)=\bit(-m)$).  We set $\bit(0)=0$ only when this convention is explicitly needed.  For $f\ne 0$ in $\Z[x,y,z]$ denote
\[
 \norm{f}_{\bit,\infty}=\max_{[x^iy^jz^k]f\ne 0}\bit([x^iy^jz^k]f),
\]
with the convention that $\norm{0}_{\bit,\infty}=0$, and define
\[
 \norm{f}_{\bit,1}=\sum_{[x^iy^jz^k]f\ne 0} \bit([x^iy^jz^k]f).
\]
Thus both bit-size quantities are taken only over nonzero coefficients, except for the explicit zero-polynomial convention above.

Put $\varphi=\frac{1+\sqrt5}{2}$, the golden ratio, and put
\[
 \varphi_0=\sqrt{2\varphi^2+1}.
\]
Note that 
\[
1.6<\varphi<\varphi_0<2.5
\]

We obtain an exponential upper bound for the growth of coefficients of $f_w$ and show that such exponential growth does in fact occur, see \cref{thm:coefficient-upper,thm:coefficient-lower}:

\begin{thm}\label{thm:intro-coeff-growth}
The following hold:
\begin{enumerate}
\item For every nontrivial $w\in F(a,b)$ with $n=||w||_{\mathcal X}$ we have
\[
 \norm{f_w}_\infty\le 2\varphi_0^n,
 \qquad
 \norm{f_w}_1\le 2\binom{n+3}{3}\varphi_0^n.
\]
\item There exists $c>0$ such that for words $w=a^n$, where $n\ge 1$, we have
\[
\norm{f_{a^n}}_1\ge c \varphi^n
\]
and 
\[
\norm{f_{a^n}}_\infty\ge c\varphi^n/\sqrt n.
\]
\end{enumerate}
\end{thm}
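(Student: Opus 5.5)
Part (1) will be proved by induction on the cyclically reduced length, using a weighted coefficient norm. First, since $f_w$ is invariant under conjugation and inversion, we may assume $w$ is cyclically reduced of length $n$. The tool is the Fricke trace identity $\Tr(UV)=\Tr U\,\Tr V-\Tr(UV^{-1})$.

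To control coefficients we do not bound $\norm{f_w}_\infty$ directly. Instead we use a weighted evaluation: for $f\in\Z[x,y,z]$, set $|f|(t_1,t_2,t_3)=\sum|[x^iy^jz^k]f|\,t_1^it_2^jt_3^k$. The plan is to find positive weights $t_1,t_2,t_3$ with $\min(t_1^it_2^jt_3^k)\ge$ something explicit on the relevant monomials, and to prove by induction an estimate of the form
\[
|f_w|(t)\le C\varphi_0^n .
\]
The natural reduction runs as follows.

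\begin{enumerate}
\item For a power $a^k$, the recursion $f_{a^{k+1}}=xf_{a^k}-f_{a^{k-1}}$ gives $|f_{a^k}|(t)\le$ a Chebyshev-type growth rate.
\item For general $w$, split $w=UV$ into two pieces of lengths roughly $n/2$, or peel off one letter. Then $f_w=f_Uf_V-f_{UV^{-1}}$, where $UV^{-1}$ is shorter after cancellation.
\end{enumerate}

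A cleaner route, and the one I will actually pursue, is to write $w(A,B)$ as a product of the matrices $A^{\pm1},B^{\pm1}$. Using Cayley--Hamilton ($A^{-1}=xI-A$, $A^2=xA-I$), every product reduces to a linear combination with polynomial coefficients of $I,A,B,AB$. Multiplying such a combination by a letter is a linear map $M_u$ on a 4-dimensional $\Z[x,y,z]$-module, and the entries of $M_u$ are among $\pm1,\pm x,\pm y,\pm z$. For instance,
\[
BA = zI - yA - xB + AB \quad\text{(using } \Tr(AB)=z \text{ and } BA+AB \text{ relations)} .
\]
Taking coefficient-absolute-value norms with all variables set to weight $1$ gives the $\ell_1$ bound as $\norm{\,|M_{u_1}|\cdots|M_{u_n}|\,}$. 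The growth rate is the spectral radius of the letter matrices with variables replaced by $1$, i.e. absolute values. The constant $\varphi_0=\sqrt{2\varphi^2+1}$ suggests this spectral radius computation, possibly over pairs of letters.

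The $\ell_\infty$ bound should then follow from a sharper per-monomial count, with the $\ell_1$ bound derived from it times the number $\binom{n+3}{3}$ of monomials of degree $\le n$ in three variables.

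\textbf{Part (2).} We have $f_{a^n}(x)=2T_n(x/2)$, the Chebyshev polynomial $\sum_k (-1)^k\frac{n}{n-k}\binom{n-k}{k}x^{n-2k}$. Hence
\[
\norm{f_{a^n}}_1\ge\sum_k\binom{n-k}{k}=F_{n+1}\sim\varphi^{n+1}/\sqrt5 .
\]
Since there are at most $n+1$ terms, $\norm{f_{a^n}}_\infty\ge\norm{f_{a^n}}_1/(n+1)$. For the $\sqrt n$ refinement, use a local limit estimate: the maximal binomial $\binom{n-k}{k}$ is attained near $k\approx n/(\varphi\sqrt5)$ and is $\asymp\varphi^n/\sqrt n$ by Stirling's formula.

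The main obstacle is the upper bound. We must choose the right normalized module and norm so that the per-letter growth constant is exactly $\varphi_0$, including the $\pm$ inverse letters. I would first compute the $4\times4$ absolute letter matrices, then bound the operator norm of products of two consecutive letters by $\varphi_0^2$ (since $\varphi_0^2=2\varphi^2+1$ suggests a Frobenius-type bound on two-step products). Constants like the leading $2$ come from the trace of $I$.
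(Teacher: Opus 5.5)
Your Part (2) is fine and is essentially the paper's argument. You use $c_{n,k}=\frac{n}{n-k}\binom{n-k}{k}\ge\binom{n-k}{k}$ and $\sum_k\binom{n-k}{k}=F_{n+1}\ge c\varphi^n$; the paper gets $\norm{f_{a^n}}_1=L_n$ exactly, but your bound suffices. Stirling at $k\approx n/(\varphi\sqrt5)=\frac{5-\sqrt5}{10}n$ then gives the $\varphi^n/\sqrt n$ bound.

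The gap is in Part (1). The step you call ``the main obstacle'' is the entire content of the upper bound, and the mechanism you sketch does not deliver it.

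\begin{itemize}
\item The correct relation is $BA=(z-xy)I+yA+xB-AB$; your formula fails the trace check. So the letter matrices on the basis $I,A,B,AB$ are not monomial: right multiplication by $A^{\pm1}$ carries the $B$-coefficient into the $I$-coefficient via $\pm(z-xy)$.
\item More seriously, the growth of the absolute products is not governed by the spectral radii of the individual absolute letter matrices, which all equal $\varphi$. For the word $ab$, the two-step absolute matrix has characteristic polynomial $\lambda^4-4\lambda^3-3\lambda^2+1$ and spectral radius $\approx4.64$, i.e.\ about $2.15$ per letter.
\item What you would actually need is a bound on a joint spectral radius over reduced words. It would have to be certified by an explicit norm in which every letter step has norm at most $\varphi_0$ and the trace functional contributes at most the factor $2$.
\item You do not exhibit such a norm, and it is not easy to find: no diagonally weighted $\ell_1$-norm makes all four absolute letter matrices have norm $\le\varphi_0$. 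Nothing in this combinatorial setup singles out $\sqrt{2\varphi^2+1}$. As it stands the method only gives something like $\norm{f_w}_1\le 2\cdot 5^n$ from column sums.
\item Your remark that the $\ell_\infty$ bound ``should follow from a sharper per-monomial count'' has no mechanism behind it.
\end{itemize}

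The missing idea is analytic rather than combinatorial.

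\begin{itemize}
\item For $(x,y,z)$ on the unit torus, take $A=\begin{pmatrix}x&-1\\1&0\end{pmatrix}$ and $B=\begin{pmatrix}0&q\\r&y\end{pmatrix}$ with $q-r=z$ and $qr=-1$. These realize the traces $x,y,z$.
\item Since $r$ and $-q$ are the roots of $u^2+zu+1=0$ with $|z|=1$, one gets $|q|,|r|\le\varphi$.
\item Hence $\|A^{\pm1}\|_F=\sqrt3$ and $\|B^{\pm1}\|_F\le\sqrt{1+2\varphi^2}=\varphi_0$; this is exactly where $\varphi_0$ comes from.
\item Submultiplicativity of the Frobenius norm and $|\Tr M|\le\sqrt2\|M\|_F$ then give $|f_w|\le 2\varphi_0^n$ on the torus, for cyclically reduced $w$ of length $n$.
\item Cauchy's coefficient formula on the torus bounds every coefficient by this supremum, which is the $\ell_\infty$ estimate.
\item The $\ell_1$ estimate follows because $\deg f_w\le n$ leaves at most $\binom{n+3}{3}$ monomials.
\end{itemize}
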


In \cref{thm:fricke-existence,thm:recursive-algorithm} below we recall the standard inductive algorithm for computing $f_w$ in terms of $w$, with integer coefficients recorded in bit-form. In addition, we provide a different algorithm for computing $f_w$, via multiplying matrices with indeterminate coefficients, and use it to obtain a polynomial time bound for computing $f_w$, see \cref{thm:polytime-expanded} below:

\begin{thm}[Polynomial-time expanded computation]\label{thm:intro-polynomial-time}
There is an algorithm which, given a cyclically reduced word $w$ of length $n=||w||_{\mathcal X}$ in $F(a,b)$, outputs the fully expanded polynomial
\[
 f_w(x,y,z)\in\Z[x,y,z]
\]
in deterministic Turing-machine time $O(n^5)$ and space $O(n^4)$. 
In this dense coefficient-table representation, the output itself has $O(n^4)$ bits.
\end{thm}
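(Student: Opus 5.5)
The plan is to compute $f_w$ by multiplying $2\times 2$ matrices whose entries are polynomials. We fix a ``generic'' pair of matrices over $\Z[x,y,z]$ (or over a small extension) whose traces realize the Fricke coordinates. A convenient choice is
\[
A=\begin{pmatrix} x & -1\\ 1 & 0\end{pmatrix},\qquad
B=\begin{pmatrix} 0 & t\\ -t^{-1} & y\end{pmatrix}.
\]
Here $\Tr(AB)=-t^{-1}-t$ is awkward, so instead we take the standard normal form in which $\Tr A=x$, $\Tr B=y$ and $\Tr AB=z$ hold identically:
\[
A=\begin{pmatrix} x & 1\\ -1 & 0\end{pmatrix},\qquad
B=\begin{pmatrix} 0 & -s\\ s^{-1} & y\end{pmatrix},
\]
and we adjust so that the entries are polynomial in $x,y,z$.

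The cleanest route is to use $A=\begin{pmatrix} x&1\\-1&0\end{pmatrix}$ and $B=\begin{pmatrix}0&u\\ v&y\end{pmatrix}$ with $uv=-1$ and $\Tr AB=-v\cdot 1\cdot(-1)\ldots$. This is to be settled in the write-up. If no polynomial normal form exists over $\Z[x,y,z]$ itself, we work over $\Z[x,y,z][\lambda^{\pm1}]$ or over $\Z[x,y,z,\lambda]/(\text{relation})$, so that the entries of $A^{\pm1}$ and $B^{\pm1}$ have degree $O(1)$. Correctness then follows from the Fricke--Klein existence theorem (\cref{thm:fricke-existence}) together with uniqueness of $f_w$. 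The trace of the product of these generic matrices is a polynomial identity, hence equals $f_w(x,y,z)$. Any auxiliary variable must cancel, by uniqueness and Zariski density of the realized triples in $\C^3$.

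Next we run the multiplication left to right. After $k$ steps the partial product $M_k=u_1\cdots u_k$ (matrices) has four entries. Each entry is a polynomial of total degree at most $O(k)$ in the three (or four) variables, so it has $O(k^3)$ monomials. By the bound of \cref{thm:intro-coeff-growth}(1), or more directly by the trivial bound $|\text{coeff}|\le C^k$ for multiplication by a constant-size matrix with bounded coefficients, each coefficient has $O(k)$ bits. Multiplying $M_k$ by a fixed generator matrix with $O(1)$ monomial entries costs $O(k^3)$ coefficient additions of $O(k)$-bit integers, i.e. $O(k^4)$ bit operations, with dense arrays indexed by exponent triples. Summing over $k\le n$ gives $O(n^5)$ time. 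Storing the dense table uses $O(n^3)$ entries of $O(n)$ bits, i.e. $O(n^4)$ space. The final trace is $O(n^4)$ bits, which matches the stated output size. The $O(1)$ cost of using inverses $A^{-1}=\operatorname{adj}A$ is absorbed, since $\det=1$.

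I expect the main obstacle to be the choice of generic matrices with polynomial entries and bounded degree, while also ensuring that the Turing-machine overhead of dense indexing and of addressing into arrays stays within the bound. For the first point I would try $A=\begin{pmatrix}x&1\\-1&0\end{pmatrix}$ and $B=\begin{pmatrix}0&\lambda\\-\lambda^{-1}&y\end{pmatrix}$, for which $\Tr AB=-\lambda^{-1}-\lambda$. This is not $z$, so it fails. The fallback is to use $B=\begin{pmatrix} y-\beta & \ast\\ \ast&\beta\end{pmatrix}$ with a lower-triangular $A$, or simply to work with the trace ring via an auxiliary variable and then reduce modulo $\lambda^2+z\lambda+1$. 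Because this is a monic relation, coefficients in the basis $\{1,\lambda\}$ remain bounded-degree polynomials in $x,y,z$, and the trace at the end lies in $\Z[x,y,z]$. Reduction modulo the monic quadratic costs only a constant factor per step, so the $O(n^5)$ and $O(n^4)$ bounds survive. The Turing-machine overhead is handled by processing the arrays sequentially in lexicographic order of exponents, so that each step is a constant number of linear passes over the tapes.
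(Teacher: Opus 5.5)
Your proposal is correct and follows the paper's route. The pair you discarded, $A=\begin{pmatrix}x&1\\-1&0\end{pmatrix}$ and $B=\begin{pmatrix}0&\lambda\\-\lambda^{-1}&y\end{pmatrix}$, actually works once you impose your fallback relation $\lambda^2+z\lambda+1=0$: then $\lambda^{-1}=-\lambda-z$ is polynomial and $\Tr AB=-(\lambda+\lambda^{-1})=z$. After conjugating by $\mathrm{diag}(1,-1)$ and setting $\lambda=-\zeta$, this is exactly the paper's pair $\mathfrak A,\mathfrak B$ over $R=\Z[x,y,z,\zeta]/(\zeta^2-z\zeta+1)$, and your remaining steps (specializing the integral trace identity, dense $O(n^3)$-entry arrays with $O(n)$-bit coefficients, $O(n^4)$ bit operations per letter, hence $O(n^5)$ time and $O(n^4)$ space) match the paper's proof.
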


We next state the support and sparse-output lower bounds.  The coefficient bit-size notation $\norm{\cdot}_{\bit,1}$ is defined in the \hyperref[not:coefficient-bit-norms]{notation paragraph above} and is used throughout the later output-size estimates; it denotes the total binary bit-size of the nonzero coefficients in the expanded sparse form.  For a complex polynomial $f$ we denote by $\supp(f)$ the set of monomials that occur with nonzero coefficients in $f$.  For a variable $t$ and an integer $m\ge 1$ denote by $E_m(t)=U_{m-1}(t/2)$, where $U_j$ is the Chebyshev polynomial of the second kind.  As observed in Section~\ref{sec:support-lower}, $E_m(t)$ is an integer polynomial of degree $m-1$ with $\#\supp(E_m(t))= \left\lceil\frac{m}{2}\right\rceil$.
The following deterministic family is proved using a single explicit coefficient slice; it gives both quadratic support growth and a cubic sparse-output lower bound.

\begin{thm}\label{thm:intro-support-lower}
Consider the words 
\[
 w_m=a^m b^m (ab)^m,
\]
in $F(a,b)$ for all $m\ge 1$, so that $|w_m|_{\mathcal X}=||w_m||_{\mathcal X}=4m$. Then:
\begin{enumerate}
\item The $\Z[x,y]$-coefficient $g_m(x,y)$ of $z^{m+1}$ in $f_{w_m}$ is equal to
\[
 E_m(x)E_m(y),
\]
\item We have
\[
 \#\supp(f_{w_m})=\Omega(m^2).
\]
\item The binary bit-size $\norm{g_m}_{\bit,1}$ of $g_m(x,y)$ is $\Omega(m^3)$.  In particular,
\[
 \norm{f_{w_m}}_{\bit,1}=\Omega(m^3).
\]
\item Any algorithm which outputs the trace polynomial $f_w$ in fully expanded sparse binary form has worst-case output size, and therefore worst-case running time, at least $\Omega(n^3)$ for words of length $n$.
\end{enumerate}
\end{thm}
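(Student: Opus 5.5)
The plan is to compute $\Tr(A^mB^m(AB)^m)$ directly from the Cayley--Hamilton recursion, without going through $f_w$ by any general machinery. For $M\in\SL(2,\C)$ with $t=\Tr M$, Cayley--Hamilton gives $M^2=tM-I$, and induction on $m$ gives
\[
M^m=E_m(t)\,M-E_{m-1}(t)\,I,\qquad E_m(t)=U_{m-1}(t/2),\quad E_0=0,\ E_1=1.
\]
I will apply this to $M=A$, $M=B$ and $M=AB$, with traces $x$, $y$, $z$ respectively. Then I will expand $A^mB^m(AB)^m$ as a product of three binomials and take traces term by term. This uses the elementary identities $\Tr(ABAB)=z^2-2$, $\Tr(A\cdot AB)=xz-y$ and $\Tr(B\cdot AB)=\Tr(AB^2)=yz-x$, together with $\Tr(AB)=z$, $\Tr A=x$, $\Tr B=y$ and $\Tr I=2$. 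The result is an explicit identity in $\Z[x,y,z]$ with eight terms, and uniqueness of $f_w$ identifies it with $f_{w_m}$.

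\emph{Step 1: the $z^{m+1}$ slice.} The polynomial $E_m(z)$ is monic of degree $m-1$, and $E_{m-1}(z)$ has degree $m-2$. I will check the $z$-degree of each of the eight terms:
\begin{itemize}
\item $E_m(x)E_m(y)E_m(z)(z^2-2)$ has $z$-degree $m+1$, with top coefficient $E_m(x)E_m(y)$;
\item $E_m(z)(xz-y)$ and $E_m(z)(yz-x)$ have $z$-degree $m$;
\item $E_{m-1}(z)\,z$ has $z$-degree $m-1$;
\item all remaining terms have $z$-degree at most $m-1$.
\end{itemize}
Hence $g_m=E_m(x)E_m(y)$, which is part (1). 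I expect no difficulty here beyond bookkeeping the eight terms.

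\emph{Step 2: support and bit-size.} I will use the explicit expansion
\[
E_m(t)=\sum_{k\ge0}(-1)^k\binom{m-1-k}{k}t^{m-1-2k}.
\]
Its nonzero coefficients number exactly $\lceil m/2\rceil$, and distinct pairs $(k,l)$ give distinct monomials $x^{m-1-2k}y^{m-1-2l}z^{m+1}$. So $\#\supp(f_{w_m})\ge\lceil m/2\rceil^2=\Omega(m^2)$, which is part (2).

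For part (3), the coefficient attached to the pair $(k,l)$ is $\pm\binom{m-1-k}{k}\binom{m-1-l}{l}$. Restrict to $m/8\le k,l\le m/4$ with $m$ large; this range contains $\Omega(m^2)$ pairs. Then $N=m-1-k\ge 2k$, so the bound $\binom{N}{k}\ge (N/k)^k$ gives $\binom{m-1-k}{k}\ge 2^{k}\ge 2^{m/8}$. Each such coefficient therefore has bit-length at least $m/4$. Summing over the $\Omega(m^2)$ pairs gives $\norm{g_m}_{\bit,1}=\Omega(m^3)$. Since these are genuine coefficients of $f_{w_m}$, the same bound holds for $\norm{f_{w_m}}_{\bit,1}$. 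The only mildly delicate point in the whole proof is this choice of a coefficient range where the binomials are uniformly exponentially large.

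\emph{Step 3: consequence for algorithms.} With $n=4m$, any algorithm that outputs $f_{w_m}$ in expanded sparse binary form must write all these nonzero coefficients. Its output size, and hence its running time, is therefore $\Omega(m^3)=\Omega(n^3)$. For lengths $n$ not divisible by $4$, I will either pad the word or simply state the bound along the subsequence $n=4m$, which suffices for a worst-case lower bound.
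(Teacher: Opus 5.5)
Your proposal is correct and follows essentially the same route as the paper. You apply the Cayley--Hamilton expansion $M^m=E_m(\Tr M)M-E_{m-1}(\Tr M)I$ to $A$, $B$ and $AB$ to isolate the $z^{m+1}$ slice $E_m(x)E_m(y)$, then count the $\lceil m/2\rceil^2$ monomials and the $\Omega(m^2)$ coefficient pairs of bit-length $\Omega(m)$ in a linear index range. There is one harmless bookkeeping slip: for $m\ge 2$ the term $E_{m-1}(x)E_{m-1}(y)E_m(z)\Tr(AB)=E_{m-1}(x)E_{m-1}(y)E_m(z)\,z$ has $z$-degree $m$, not at most $m-1$, but it still does not reach $z^{m+1}$.
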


The deterministic example is complemented by the following exponentially generic lower bounds for random positive, random freely reduced, and uniformly random cyclically reduced words, proved in \cref{thm:random-positive-support,thm:random-freely-reduced-support,cor:random-expanded-output-lower}.

\begin{thm}[Quadratic random support, cubic-size random coefficients, and cubic sparse-output lower bounds]\label{thm:intro-random-support-output}
Let $W_n$ be a random word of length $n$, either in the positive model on $\{a,b\}$ with $\Pr(a)=p\in(0,1)$, or in the standard nonbacktracking freely reduced model on $\{a,a^{-1},b,b^{-1}\}$.  Then the following hold.
\begin{enumerate}
\item There are constants $c,C,\lambda>0$, depending on the model and on $p$ in the positive case, such that
\[
 \Pr\bigl(\#\supp(f_{W_n})\ge c n^2\bigr)\ge 1-Ce^{-\lambda n}.
\]
\item There are constants $c_1,c_2,C,\lambda>0$, depending on the model and on $p$ in the positive case, such that
\[
 \Pr\left(\exists i,j,k\text{ with } c_1n^3\le
 |[x^iy^jz^k]f_{W_n}|\le c_2n^3\right)\ge 1-Ce^{-\lambda n}.
\]
\item There are constants $c_3,C_3,\lambda_3>0$, depending on the model and on $p$ in the positive case, such that
\[
 \Pr\bigl(\norm{f_{W_n}}_{\bit,1}\ge c_3 n^3\bigr)\ge 1-C_3e^{-\lambda_3 n}.
\]
Thus the total binary bit-size of the nonzero coefficients in the fully expanded sparse output grows at least cubically with exponentially high probability.
\item Fix one of these two random models.  There are constants $c_4,C_4,\lambda_4>0$, depending on the model, on $p$ in the positive case, and on the precise sparse-output convention, such that for every deterministic algorithm $\mathcal A$ which computes and outputs $f_w$ in fully expanded sparse binary form on input words $w$ of length $n$ in the same model, if $T_{\mathcal A}(W_n)$ denotes the running time of $\mathcal A$ on input $W_n$, then
\[
 \Pr\bigl(T_{\mathcal A}(W_n)\ge c_4 n^3\bigr)\ge 1-C_4e^{-\lambda_4 n}
\]
for all $n\ge1$.
\item There are constants $c_5,C_5,\lambda_5>0$, depending on the precise sparse-output convention, such that for every deterministic algorithm $\mathcal A$ which computes and outputs $f_w$ in fully expanded sparse binary form on input cyclically reduced words $w$ of length $n$, if $U_n$ is uniformly random among cyclically reduced words of length $n$ and $T_{\mathcal A}(U_n)$ denotes the running time of $\mathcal A$ on input $U_n$, then
\[
 \Pr\bigl(T_{\mathcal A}(U_n)\ge c_5 n^3\bigr)\ge 1-C_5e^{-\lambda_5 n}
\]
for all $n\ge1$.
\end{enumerate}
In particular, every such algorithm has an $\Omega(n^3)$ running-time lower bound with exponentially high probability in the positive, freely reduced, and uniformly cyclically reduced models.
\end{thm}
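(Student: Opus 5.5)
The plan is to reduce everything to one explicit coefficient slice that occurs with high probability, in the spirit of \cref{thm:intro-support-lower}. In both models, a random $W_n$ contains, with probability $1-Ce^{-\lambda n}$, a linear number of disjoint occurrences of some fixed short pattern. For example, in the positive model we count the maximal runs $a^{\alpha}$ with $\alpha\ge 2$. This follows from a Chernoff or Azuma bound for the empirical frequencies of length-$k$ subwords of the Markov chain, which also gives $n-R=\Theta(n)$ by \cref{thm:intro-degree-bounds}(1). We then isolate the coefficient of $z^{R'}$ for a suitable $R'$ slightly below the top value $R$ from \cref{thm:intro-degree-bounds}. The expectation is that this coefficient, as an element of $\Z[x,y]$, is computable exactly.

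For the exact computation, write $w$ cyclically as a product of syllables and use the Chebyshev expansion $A^m=E_m(x)A-E_{m-1}(x)I$, and similarly for $B$. Expanding $\Tr$ of the product gives a sum over choices, one for each syllable, of either the ``$A$'' term or the ``$I$'' term. Each choice contributes the product of $E$-polynomials times the trace of a word in $A,B$ with fewer letters. The top $z$-degree arises only from the all-$A/B$ choice, as in the proof of \cref{prop:top-homogeneous-exact-degree}. The coefficient one step below comes from choices where one syllable (or a bounded number) is replaced by its $-E_{m-1}I$ term.

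The resulting slice should be a sum of $\Theta(n)$ terms of the form $\pm\prod_i E_{m_i}(x)\prod_j E_{n_j}(y)$ times monomials. From this we extract a monomial whose coefficient is a sum of about $cn$ contributions, and which, after a second differentiation in $z$, grows like $n^3$. This is the source of the $c_1n^3\le|\cdot|\le c_2n^3$ window in (2). The upper bound $c_2n^3$ comes from counting at most $O(n^3)$ triples of syllable positions, each contributing $O(1)$ after we restrict to a monomial in which every $E$-factor is evaluated at its degree-$0$ or degree-$1$ coefficient.

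\textbf{Support bound.} For (1), fix the slice $z^{R-1}$. Its $x$- and $y$-degrees range over a set of size $\Theta(n)$ each, because one can choose which syllable is dropped and which lower-order Chebyshev monomials are taken. A lower-triangularity argument then shows that $\Omega(n^2)$ of the pairs $(i,j)$ have nonzero coefficients. The argument orders monomials lexicographically and observes that the extremal contribution is unique, so no cancellation can occur.

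\textbf{Bit-size bound.} For (3), the slice contains $\Omega(n^2)$ coefficients of absolute value $\ge n^{c}$, or more robustly binomial-type coefficients of size $e^{\Omega(n)}$, obtained as products of Chebyshev coefficients $\binom{m-1-k}{k}$ with $k$ proportional to $m$. Each of these has bit-length $\Omega(n)$, which gives $\Omega(n^3)$. This is exactly parallel to part (3) of \cref{thm:intro-support-lower}.

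\textbf{Running time.} (4) follows immediately, since the running time is at least the output length. For (5), a uniformly random cyclically reduced word is obtained from the nonbacktracking chain by conditioning on an event of probability bounded below (the word being cyclically reduced). Exponential tail bounds survive conditioning, up to constants.

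\textbf{Main obstacle.} The main difficulty is controlling cancellations among the many choices contributing to a single monomial of the slice, given that random words have irregular syllable lengths. I would handle this by choosing extremal monomials, with the maximal $x$-degree inside the slice, where only choices of a rigid combinatorial type contribute and all signs agree by the sign formula in \cref{thm:intro-degree-bounds}(1).
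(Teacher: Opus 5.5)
There is a genuine gap, and it comes from conflating two different notions of ``top''. The monomial $(-1)^{N_{ab}+N_{ba}}x^{m_a-R}y^{m_b-R}z^R$ in \cref{thm:intro-degree-bounds}(1) is the top \emph{homogeneous} part of $f_w$, not its top $z$-slice. Write $w$ cyclically as $a^{\alpha_1}b^{\beta_1}\cdots a^{\alpha_s}b^{\beta_s}$. Then the largest power of $z$ in $f_w$ is $z^s$, and $s\ge R$, often strictly. For example, $f_{ab^{-1}}=xy-z$ has $R=0$ and $s=1$; in the nonbacktracking model $s\approx n/3$ while $R\approx n/6$. Your own Chebyshev expansion essentially already gives the fact the paper's proof rests on. Only the all-$A/B$ choice reaches $z$-degree $s$, so
\[
 g_s:=[z^s]f_w=\pm\prod_{i}E_{|\alpha_i|}(x)\prod_iE_{|\beta_i|}(y).
\]
The paper obtains this from Horowitz's triangular specialization, \cref{lem:horowitz-specialization-precise,thm:general-syllable-support-lower}. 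This slice is far from a single monomial. Moreover, $E_m(t)=t^{m-1}Q_m(t^{-2})$, and the coefficients of $Q_m$ have the fixed sign pattern $(-1)^r$, so products of the $Q_m$ involve no cancellation at all.

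By discarding this slice and descending to $z^{R'}$ with $R'$ just below $R$, you create exactly the cancellation problem you list as the main obstacle. In the freely reduced model that slice lies in the middle of the $z$-range, where exponentially many expansion choices contribute. Your ``extremal monomial'' remedy is not carried out, and the sign formula of \cref{thm:intro-degree-bounds}(1) controls only the single top-degree monomial.

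The mechanisms you propose for (2) and (3) also fail as stated.
\begin{itemize}
\item For (2), differentiating in $z$ does not produce coefficients of $f_w$; the cubic coefficient must be a specific one. In \cref{thm:syllable-cubic-coefficient} it is $[x^{D_a-6}y^{D_b}z^s]f_w=\pm[u^3]\prod_iQ_{|\alpha_i|}(u)$. This lies between $\binom{L_a}{3}$ and $O(n^3)$, where $L_a$ is the number of $a$-syllables of length at least $3$. The triple contributions are not $O(1)$, since $|[u]Q_m|=m-2$, but they sum to $O((\sum_i|\alpha_i|)^3)$.
\item For (3), with high probability all syllables of a random word have length $O(\log n)$. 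So no individual Chebyshev coefficient $\binom{m-1-k}{k}$ has size $e^{\Omega(n)}$, and the analogy with $a^mb^m(ab)^m$ breaks down. The exponential size instead comes from summing over choices in the product, $|[u^r]\prod_iQ_{|\alpha_i|}(u)|\ge\binom{L_a}{r}$. This gives $\norm{g_s}_{\bit,1}\ge(\kappa_b+1)\sum_r\bit\binom{L_a}{r}=\Omega(n^3)$ (\cref{thm:syllable-cubic-bit-size}).
\end{itemize}

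Consequently, the probabilistic input you need is linearly many $a$-syllables \emph{and} $b$-syllables of length at least $3$. Runs of length $2$ contribute $E_2(t)=t$, a single monomial, so your runs with $\alpha\ge2$ are not enough. The paper gets this from disjoint blocks $baaab$ and $abbba$ (and their signed analogues) via \cref{lem:adapted-chernoff}. In the freely reduced model it also uses $\Pr(K_n\ge\eta n)\le3^{1-\eta n}$ from \cref{lem:cyclic-cancellation-random}, so that cyclic reduction destroys only a few blocks; you omit this step. Your arguments for (4) and (5) are fine and match the paper: output size bounds running time, and you condition on being cyclically reduced, an event of probability $\tfrac34+O(3^{-n})$.
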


Although \cref{thm:intro-random-support-output} formally implies the existence of words with $\norm{f_w}_{\bit,1}=\Omega(n^3)$, \cref{thm:intro-support-lower} is included for a distinct reason: it gives an explicit deterministic sequence of words with cubic total coefficient bit-size growth.  Thus \cref{thm:intro-support-lower} supplies a concrete worst-case construction, while \cref{thm:intro-random-support-output} gives a probabilistic, exponentially generic lower bound. Informally, in the terminology of \cite{KMSS2003}, these results say that the exponentially generic-case running-time lower bound for the fully expanded sparse binary output problem of computing $f_w$ is of cubic order for random positive, random freely reduced, and uniformly random cyclically reduced words in $F(a,b)$.

Recall that elements $v,w\in F(a,b)$ are called \emph{$\SL(2,\C)$-character equivalent} or just \emph{character equivalent}, denoted $v\equiv_c w$, if for every $A,B\in \SL(2,\C)$ we have $\Tr v(A,B)=\Tr w(A,B)$. It is easy to see that for $v,w\in F(a,b)$ we have $v\equiv_c w$ if and only if $f_v=f_w$. Character equivalence is closely related to the study of geometry of curves on surfaces, e.g. see \cite{Lein2003}, as well as to translation equivalence in free groups~\cite{KapovichLevittSchuppShpilrain2007}.

As a direct algorithmic consequence of \cref{thm:intro-polynomial-time} we also obtain the following
decision result, see \cref{cor:polytime-character-equivalence}:

\begin{cor}\label{cor:intro-character-equivalence-decision}
There is a deterministic algorithm which, given two freely reduced words
$v,w\in F(a,b)$ of length at most $n$, decides whether or not
$v\equiv_c w$, that is, whether or not $v$ and $w$ are
$\SL(2,\C)$-character equivalent, in time $O(n^5)$ under the dense-output
Turing-machine model used in \cref{thm:intro-polynomial-time}.
\end{cor}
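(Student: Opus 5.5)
The plan is to reduce the decision problem to computing and comparing two trace polynomials, using \cref{thm:intro-polynomial-time} together with the uniqueness of $f_w$. Given freely reduced words $v,w$ of length at most $n$, I first cyclically reduce them. Repeatedly cancelling a first letter against an inverse last letter takes $O(n)$ time on a Turing machine; for instance, compare from both ends inward and then copy the middle segment. This produces cyclically reduced words $v',w'$ conjugate to $v,w$, of lengths $n_v,n_w\le n$. Since the trace polynomial is invariant under conjugation, $f_v=f_{v'}$ and $f_w=f_{w'}$. The trivial word needs separate handling: if $v'$ is empty, then $f_{v'}=2$, which is a constant table of $O(1)$ size.

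Next, I run the algorithm of \cref{thm:intro-polynomial-time} on $v'$ and on $w'$. This takes time $O(n_v^5)+O(n_w^5)=O(n^5)$ and space $O(n^4)$. The output consists of dense coefficient tables of $f_{v'}$ and $f_{w'}$, each of $O(n^4)$ bits.

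To compare the two tables I need a canonical dense format. The natural choice is the coefficients $[x^iy^jz^k]f$ indexed by all $(i,j,k)$ with $i+j+k\le n$, in a fixed lexicographic order, each coefficient written in a fixed-width binary field. The degrees are at most $n$ by \cref{thm:intro-degree-bounds}, and the bit-lengths are $O(n)$ by \cref{thm:intro-coeff-growth}(1), since $\norm{f}_\infty\le 2\varphi_0^n$. If the two outputs use different length parameters, I pad the shorter table with zeros up to the common bound $n$. Re-encoding into this format takes time linear in the table size, so $O(n^4)$. Two such tables can then be compared cell by cell in time $O(n^4)$ on a multitape Turing machine, by scanning both tapes in parallel.

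Finally, by the equivalence $v\equiv_c w\iff f_v=f_w$ noted before the corollary, and by uniqueness of $f_w$ \cite{Horowitz1972}, the algorithm answers ``yes'' exactly when the two tables agree. The total running time is $O(n)+O(n^5)+O(n^4)=O(n^5)$.

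The main obstacle is ensuring that the dense representation is canonical and directly comparable. I would handle this by fixing the index range and the coefficient width a priori, as functions of $n$ alone, using the degree and coefficient bounds quoted above.
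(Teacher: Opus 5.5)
Your proposal is correct and follows essentially the same route as the paper: compute both dense coefficient tables via \cref{thm:polytime-expanded}, compare them entry by entry in $O(n^4)$ time using the $O(n)$-bit coefficient bound, and conclude via $v\equiv_c w\iff f_v=f_w$ (\cref{lem:character-equivalence-fricke}). Your extra cyclic-reduction and fixed-width padding steps are harmless bookkeeping; the paper skips the cyclic reduction because \cref{thm:polytime-expanded} already accepts arbitrary words of length $n$, not only cyclically reduced ones.
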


We include a list of open problems in Section~\ref{sec:open-problems}.
\bigskip

{\bf Acknowledgements.} The author is grateful to Alex Wright for suggesting improvements to the results of the original version of this paper.

\bigskip

\section{The trace polynomial and the basic trace identities}

Let $F_2=F(a,b)$ be the free group on two generators.  The existence and basic properties of trace polynomials rely on the following classic trace identities for $\SL(2,\C)$, see, e.g. \cite{Horowitz1972}:

\begin{prop}\label{prop:basic}
The following hold:
\begin{enumerate}
\item[(a)] For any $U,V\in \SL(2,\C)$ we have 
\begin{equation}\label{eq:basic-trace-identity}
 \Tr(UV)+\Tr(UV^{-1})=\Tr(U)\Tr(V)
\end{equation}
\item[(b)] 
For any $U\in \SL(2,\C)$ we have 
\begin{equation}\label{eq:basic-trace-identity2}
 \Tr(U)=\Tr(U^{-1})
\end{equation}
\item[(c)]
For any $U,V\in \SL(2,\C)$ we have 
\begin{equation}\label{eq:basic-trace-identity3}
 \Tr(VUV^{-1})=\Tr(U)
\end{equation}
\end{enumerate}
\end{prop}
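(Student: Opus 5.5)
The three identities in \cref{prop:basic} are all elementary consequences of the Cayley--Hamilton theorem for $2\times 2$ matrices of determinant one. I will derive each one directly; no earlier result of the paper is needed.

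For (b), write $U=\begin{pmatrix} p & q\\ r & s\end{pmatrix}$ with $ps-qr=1$. Then
\[
 U^{-1}=\begin{pmatrix} s & -q\\ -r & p\end{pmatrix},
\]
so $\Tr(U^{-1})=s+p=\Tr(U)$.

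For (a), the characteristic polynomial of $V\in\SL(2,\C)$ is $t^2-\Tr(V)\,t+1$. By Cayley--Hamilton,
\[
 V^2-\Tr(V)\,V+I=0 .
\]
Multiplying on the right by $V^{-1}$ gives the matrix identity
\[
 V+V^{-1}=\Tr(V)\,I .
\]
Left-multiply this by $U$ to get $UV+UV^{-1}=\Tr(V)\,U$. Taking traces and using linearity of the trace yields
\[
 \Tr(UV)+\Tr(UV^{-1})=\Tr(U)\Tr(V),
\]
which is \eqref{eq:basic-trace-identity}.

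For (c), use the cyclic property $\Tr(XY)=\Tr(YX)$, which holds for any square matrices. With $X=VU$ and $Y=V^{-1}$,
\[
 \Tr(VUV^{-1})=\Tr(V^{-1}VU)=\Tr(U).
\]

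There is no genuine obstacle in this argument. The only point needing care is in (a): the relation $V+V^{-1}=\Tr(V)I$ relies on $\det V=1$, since the constant term of the characteristic polynomial is $\det V$. I will therefore state this hypothesis explicitly when invoking Cayley--Hamilton.
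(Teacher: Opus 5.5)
Your proof is correct in all three parts. The paper does not prove this proposition itself; it cites the identities as classical, referring to Horowitz. Your Cayley--Hamilton argument is the standard one, and it is the same tool the paper later uses in the form $M^2-(\Tr M)M+I=0$ in \cref{prop:basicP}(3) and \cref{lem:chebyshev-power}.

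One small economy: taking the trace of your matrix identity $V+V^{-1}=\Tr(V)\,I$ gives $\Tr V+\Tr V^{-1}=2\Tr V$. So (b) follows from the same identity as (a), without the separate entrywise computation.
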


Horowitz~\cite{Horowitz1972} used the above facts to formally introduce trace polynomials.  The following standard formulation goes back to Fricke--Klein and Horowitz~\cite{FrickeKlein,Horowitz1972}.

\begin{pd}[Fricke trace polynomial]\label{thm:fricke-existence}
For every word $w\in F(a,b)$ there exists a unique polynomial
\[
 f_w(x,y,z)\in \Z[x,y,z]
\]
such that
\[
 \Tr(w(A,B))=f_w(\Tr A,\Tr B,\Tr(AB))
\]
for all $A,B\in \SL(2,\C)$.
\end{pd}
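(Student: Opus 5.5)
I will prove existence and uniqueness separately: existence by induction on word length, using the trace identities of \cref{prop:basic}, and uniqueness by a density argument for the trace map.

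\textbf{Existence.} By \eqref{eq:basic-trace-identity2} and \eqref{eq:basic-trace-identity3}, $\Tr(w(A,B))$ depends only on the conjugacy class of $w$ up to inversion. So I may assume $w$ is cyclically reduced, and set $f_1=2$. I induct on $n=\|w\|_{\mathcal X}$, with $f_a=x$, $f_b=y$, $f_{ab}=z$, and
\[
f_{a^{-1}b}=f_{ab^{-1}}=xy-z
\]
from \eqref{eq:basic-trace-identity}. For the inductive step, suppose the cyclic word $w$ contains some letter $r$ at least twice with the same exponent sign. Then, after conjugating, $w=rUrV$. Put $P=rU$ and $Q=rV$, so $w=PQ$. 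Applying \eqref{eq:basic-trace-identity} gives
\[
\Tr(PQ)=\Tr P\,\Tr Q-\Tr(PQ^{-1}),
\]
where $PQ^{-1}=rUV^{-1}r^{-1}$ is conjugate to $UV^{-1}$. All three words $P$, $Q$, $UV^{-1}$ have reduced length strictly less than $n$, so the inductive hypothesis applies and $f_w=f_Pf_Q-f_{UV^{-1}}$ lies in $\Z[x,y,z]$.

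If no such repetition exists, then each of $a$ and $b$ occurs with only one exponent sign. The letter $a$ must then occur in $w$ either not at all or as a single syllable $a^{\pm k}$, and likewise for $b$, so $w$ is a pure power or is conjugate to $a^{\epsilon k}b^{\delta l}$. If both exponents are at least $2$, this word has $a$ repeated with the same sign, which contradicts the assumption. So the remaining cases are
\[
a^{\pm1}b^{\pm1},\qquad a^{\pm k},\qquad b^{\pm k},\qquad a^{\pm k}b^{\pm1},\qquad a^{\pm1}b^{\pm k}.
\]
Every case with a repeated letter is already covered by the repetition step, since a same-sign repeat exists there. For example, $a^k$ with $k\ge2$ reduces through the Chebyshev recursion $\Tr(A^k)=x\Tr(A^{k-1})-\Tr(A^{k-2})$. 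So the only genuinely new base cases are the length-$2$ words, and these were handled above.

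\textbf{Uniqueness.} Suppose $g\in\Z[x,y,z]$ vanishes on all triples $(\Tr A,\Tr B,\Tr AB)$. It suffices to show that the map $(A,B)\mapsto(\Tr A,\Tr B,\Tr AB)$ from $\SL(2,\C)^2$ to $\C^3$ is surjective, or at least has Zariski-dense image. Then $g$ vanishes on all of $\C^3$, hence $g=0$. For surjectivity, given $(x,y,z)$, take
\[
A=\begin{pmatrix}x&1\\-1&0\end{pmatrix},\qquad B=\begin{pmatrix}0&-t\\t^{-1}&y\end{pmatrix},
\]
or a similar explicit family. One then checks that $\Tr AB$ can be prescribed: here $\Tr AB=t^{-1}+t$, up to the form of the entries, so choosing a slightly more flexible parametrization of $B$ with a free off-diagonal parameter makes $\Tr AB$ an affine function of that parameter.

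\textbf{Main obstacle.} Verifying this explicit parametrization is the only computation needed. Concretely, I will take $B=\begin{pmatrix}0&s\\-s^{-1}&y\end{pmatrix}$, compute $\Tr AB=-s^{-1}-s$, and adjust to a matrix where a free entry enters $\Tr AB$ linearly. A choice that should work is
\[
A=\begin{pmatrix}x&1\\-1&0\end{pmatrix},\qquad B=\begin{pmatrix}0&\zeta\\-\zeta^{-1}&y\end{pmatrix}.
\]
Failing that, I will use a lower-triangular $B$, for which the $(2,1)$ entry contributes linearly to $\Tr AB$.
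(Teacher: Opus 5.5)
Your existence argument has a gap in the case where no letter is repeated. The inference ``if no letter occurs twice with the same sign, then each of $a$ and $b$ occurs with only one exponent sign'' is false. The hypothesis only says that each of the four letters $a,a^{-1},b,b^{-1}$ occurs at most once, and this still allows $a$ and $a^{-1}$, and $b$ and $b^{-1}$, to occur together.

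The cyclically reduced words $aba^{-1}b^{-1}$ and $ab^{-1}a^{-1}b$ (and their inverses and rotations) have length $4$ and contain no repeated letter. They are not among your base cases, and your repetition step cannot reach them, so your induction never produces $f_{[a,b]}$.

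A better choice of split will not fix this. Suppose $w$ has no repeated letter and a rotation of $w$ is written as $UV$ with $U,V$ complementary subwords. Then the last letters of $U$ and $V$ differ, and so do their first letters. Hence $UV^{-1}$ is again cyclically reduced of length $|w|$, and the identity $\Tr(UV)=\Tr U\,\Tr V-\Tr(UV^{-1})$ gives no length decrease. This is why the paper, in the proof of \cref{thm:recursive-algorithm}, isolates length four as the exceptional case and uses the commutator formula of \cref{prop:basicP}(4),(5).

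The repair is short. Take $U=ab$ and $V=ba$. Then $UV^{-1}=[a,b]$ and $UV=ab^2a$, so $\Tr[A,B]=z^2-\Tr(A^2B^2)$. The word $a^2b^2$ has the same length $4$, but it does contain a repeated letter. So at each length, handle the words with a repetition first and the commutators afterwards. Alternatively, compute $\Tr(A^2B^2)=xyz-x^2-y^2+2$ directly from Cayley--Hamilton, as the paper does. Either way you get $f_{[a,b]}=x^2+y^2+z^2-xyz-2$. The case $[a,b^{-1}]$ then follows by applying this to the pair $(A,B^{-1})$, whose trace triple is $(x,y,xy-z)$.

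Apart from this case, your existence proof is the paper's recursion \eqref{eq:main-recursion}. Your uniqueness argument is the surjectivity argument the paper gives in the proof of \cref{lem:character-equivalence-fricke}. Your hesitation there is unnecessary: your very first family already works, because $t+t^{-1}=z$ has a solution $t\neq 0$ for every $z\in\C$ (the two roots of $t^2-zt+1$ have product $1$). The paper makes the equivalent choice of off-diagonal entries $q,r$ with $q-r=z$ and $qr=-1$.
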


We will revisit \cref{thm:fricke-existence} below when discussing an algorithm for computing $f_w$.

We also need to record some basic properties and examples of trace polynomials.

\begin{prop}\label{prop:basicP}
The following hold:

\begin{enumerate}
\item For any $u,w\in F(a,b)$ we have
\[
f_w=f_{uwu^{-1}}=f_{w^{-1}}
\]
\item We have
\[
f_a=x, \quad f_b=y, \text{  and  } f_{ab}=z.
\]
\item We have
\[
 f_{a^2}=x^2-2,
 \qquad
 f_{b^2}(x,y,z)=y^2-2, \quad \text{ and } f_{ab^{-1}}(x,y,z)=xy-z
\]

\item For $w=[a,b]=aba^{-1}b^{-1}$ we have
\[
 f_{[a,b]}=x^2+y^2+z^2-xyz-2.
\]
\item For $w=[a,b^{-1}]=ab^{-1}a^{-1}b$ we have
\[
 f_{[a,b^{-1}]}=x^2+y^2+z^2-xyz-2.
\]
\end{enumerate}
\end{prop}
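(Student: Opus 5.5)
The plan is to use only \cref{prop:basic} and the uniqueness part of \cref{thm:fricke-existence}. Uniqueness reduces every claimed identity $f_v=g$ to checking that $\Tr(v(A,B))=g(\Tr A,\Tr B,\Tr AB)$ holds for all $A,B\in\SL(2,\C)$.

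For (1), conjugation invariance (c) gives $\Tr(uwu^{-1})(A,B)=\Tr(w(A,B))$, and (b) gives $\Tr(w^{-1}(A,B))=\Tr(w(A,B))$. Since $(uwu^{-1})(A,B)=u(A,B)w(A,B)u(A,B)^{-1}$ and $w^{-1}(A,B)=w(A,B)^{-1}$, the polynomials $f_w$, $f_{uwu^{-1}}$ and $f_{w^{-1}}$ all represent the same trace function. Uniqueness then gives equality. Item (2) is immediate from the definitions of $x,y,z$ and uniqueness.

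For (3), apply (a) with $U=V=A$. This gives $\Tr(A^2)+\Tr(I)=(\Tr A)^2$, so $f_{a^2}=x^2-2$, and the case $f_{b^2}$ is symmetric. With $U=A$, $V=B$, (a) reads $\Tr(AB)+\Tr(AB^{-1})=\Tr A\Tr B$, so $f_{ab^{-1}}=xy-z$.

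For (4), write $[a,b]=(ab)(a^{-1}b^{-1})$ and apply (a) with $U=AB$ and $V=A^{-1}B^{-1}$. Since $V^{-1}=BA$, we get
\[
\Tr([A,B])=\Tr(AB)\Tr(A^{-1}B^{-1})-\Tr(ABBA).
\]
Here $\Tr(A^{-1}B^{-1})=\Tr((BA)^{-1})=\Tr(BA)=z$ by (b) and (c). The term $\Tr(AB^2A)=\Tr(A^2B^2)$ by conjugation. Compute it by (a) with $U=A^2$, $V=B^2$:
\[
\Tr(A^2B^2)=\Tr(A^2)\Tr(B^2)-\Tr(A^2B^{-2}).
\]
Then reduce $\Tr(A^2B^{-2})$ by conjugating to $\Tr(AB^{-2}A)$ and writing it as $\Tr((AB^{-1})(B^{-1}A))$, with $\Tr(AB^{-1})=\Tr(B^{-1}A)=xy-z$. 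Applying (a) once more gives
\[
\Tr(A^2B^{-2})=(xy-z)^2-\Tr(AB^{-1}A^{-1}B)=(xy-z)^2-\Tr([A,B]^{-1}\text{-conjugate}).
\]
This reintroduces a commutator trace, which is the same unknown $T=\Tr([A,B])$.

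The cleaner route, and the one I would actually use, avoids that circularity. First derive $\Tr(A^2B^2)$ differently: by (a) with $U=AB$, $V=BA$ (trace $z$), we have $\Tr(ABBA)+\Tr(ABA^{-1}B^{-1})=z^2$. So $T=z^2-\Tr(AB^2A)$. Next compute $\Tr(A\cdot B^2A)$ using (a) with $U=A$ and $V=B^2A$: this gives $\Tr(AB^2A)=x\Tr(B^2A)-\Tr(AA^{-1}B^{-2})=x\Tr(B^2A)-\Tr(B^{-2})$. Then $\Tr(B^2A)=\Tr(B\cdot BA)=yz-\Tr(A)=yz-x$, again by (a) with $U=B$, $V=BA$, since $B(BA)^{-1}=A^{-1}$. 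Hence $\Tr(AB^2A)=xyz-x^2-(y^2-2)$, and
\[
T=z^2+x^2+y^2-xyz-2 .
\]

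For (5), note that $[a,b^{-1}]=ab^{-1}a^{-1}b$ is a conjugate of $b\,ab^{-1}a^{-1}=[b,a]=[a,b]^{-1}$. So (1) gives $f_{[a,b^{-1}]}=f_{[a,b]}$.

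The only delicate step is (4): one has to choose the applications of (a) so that they do not circle back to the commutator. The route through $\Tr(AB^2A)$ described above does this.
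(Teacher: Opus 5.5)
Your proof is correct. Parts (1)--(3) match the paper's argument; the paper gets $\Tr(A^2)=x^2-2$ from Cayley--Hamilton rather than from (a) with $U=V=A$, which is the same fact. Parts (4) and (5) are done differently.

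In (4), you and the paper start from the same identity $\Tr([A,B])+\Tr(ABBA)=z^2$. The paper then evaluates $\Tr(A^2B^2)$ in one line by Cayley--Hamilton, expanding $(xA-I)(yB-I)=xyAB-xA-yB+I$. You instead stay entirely inside \cref{prop:basic}, applying (a) twice more: first with $U=A$, $V=B^2A$, then with $U=B$, $V=BA$. The paper's route is shorter, while yours uses nothing beyond the three basic identities.

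In (5), the paper applies (4) to the pair $(A,B^{-1})$, whose Fricke coordinates are $(x,y,xy-z)$. It then checks algebraically that $x^2+y^2+z^2-xyz-2$ is invariant under $z\mapsto xy-z$. Your observation that $b\,[a,b^{-1}]\,b^{-1}=bab^{-1}a^{-1}=[a,b]^{-1}$ reduces (5) to (1) with no computation, which is cleaner. The paper's version has the side benefit of exhibiting that invariance of the commutator polynomial explicitly.

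Two minor remarks. First, the equality $B(BA)^{-1}=A^{-1}$ is false: $B(BA)^{-1}=BA^{-1}B^{-1}$, which is only conjugate to $A^{-1}$. Its trace is still $x$ by (b) and (c), so $\Tr(B^2A)=yz-x$ is unaffected. Second, your first route in (4) is not actually a dead end. It gives $T=z^2-(x^2-2)(y^2-2)+(xy-z)^2-T$, that is, $2T=2(x^2+y^2+z^2-xyz-2)$, which can simply be solved for $T$.
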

\begin{proof}
Parts (1) and (2) follow directly from Proposition~\ref{prop:basic}.

For part (3), by the Cayley--Hamilton Theorem for every $M\in\SL(2,\C)$ we have
\[
 M^2-(\Tr M)M+I=0.
\]
Taking traces gives
\[
 \Tr(M^2)=(\Tr M)^2-2.
\]
Applying this to $M=A$ and to $M=B$ gives
\[
 f_{a^2}=x^2-2,
 \qquad
 f_{b^2}=y^2-2.
\]

Next, the basic trace identity
\[
 \Tr(UV)+\Tr(UV^{-1})=\Tr(U)\Tr(V)
\]
with $U=A$ and $V=B$ gives
\[
 z+\Tr(AB^{-1})=xy,
\]
and hence
\[
 f_{ab^{-1}}=xy-z.
\]
Thus (3) holds. 

For part (4), put
\[
 C=AB.
\]
Then
\[
 [A,B]=ABA^{-1}B^{-1}=C A^{-1}B^{-1}.
\]
Denote $\Tr(A)=x$, $\Tr(B)=y$ and $\Tr(AB)=z$. The basic trace identity \eqref{eq:basic-trace-identity} with $U=C$ and $V=AB$ gives
\[
 \Tr(CAB)+\Tr(C(AB)^{-1})=\Tr(C)\Tr(AB)=z^2.
\]
Since $C(AB)^{-1}=I$, this becomes
\begin{equation}\label{eq:comm-step-1}
 \Tr(ABAB)+2=z^2.
\end{equation}
Thus
\[
 \Tr(ABAB)=z^2-2.
\]
Now apply the basic trace identity to $U=AB$ and $V=A^{-1}B^{-1}$.  Since
\[
 UV=ABA^{-1}B^{-1}=[A,B],
\]
and
\[
 UV^{-1}=ABBA,
\]
we get
\begin{equation}\label{eq:comm-step-2}
 \Tr([A,B])+\Tr(ABBA)=\Tr(AB)\Tr(A^{-1}B^{-1})=z^2,
\end{equation}
because $\Tr(A^{-1}B^{-1})=\Tr((BA)^{-1})=\Tr(BA)=z$.
It remains to express $\Tr(ABBA)$.  By cyclic invariance of trace,
\[
 \Tr(ABBA)=\Tr(A^2B^2).
\]
Using the Cayley--Hamilton identities
\[
 A^2=xA-I,
 \qquad
 B^2=yB-I,
\]
we obtain
\[
 A^2B^2=(xA-I)(yB-I)=xyAB-xA-yB+I.
\]
Taking traces gives
\begin{equation}\label{eq:comm-step-3}
 \Tr(A^2B^2)=xyz-x^2-y^2+2.
\end{equation}
Substituting \eqref{eq:comm-step-3} into \eqref{eq:comm-step-2} yields
\[
 \Tr([A,B])=z^2-(xyz-x^2-y^2+2)=x^2+y^2+z^2-xyz-2.
\]
Therefore
\[
 f_{[a,b]}(x,y,z)=x^2+y^2+z^2-xyz-2.
\]
For part (5), apply part (4) to the pair $A,B^{-1}$.  The corresponding
Fricke coordinates are
\[
 \Tr A=x,\qquad \Tr(B^{-1})=y,\qquad \Tr(AB^{-1})=xy-z.
\]
Therefore
\[
 \Tr([A,B^{-1}])
 =x^2+y^2+(xy-z)^2-xy(xy-z)-2
 =x^2+y^2+z^2-xyz-2.
\]
This proves (5).
\end{proof}

Let $T_n(t)\in \Z[t]$ denote the Chebyshev polynomial of the first kind, normalized by
\[
 T_n(\cos\theta)=\cos(n\theta).
\]

\begin{lem}[Powers of one generator and Chebyshev polynomials]\label{lem:chebyshev-power}$ $

For every integer $n\ge 0$, we have
\[
 f_{a^n}(x,y,z)=2T_n(x/2),
 \qquad
 f_{b^n}(x,y,z)=2T_n(y/2).
\]
\end{lem}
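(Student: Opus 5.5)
I will prove the identity for $a^n$ by strong induction on $n$. The statement for $b^n$ then follows by the identical argument with $B$ in place of $A$ and $y$ in place of $x$. The starting point is the recursion for $\Tr(A^n)$ obtained from the basic trace identity \eqref{eq:basic-trace-identity}. For $n\ge 1$, apply it with $U=A^n$ and $V=A$:
\[
 \Tr(A^{n+1})+\Tr(A^{n-1})=\Tr(A^n)\Tr(A),
\]
so that
\[
 \Tr(A^{n+1})=x\,\Tr(A^n)-\Tr(A^{n-1}).
\]
Equivalently, one can multiply the Cayley--Hamilton relation $A^2=xA-I$ by $A^{n-1}$ and take traces.

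On the Chebyshev side, I use the standard three-term recurrence
\[
 T_{n+1}(t)=2t\,T_n(t)-T_{n-1}(t).
\]
It comes from the identity $\cos((n+1)\theta)+\cos((n-1)\theta)=2\cos\theta\cos(n\theta)$. That identity determines $T_n$ uniquely as a polynomial, because the polynomial identity holds on $[-1,1]$, which is an infinite set. Substituting $t=x/2$ and multiplying by $2$, the sequence $P_n(x):=2T_n(x/2)$ satisfies
\[
 P_{n+1}=x P_n-P_{n-1},
\]
which is exactly the same recursion as $\Tr(A^n)$.

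The base cases are:
\begin{itemize}
\item $n=0$: $\Tr(I)=2=2T_0(x/2)$.
\item $n=1$: $\Tr(A)=x=2\cdot(x/2)=2T_1(x/2)$.
\end{itemize}
Since both sequences satisfy the same second-order recursion with the same two initial values, induction gives $\Tr(A^n)=2T_n(\Tr A/2)$ for all $A\in\SL(2,\C)$ and all $n\ge 0$.

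To conclude that $f_{a^n}=2T_n(x/2)$ as polynomials in $\Z[x,y,z]$, I invoke the uniqueness part of \cref{thm:fricke-existence}. The polynomial $2T_n(x/2)$ satisfies the defining property of $f_{a^n}$, so it equals $f_{a^n}$.

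One small point needs checking: $2T_n(x/2)$ must have integer coefficients. This follows directly from the recursion $P_{n+1}=xP_n-P_{n-1}$ with $P_0=2$ and $P_1=x$, both of which lie in $\Z[x]$. No step here is a real obstacle. The only care needed is to state the uniqueness invocation correctly and to handle $n=0$, where $a^0$ is the trivial word and $f_{a^0}=2$.
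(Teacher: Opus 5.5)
Your proof is correct and follows essentially the same route as the paper: derive the recurrence $\Tr(A^{n+1})=x\,\Tr(A^n)-\Tr(A^{n-1})$ (the paper uses Cayley--Hamilton multiplied by $A^{n-1}$, which you also mention), observe that $2T_n(x/2)$ satisfies the same recurrence with the same initial values $2$ and $x$, and conclude. Your explicit appeal to the uniqueness in \cref{thm:fricke-existence}, together with the integrality check, makes precise a step the paper leaves implicit.
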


\begin{proof}
We prove the statement for $a^n$; the proof for $b^n$ is identical.  Let
$A\in \SL(2,\C)$ and put $x=\Tr A$.  The Cayley--Hamilton identity gives
\[
 A^2-xA+I=0.
\]
Multiplying by $A^{n-1}$ and taking traces yields, for $n\ge 1$,
\[
 \Tr(A^{n+1})=x\Tr(A^n)-\Tr(A^{n-1}).
\]
Thus the polynomials
\[
 P_n(x):=f_{a^n}(x,y,z)=\Tr(A^n)
\]
satisfy
\[
 P_0(x)=2,
 \qquad
 P_1(x)=x,
 \qquad
 P_{n+1}(x)=xP_n(x)-P_{n-1}(x).
\]
On the other hand, the polynomials
\[
 Q_n(x):=2T_n(x/2)
\]
satisfy the same initial conditions and the same recurrence, because the Chebyshev
polynomials satisfy
\[
 T_{n+1}(t)=2tT_n(t)-T_{n-1}(t).
\]
Therefore $P_n(x)=Q_n(x)=2T_n(x/2)$ for all $n\ge 0$.
\end{proof}

\section{A recursive algorithm}\label{sec:algorithm}

\begin{conv}[Normalization]\label{def:normalization}
For a freely reduced word $w\in F(a,b)$,  denote by  $\widehat w$ any cyclic permutation of the cyclically reduced form of $w$ or of $w^{-1}$.
In view of Proposition~\ref{prop:basicP}, we always have 
\[
 f_w=f_{\widehat w},\qquad f_{w^{-1}}=f_w.
\]
\end{conv}

\begin{thm}[Recursive computation of $f_w$]\label{thm:recursive-algorithm}
There is a deterministic recursive algorithm which computes $f_w\in\Z[x,y,z]$ for every freely reduced word $w\in F(a,b)$.
\end{thm}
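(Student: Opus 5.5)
The algorithm is a recursion on the cyclically reduced length $n=\|w\|_{\mathcal X}$, combined with a secondary induction on syllable length. It uses only the identities of \cref{prop:basic} and the normalization of \cref{def:normalization}.

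First, replace $w$ by its cyclically reduced form $\widehat w$; this is computable by free and cyclic cancellation, and $f_w=f_{\widehat w}$. Using $f_{w^{-1}}=f_w$ and cyclic permutation, we dispose of the trivial and short cases directly. For the empty word we have $f=2$. The words $a^{\pm1}$, $b^{\pm1}$, $(ab)^{\pm1}$ are given by \cref{prop:basicP}. Powers $a^k$ and $b^k$ are handled by the recurrence in \cref{lem:chebyshev-power}.

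For the inductive step, suppose some letter $s\in\{a^{\pm1},b^{\pm1}\}$ occurs in $\widehat w$ with both exponent signs, or some generator occurs with exponent of absolute value at least $2$. Cyclically permute so that $\widehat w = s\,u\,s^{\epsilon}v$, and apply \eqref{eq:basic-trace-identity} with $U=su$ and $V=s^{\epsilon}v$, or the cleaner variant: with $U=u$, $V=s$ and cyclic invariance. Concretely, if $\widehat w$ contains a subword $s^2$, write $\widehat w\sim s^2 u$. Then $\Tr(S^2U)=\Tr(S)\Tr(SU)-\Tr(U)$, and both $su$ and $u$ are strictly shorter. If instead $\widehat w$ contains both $s$ and $s^{-1}$, write $\widehat w\sim s u s^{-1} v$. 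Using \eqref{eq:basic-trace-identity} with $U=su$ and $V=vs$ (so that $UV^{-1}=su s^{-1}v^{-1}\cdots$) we get a relation of the form
\[
\Tr(\widehat w)=\Tr(su)\Tr(s^{-1}v)-\Tr(u v^{-1}),
\]
after cyclic rearrangement, and every term on the right has smaller cyclic length.

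What remains are words in which each generator appears with a single sign and all exponents are $\pm1$. Up to inversion and the symmetry $b\mapsto b^{-1}$, these are $(ab)^k$ or $(ab^{-1})^k$. They are handled by the Chebyshev recurrence of \cref{lem:chebyshev-power} applied to $M=AB$ or $M=AB^{-1}$, with $\Tr(AB^{-1})=xy-z$.

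The main obstacle is verifying that the reduction steps strictly decrease $n$ after free and cyclic reduction, and that the case analysis is exhaustive. In particular, I must check that when neither a square nor an opposite-sign pair occurs, the word really is a power of $ab^{\pm1}$. For this I would argue by syllable structure: exponents $\pm1$ with constant signs force strict alternation of $a^{\epsilon}$ and $b^{\delta}$. Termination then follows because each step produces finitely many words of smaller length, and correctness follows from uniqueness in \cref{thm:fricke-existence}. Integer arithmetic on coefficients is exact, so the output lies in $\Z[x,y,z]$.
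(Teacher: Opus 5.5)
Your opposite-sign step (the case $\widehat w\sim sus^{-1}v$) is wrong as stated, and this is a genuine gap. Take $U=su$ and $V=s^{-1}v$, so that $UV=\widehat w$. Then $UV^{-1}=suv^{-1}s$, and \eqref{eq:basic-trace-identity} gives
\[
\Tr(\widehat w)=\Tr(su)\Tr(s^{-1}v)-\Tr(s^2uv^{-1}).
\]
The two occurrences of $s$ have opposite signs, so they do not cancel in $UV^{-1}$ as they would in the same-sign case. Instead they merge into $s^2$. The correction term therefore has length $n$ before reduction and is in general not shorter; for the commutator it is $a^2b^2$. The relation you wrote, with $\Tr(uv^{-1})$ in place of $\Tr(s^2uv^{-1})$, is false. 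For $\widehat w=aba^{-1}b^{-1}$, with $s=a$, $u=b$, $v=b^{-1}$, it yields $z^2-y^2+2$ instead of $x^2+y^2+z^2-xyz-2$. (Your literal choice $V=vs$ gives $UV=suvs\ne\widehat w$, so it does not help either.) The ``secondary induction on syllable length'' is never actually set up, so nothing in the proposal supplies the missing decrease. As a result, words such as $[a,b]$, which contain no $s^2$ and are not powers of $ab^{\pm1}$, are left uncomputed.

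You can repair the step by applying your square identity once more, $f_{s^2uv^{-1}}=f_sf_{suv^{-1}}-f_{uv^{-1}}$, which gives
\[
f_{\widehat w}=f_{su}f_{s^{-1}v}-f_sf_{suv^{-1}}+f_{uv^{-1}}.
\]
Since $u,v\ne1$ by cyclic reducedness, every word on the right has length at most $n-1$, and this does return $x^2+y^2+z^2-xyz-2$ for the commutator.

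The paper avoids the opposite-sign case altogether. It splits at two occurrences of the \emph{same} letter with the \emph{same} sign, $w=sPsQ$, so that $UV^{-1}=sPQ^{-1}s^{-1}$ is conjugate to the shorter word $PQ^{-1}$, and $f_w=f_{sP}f_{sQ}-f_{PQ^{-1}}$. That single rule subsumes your square case and your $(ab^{\pm1})^k$ case for $k\ge2$. A cyclically reduced word with no repeated letter has length at most $4$, and in length $4$ it is conjugate to $[a,b]^{\pm1}$ or $[a,b^{-1}]^{\pm1}$, whose polynomial is computed directly.

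A minor point: you do not need the substitution $b\mapsto b^{-1}$, which does not preserve $f_w$ (it replaces $z$ by $xy-z$). Inversion and cyclic permutation already reduce $(a^{\epsilon}b^{\delta})^k$ to $(ab)^k$ or $(ab^{-1})^k$, which you handle correctly by the Chebyshev recurrence.
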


\begin{proof}
We use induction on $||w||_{\mathcal X}$.

Using Proposition~\ref{prop:basicP}, the base cases $||w||_{\mathcal X}\le 2$ are:
\[
 f_1=2,
\]
\[
 f_a=f_{a^{-1}}=x,
 \qquad
 f_b=f_{b^{-1}}=y,
\]
\[
 f_{a^2}=f_{a^{-2}}=x^2-2,
 \qquad
 f_{b^2}=f_{b^{-2}}=y^2-2,
\]
\[
 f_{ab}=f_{ba}=f_{a^{-1}b^{-1}}=f_{b^{-1}a^{-1}}=z,
\]
\[
 f_{ab^{-1}}=f_{b^{-1}a}=f_{a^{-1}b}=f_{ba^{-1}}=xy-z.
\]
These handle all freely reduced words of length at most two.

Assume now that $w\in F(a,b)$ is a freely reduced word with $||w||_{\mathcal X}=n\ge 3$ and that we already computed the trace polynomials for all words with cyclically reduced length $\le n-1$. Since the trace polynomial is invariant under conjugation, we may assume that $w$ is cyclically reduced with $|w|_{\mathcal X}=||w||_{\mathcal X}=n\ge 3$. 

Suppose that some letter
$s\in\{a,a^{-1},b,b^{-1}\}$ occurs at least twice in the (cyclically reduced) word $w$.  Note that such $s$ always exists if $n\ge 5$. Cyclically
rotate so that
\[
 w=sPsQ,
\]
where $P,Q$ are possibly empty reduced words.  Put
\[
 U=sP,
 \qquad
 V=sQ.
\]
Then $UV=w$.  By \eqref{eq:basic-trace-identity},
\[
 f_w=f_U f_V-f_{UV^{-1}}.
\]
But
\[
 UV^{-1}=sP(sQ)^{-1}=sPQ^{-1}s^{-1}
\]
is conjugate to $PQ^{-1}$.  Hence
\begin{equation}\label{eq:main-recursion}
 f_w=f_{sP}f_{sQ}-f_{PQ^{-1}}.
\end{equation}
Both $sP$ and $sQ$ have length strictly smaller than $n$, and $PQ^{-1}$ has length at most
$n-2$ before free reduction.  Hence, by the inductive hypothesis, $f_{sP}, f_{sQ}, f_{PQ^{-1}}$ are already known. Then \eqref{eq:main-recursion} computes $f_w$.

It remains to discuss the case where no letter occurs twice in the cyclic word.  Since
there are only four letters $a,a^{-1},b,b^{-1}$, the length is at most four.  Lengths
zero, one and two have already been treated.  A cyclically reduced word of length
three with no repeated letter cannot occur: among three distinct letters from
$\{a,a^{-1},b,b^{-1}\}$, either two inverse letters are adjacent, giving free reduction,
or they occur at the cyclic ends, violating cyclic reduction.  Therefore the only exceptional
case is length four, where all four letters occur exactly once.  Then $w$ is conjugate to one of $[a,b]^{\pm 1}$ or $[a,b^{-1}]^{\pm 1}$.  Indeed, after a cyclic rotation and replacing $w$ by $w^{-1}$ if necessary, we may assume that the first letter is $a$; cyclic reduction then leaves only the cyclic orders
\[
 a b a^{-1} b^{-1} \quad\hbox{and}\quad a b^{-1} a^{-1} b,
\]
which represent $[a,b]$ and $[a,b^{-1}]$, respectively.  Hence
\[
f_w=x^2+y^2+z^2-xyz-2. 
\]
This completes the proof.
\end{proof}

In Section~\ref{sec:polytime} below we will provide a different, matrix-based, algorithm for computing $f_w$. However, we use the recursive algorithm described in Theorem~\ref{thm:recursive-algorithm} to obtain results about quantitative properties of trace polynomials.

\section{Degree bounds}

\begin{conv}For a nontrivial cyclically reduced word $w$ in $F(a,b)$, which is not a power of $a$ or $b$, define the \emph{syllable length} $||w||_{syl}$ of $w$ as the number of transitions, counted cyclically, between nonzero powers of $a$ and $b$. For $w=a^n$ or $w=b^n$, where $n\ne 0$, we define the syllable length of $w$ to be $1$.

Any nontrivial cyclically reduced word $w$ in $F(a,b)$, which is not a power of $a$ or $b$, can be written, up to a cyclic permutation, in \emph{standard syllable form}
\[
 w=a^{\alpha_1}b^{\beta_1}\cdots a^{\alpha_s}b^{\beta_s},
\]
where $s\ge 1$ and $\alpha_i,\beta_i\ne 0$.

Note that in this case the syllable length of $w$ is $2s$, and we also have
 \[
 n=||w||_{\mathcal X}=\sum_i |\alpha_i| +\sum_i |\beta_i|.
 \]
 Moreover, $s\le n/2$ and the equality occurs if and only if all $\alpha_i,\beta_i=\pm 1$.
\end{conv}

For a nonzero polynomial $p\in\Z[x,y,z]$, let $\deg p$ denote total degree.

\begin{prop}[Upper bound]\label{prop:degree-upper}
For every $w\in F(a,b)$, we have
\[
 \deg f_w\le ||w||_{\mathcal X}
\]
\end{prop}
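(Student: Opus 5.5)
We argue by strong induction on $n=||w||_{\mathcal X}$, following the recursion in the proof of \cref{thm:recursive-algorithm}. Since $f_w$ depends only on the conjugacy class of $w$ (\cref{prop:basicP}(1)), we may assume $w$ is cyclically reduced of length $n$. We use the convention that $\deg$ of the zero polynomial is $-\infty$, so the inequality is trivially true there. The property we need is that $\deg(pq)\le\deg p+\deg q$ and $\deg(p-q)\le\max(\deg p,\deg q)$, so degree bounds propagate through \eqref{eq:main-recursion}.

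\emph{Base cases.} For $n\le 2$ we read off the list in the proof of \cref{thm:recursive-algorithm}. We have $f_1=2$ of degree $0$. The polynomials $x$ and $y$ have degree $1$. The polynomials $x^2-2$, $y^2-2$, $z$ and $xy-z$ all have degree at most $2$. The exceptional length-four case (conjugates of $[a,b]^{\pm1}$ and $[a,b^{-1}]^{\pm1}$) gives $x^2+y^2+z^2-xyz-2$, which has degree $3\le 4$.

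\emph{Inductive step.} Suppose $n\ge3$ and $w$ is not in the exceptional case. Then some letter $s$ repeats, and after cyclic rotation $w=sPsQ$ with $|P|+|Q|=n-2$. By \eqref{eq:main-recursion},
\[
 f_w=f_{sP}f_{sQ}-f_{PQ^{-1}}.
\]
The words $sP$ and $sQ$ have lengths $1+|P|$ and $1+|Q|$, both less than $n$. Their cyclically reduced lengths are at most these word lengths. By induction,
\[
 \deg(f_{sP}f_{sQ})\le (1+|P|)+(1+|Q|)=n.
\]
The word $PQ^{-1}$ has unreduced length $n-2$, and both free and cyclic reduction only shorten it. So $||PQ^{-1}||_{\mathcal X}\le n-2$, and by induction $\deg f_{PQ^{-1}}\le n-2$. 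Hence $\deg f_w\le \max(n,n-2)=n$.

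\emph{Main obstacle.} The only delicate point is bookkeeping: the inductive hypothesis must be applied to the cyclically reduced length of each auxiliary word, not its written length. This is harmless because reduction never increases length. Finally, for an arbitrary (not necessarily reduced) $w$, we replace it by the cyclically reduced representative $\widehat w$ of its conjugacy class. This leaves $f_w$ unchanged and has length exactly $||w||_{\mathcal X}$, which completes the proof.
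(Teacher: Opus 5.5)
Your proof is correct and follows the paper's route. You pass to a cyclically reduced conjugate and induct on length via $f_w=f_{sP}f_{sQ}-f_{PQ^{-1}}$, treating the length-four commutator case directly. Your explicit note that the induction hypothesis is applied to cyclically reduced lengths, which never exceed the written lengths of $sP$, $sQ$ and $PQ^{-1}$, makes the paper's brief bookkeeping precise.
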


\begin{proof}
Since trace polynomials are invariant under conjugation, we may first replace $w$ by a cyclically reduced conjugate.  Thus $|w|=||w||_{\mathcal X}$ for the word to which the recursive argument is applied.  The proof is then by induction on this length, using the recursion of \cref{thm:recursive-algorithm}.  The base cases are immediate.  In the main recursive step
\[
 f_w=f_{sP}f_{sQ}-f_{PQ^{-1}},
\]
we have
\[
 |sP|+|sQ|=|w|,
 \qquad
 |PQ^{-1}|\le |w|-2.
\]
By induction,
\[
 \deg(f_{sP}f_{sQ})\le |sP|+|sQ|=|w|,
\]
and
\[
 \deg f_{PQ^{-1}}\le |w|-2.
\]
The exceptional length-four step is finite and follows from the same identity; equivalently, it is the finite case where the recursion uses the commutator polynomial formula from the base cases.  Thus
$\deg f_w\le ||w||_{\mathcal X}$ for every $w\in F(a,b)$.
\end{proof}

\begin{rem}\label{rem:sharpDB}
The upper bound obtained in Proposition~\ref{prop:degree-upper} is sharp.

Indeed, by Lemma~\ref{lem:chebyshev-power}, for every $n\ge 1$ 
\[
f_{a^n}=2T_n(x/2)
\]
where $T_n$ is the Chebyshev polynomial of the first kind, which has degree $n$. Thus
\[
\deg f_{a^n}=n=||a^n||_{\mathcal X}.
\]
\end{rem}

We now recall a version of Horowitz's "generic triangular specialization" method from \cite{Horowitz1972}.  The strengthened degree formula below does not require this specialization, but the lemma gives useful context for the earlier degree estimates and for the relationship between trace-polynomial degree and syllable structure.

\begin{lem}[The Horowitz triangular specialization]\label{lem:horowitz-specialization-precise}
Let
\[
 w=a^{\alpha_1}b^{\beta_1}\cdots a^{\alpha_s}b^{\beta_s}
\]
be cyclically reduced, with all $\alpha_i,\beta_i\ne0$.  Here \(\lambda\) and \(\mu\) are algebraically independent indeterminates.  Put
\[
 K=\mathbb Q(\lambda,\mu),
 \qquad
 R=K[t].
\]
In $\SL(2,R)$ define
\[
 A=\begin{pmatrix}\lambda&t\\0&\lambda^{-1}\end{pmatrix},
 \qquad
 B=\begin{pmatrix}\mu&0\\t&\mu^{-1}\end{pmatrix}.
\]
Then $\Tr(w(A,B))\in K[t]$ has $t$-degree exactly $2s$, and its $t^{2s}$ coefficient is
\begin{equation}\label{eq:horowitz-leading-precise}
 H_w(\lambda,\mu)=
 \prod_{i=1}^s
 \frac{\lambda^{\alpha_i}-\lambda^{-\alpha_i}}{\lambda-\lambda^{-1}}
 \prod_{i=1}^s
 \frac{\mu^{\beta_i}-\mu^{-\beta_i}}{\mu-\mu^{-1}}.
\end{equation}
Moreover, if
\[
 f_w(x,y,z)=\sum_{k=0}^d g_k(x,y)z^k,
 \qquad g_k\in\Z[x,y],
\]
then under the specialization homomorphism
\[
 \Psi: \Z[x,y,z]\longrightarrow K[t],
\]
\[
 \Psi(x)=\lambda+\lambda^{-1},
 \qquad
 \Psi(y)=\mu+\mu^{-1},
 \qquad
 \Psi(z)=\lambda\mu+\lambda^{-1}\mu^{-1}+t^2,
\]
one has
\[
 \Psi(f_w)=\Tr(w(A,B)).
\]
Consequently $g_k=0$ for $k>s$, while $g_s\ne0$ and
\begin{equation}\label{eq:gs-specializes-horowitz}
 g_s(\lambda+\lambda^{-1},\mu+\mu^{-1})=H_w(\lambda,\mu).
\end{equation}
\end{lem}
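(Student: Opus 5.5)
We will compute $w(A,B)$ directly, track the top power of $t$, and then transfer the information to $f_w$ through $\Psi$.

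\emph{Step 1: powers of $A$ and $B$.} By induction using Cayley--Hamilton, for every integer $\alpha$
\[
 A^{\alpha}=\begin{pmatrix}\lambda^{\alpha}& t\,\sigma_\alpha(\lambda)\\0&\lambda^{-\alpha}\end{pmatrix},
 \qquad
 \sigma_\alpha(\lambda)=\frac{\lambda^{\alpha}-\lambda^{-\alpha}}{\lambda-\lambda^{-1}}.
\]
This also holds for negative $\alpha$, with $\sigma_{-\alpha}=-\sigma_\alpha$. Likewise $B^{\beta}$ is lower triangular with off-diagonal entry $t\,\sigma_\beta(\mu)$. Each entry therefore has $t$-degree at most $1$. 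The off-diagonal coefficients $\sigma_{\alpha}(\lambda)$ and $\sigma_\beta(\mu)$ are nonzero in $K$ because $\alpha,\beta\neq0$.

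\emph{Step 2: the top $t$-coefficient of the trace.} Expand $\Tr(A^{\alpha_1}B^{\beta_1}\cdots A^{\alpha_s}B^{\beta_s})$ as a sum over closed index paths $i_0\to i_1\to\cdots\to i_{2s}=i_0$. The factor $A^{\alpha}$ contributes a power of $t$ only on the step $1\to2$, and $B^\beta$ only on the step $2\to1$. Hence $t$-degree $2s$ is reached only when every $A$-factor takes the step $1\to 2$ and every $B$-factor takes the step $2\to1$. This is a single closed path, so the $t^{2s}$ coefficient is exactly $\prod_i\sigma_{\alpha_i}(\lambda)\prod_i\sigma_{\beta_i}(\mu)=H_w$, which is nonzero. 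Every other path uses fewer off-diagonal steps, so the $t$-degree is exactly $2s$.

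\emph{Step 3: transfer to $f_w$.} Under $\Psi$ the images of $A$ and $B$ satisfy $\Tr A=\lambda+\lambda^{-1}$, $\Tr B=\mu+\mu^{-1}$, and $\Tr AB=\lambda\mu+t^2+\lambda^{-1}\mu^{-1}$. The equality $\Psi(f_w)=\Tr(w(A,B))$ follows from the defining identity of $f_w$ (\cref{thm:fricke-existence}) as a polynomial identity over $\Z$. That identity holds for these $A,B$ over any commutative ring by the universality of the trace identities; alternatively, specialize $\lambda,\mu,t$ to complex numbers and argue by Zariski density.

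Then $\Psi(f_w)=\sum_k g_k(\lambda+\lambda^{-1},\mu+\mu^{-1})\,(t^2+c)^k$ with $c\in K$ independent of $t$. Let $d'$ be the largest $k$ with $g_k\neq0$. The coefficient of $t^{2d'}$ is $g_{d'}(\lambda+\lambda^{-1},\mu+\mu^{-1})$. This is nonzero because $\lambda+\lambda^{-1}$ and $\mu+\mu^{-1}$ are algebraically independent over $\mathbb Q$, so the map $g\mapsto g(\lambda+\lambda^{-1},\mu+\mu^{-1})$ is injective on $\Z[x,y]$. Comparing with Step 2 gives $2d'=2s$, so $g_k=0$ for $k>s$, $g_s\neq0$, and equation \eqref{eq:gs-specializes-horowitz} holds.

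The main obstacle is the bookkeeping in Step 2: signs for negative exponents, and verifying that no lower-path terms reach degree $2s$. The path-counting argument handles both cleanly.
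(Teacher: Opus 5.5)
Your proposal is correct and follows essentially the same route as the paper. You compute the triangular powers $A^{\alpha},B^{\beta}$ explicitly, observe that $t$-degree $2s$ forces the unique alternating off-diagonal path $E_{12}E_{21}\cdots E_{12}E_{21}=E_{11}$ with coefficient $H_w\neq0$, and then compare $t$-degrees using $\Psi(z)=(\lambda\mu+\lambda^{-1}\mu^{-1})+t^2$ and injectivity of $x\mapsto\lambda+\lambda^{-1}$, $y\mapsto\mu+\mu^{-1}$. The differences are cosmetic: you justify that injectivity by algebraic independence of $\lambda+\lambda^{-1},\mu+\mu^{-1}$ rather than the paper's leading-Laurent-monomial argument, and you give an explicit universality or Zariski-density justification for $\Psi(f_w)=\Tr(w(A,B))$, which the paper takes for granted.
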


\begin{proof}
First note that $A,B\in\SL(2,R)$, because $\det A=\det B=1$.  Since determinant-one
matrices over a commutative ring are invertible, evaluating any group word in $A,B$ is
well-defined in $\SL(2,R)$.  The identities
\[
 \Tr(A)=\lambda+\lambda^{-1},
 \quad
 \Tr(B)=\mu+\mu^{-1},
 \quad
 \Tr(AB)=\lambda\mu+\lambda^{-1}\mu^{-1}+t^2
\]
show that substituting these three expressions into the trace polynomial
$f_w$ as $x=\lambda+\lambda^{-1}$, $y=\mu+\mu^{-1}$, and
$z=\lambda\mu+\lambda^{-1}\mu^{-1}+t^2$ gives $\Tr(w(A,B))$:
\[
f_w(\lambda+\lambda^{-1}, \mu+\mu^{-1}, \lambda\mu+\lambda^{-1}\mu^{-1}+t^2)=\Tr(w(A,B)).
\]

For $m\ne0$, a direct computation gives
\[
 A^m=
 \begin{pmatrix}
  \lambda^m & t\,\dfrac{\lambda^m-\lambda^{-m}}{\lambda-\lambda^{-1}}\\
  0&\lambda^{-m}
 \end{pmatrix},
\]
and
\[
 B^m=
 \begin{pmatrix}
  \mu^m&0\\
  t\,\dfrac{\mu^m-\mu^{-m}}{\mu-\mu^{-1}}&\mu^{-m}
 \end{pmatrix}.
\]
In the product
\[
 M=A^{\alpha_1}B^{\beta_1}\cdots A^{\alpha_s}B^{\beta_s},
\]
for computing any entry $M_{ij}$ of $M$, each factor $A^{\alpha_i}$ contributes at most one power of $t$, and each factor
$B^{\beta_i}$ contributes at most one power of $t$.  Hence the $t$-degree of $M_{ij}$ is at most
$2s$, and the same holds for $\Tr(M)$. Each diagonal entry has $t$-degree $0$, while each off-diagonal entry has $t$-degree exactly $1$.  Hence the only way to obtain a term of degree $2s$ in $\Tr(M)=M_{11}+M_{22}$ is to choose the off-diagonal
entry from every one of these $2s$ factors.  The resulting matrix-unit product is
\[
 E_{12}E_{21}E_{12}E_{21}\cdots E_{12}E_{21}=E_{11},
\]
so it contributes to the trace, and its $t$-coefficient in $\Tr(M)$ is exactly
\eqref{eq:horowitz-leading-precise}.  This coefficient is nonzero in $K$, so the
$t$-degree is exactly $2s$.

Now write $f_w=\sum_k g_k(x,y)z^k$.  Since $\Psi(z)=z_0+t^2$, where
$z_0=\lambda\mu+\lambda^{-1}\mu^{-1}$, the contribution of a nonzero term
$g_k(x,y)z^k$ has $t$-degree $2k$ after applying $\Psi$, unless
$g_k(\lambda+\lambda^{-1},\mu+\mu^{-1})=0$.  The map
\[
 \Z[x,y]\longrightarrow \Z[\lambda^{\pm1},\mu^{\pm1}],
 \qquad
 x\mapsto\lambda+\lambda^{-1},\quad
 y\mapsto\mu+\mu^{-1},
\]
is injective: for example, if a nonzero polynomial in $x,y$ has highest total term
with a lexicographically maximal exponent pair $(p,q)$, then the Laurent monomial
$\lambda^p\mu^q$ cannot be cancelled by the images of smaller exponent pairs.
Therefore $g_k(\lambda+\lambda^{-1},\mu+\mu^{-1})=0$ implies $g_k=0$.
Since $\Psi(f_w)$ has $t$-degree exactly $2s$, it follows that $g_k=0$ for all
$k>s$.  Comparing the coefficient of $t^{2s}$ gives
\eqref{eq:gs-specializes-horowitz}, and in particular $g_s\ne 0$.
\end{proof}

\begin{nt}\label{nt:cyclic-pair-counts}
For the rest of this section we use the letters $a,a^{-1},b,b^{-1}$ explicitly.  If
$w=u_1\cdots u_n$ is a cyclically reduced word over $\{a,a^{-1},b,b^{-1}\}$, indices are read
modulo $n$.  For $r,s\in\{a,a^{-1},b,b^{-1}\}$ put
\[
 N_{rs}(w)=\#\{i:u_i=r,\ u_{i+1}=s\}.
\]
Also put
\[
 m_a(w)=\#\{i:u_i\in\{a,a^{-1}\}\},\qquad
 m_b(w)=\#\{i:u_i\in\{b,b^{-1}\}\},
\]
and
\[
 R(w)=N_{ab}(w)+N_{b^{-1}a^{-1}}(w).
\]
\end{nt}

\begin{lem}\label{lem:type-change-count-balance}
For every cyclic word $w$ over $\{a,a^{-1},b,b^{-1}\}$,
\[
 N_{ab}(w)+N_{b^{-1}a^{-1}}(w)=N_{ba}(w)+N_{a^{-1}b^{-1}}(w).
\]
Consequently
\[
 R(w)=\frac{1}{2}\bigl(N_{ab}(w)+N_{ba}(w)+N_{a^{-1}b^{-1}}(w)+N_{b^{-1}a^{-1}}(w)\bigr).
\]
\end{lem}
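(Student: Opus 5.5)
We prove the identity by a counting argument on the transitions between the two letter types. Throughout, the cyclic word $w=u_1\cdots u_n$ is read modulo $n$.

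First we classify the letters by sign and type. Call $a,b$ \emph{positive} and $a^{-1},b^{-1}$ \emph{negative}. Call $a^{\pm1}$ letters of \emph{type $a$} and $b^{\pm1}$ letters of \emph{type $b$}.

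Next we define a signed weight on each letter and telescope. Put
\[
 \epsilon(r)=\begin{cases}+1,& r\in\{a,b^{-1}\},\\ -1,& r\in\{a^{-1},b\}.\end{cases}
\]
For each cyclic index $i$ consider the difference $\epsilon(u_{i+1})-\epsilon(u_i)$. Summing over $i$ gives $0$, since the sum telescopes around the cycle.

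The key point is to restrict this to the type changes. Consider only the indices where the type changes, that is, the pairs $(u_i,u_{i+1})$ consisting of one type-$a$ letter and one type-$b$ letter. There are eight such ordered pairs:
\begin{itemize}
\item $ab$ and $b^{-1}a^{-1}$: here $\epsilon$ goes from $+1$ to $-1$ in both cases;
\item $ba$ and $a^{-1}b^{-1}$: here $\epsilon$ goes from $-1$ to $+1$ in both cases;
\item $ab^{-1}$, $b^{-1}a$, $a^{-1}b$, $ba^{-1}$: here $\epsilon$ is unchanged.
\end{itemize}

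The choice of $\epsilon$ also handles the indices that are not type changes. A same-type step is one of $aa$, $a^{-1}a^{-1}$, $bb$, $b^{-1}b^{-1}$. The steps $aa^{-1}$ and $bb^{-1}$ cannot occur, because $w$ is reduced. If one prefers not to use reducedness, these steps still change $\epsilon$ and the argument fails, so we do use it. On every same-type step of a reduced cyclic word, $\epsilon$ is constant.

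Putting this together, the telescoping sum equals
\[
 -2\bigl(N_{ab}+N_{b^{-1}a^{-1}}\bigr)+2\bigl(N_{ba}+N_{a^{-1}b^{-1}}\bigr)=0,
\]
which is the claimed identity.

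It remains to address the statement's phrase "every cyclic word". Its word-level content is exactly the cyclically reduced setting of \cref{nt:cyclic-pair-counts}, so we note that cyclic reducedness is what is being used. Adding the identity to itself then gives
\[
 2R=N_{ab}+N_{b^{-1}a^{-1}}+N_{ba}+N_{a^{-1}b^{-1}},
\]
which is the stated formula for $R(w)$.

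The only delicate point is choosing the weight $\epsilon$ so that it is constant on all same-type steps and on the "mixed-sign" transitions; everything else is routine bookkeeping.
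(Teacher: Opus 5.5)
Your proof is correct and is essentially the paper's argument. The paper uses the same weight ($+1$ on $a,b^{-1}$ and $-1$ on $a^{-1},b$) and observes that, around the circle, the downward weight changes (counted by $N_{ab}+N_{b^{-1}a^{-1}}$) and the upward ones (counted by $N_{ba}+N_{a^{-1}b^{-1}}$) must balance, which is exactly your telescoping sum. Your explicit remark that reducedness is needed, so that $\epsilon$ is constant on same-type steps, is a genuine clarification that the paper's proof leaves implicit: the identity fails for ``every cyclic word'' taken literally, since the non-reduced cyclic word $abb^{-1}$ has $N_{ab}+N_{b^{-1}a^{-1}}=1$ but $N_{ba}+N_{a^{-1}b^{-1}}=0$.
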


\begin{proof}
Traverse the cyclic word and record only those cyclic adjacencies at which the
underlying generator changes from the $a$-type letters $\{a,a^{-1}\}$ to the $b$-type
letters $\{b,b^{-1}\}$, or conversely.  The $a$-to-$b$ changes and the $b$-to-$a$
changes alternate around the circle.  Put weight $+1$ on the letters $a$ and $b^{-1}$, and
weight $-1$ on the letters $a^{-1}$ and $b$.  The adjacencies counted by
$N_{ab}+N_{b^{-1}a^{-1}}$ are precisely the type changes from weight $+1$ to weight $-1$, while
those counted by $N_{ba}+N_{a^{-1}b^{-1}}$ are precisely the type changes from weight $-1$ to
weight $+1$.  Around a circle the number of such changes in the two directions is the
same.  This gives the stated identity.
\end{proof}

\begin{prop}[Top homogeneous term and exact degree]\label{prop:top-homogeneous-exact-degree}
Let $w=u_1\cdots u_n\in F(a,b)$ be a nontrivial cyclically reduced word over
$\{a,a^{-1},b,b^{-1}\}$.  Then
\begin{equation}\label{eq:top-homogeneous-general}
 [f_w]_{\rm top}=(-1)^{N_{ab}(w)+N_{ba}(w)}
 x^{m_a(w)-R(w)}y^{m_b(w)-R(w)}z^{R(w)}.
\end{equation}
In particular,
\begin{equation}\label{eq:exact-degree-general}
 \deg f_w=n-R(w)
 =n-\frac{1}{2}\bigl(N_{ab}(w)+N_{ba}(w)+N_{a^{-1}b^{-1}}(w)+N_{b^{-1}a^{-1}}(w)\bigr).
\end{equation}
\end{prop}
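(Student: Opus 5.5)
The plan is to obtain $[f_w]_{\rm top}$ from one explicit expansion of $\Tr(w(A,B))$ into traces of positive words, and then to find which terms of that expansion reach the highest degree.

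\textbf{Expansion.} By Cayley--Hamilton, for $A,B\in\SL(2,\C)$ we have
\[
A^{-1}=xI-A,\qquad B^{-1}=yI-B.
\]
Substitute these into $w=u_1\cdots u_n$ and expand the product. Each positive letter contributes $A$ or $B$. Each inverse letter contributes either the scalar $x$ (resp.\ $y$) or the matrix $-A$ (resp.\ $-B$). For a choice $\sigma$ of one option at each inverse letter we get a term $\pm\, x^{p}y^{q}\,\Tr(W_\sigma(A,B))$. Here $W_\sigma$ is the cyclic positive word formed by the surviving matrix letters, $p,q$ count the scalars chosen, and the sign is $(-1)^{\#\{\text{inverse letters kept as matrices}\}}$.

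\textbf{Positive words.} Next I would prove, by induction with the recursion of \cref{thm:recursive-algorithm} (or directly with $A^2=xA-I$, $B^2=yB-I$), the following formula. For a cyclic positive word $W$ with $m_a(W)$ letters $a$, $m_b(W)$ letters $b$ and $N_{ab}(W)=k$ cyclic $ab$-adjacencies,
\[
[f_W]_{\rm top}=x^{m_a-k}y^{m_b-k}z^k .
\]
This is the positive case of the statement, and it also follows from \cref{thm:intro-positive-degree}(1), which is proved independently. Concretely, each $a^{\alpha}$ collapses to $x^{\alpha-1}A$ plus lower-degree terms, and similarly for $b^\beta$. The alternating word that remains, $(AB)^k$, has trace with top part $z^k$ by \cref{lem:chebyshev-power} applied to $AB$.

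\textbf{Degree of each term.} Combining the two steps, the term indexed by $\sigma$ has top part of degree
\[
p+q+|W_\sigma|-N_{ab}(W_\sigma).
\]
Since $p+q+|W_\sigma|=n$, this degree is $n$ minus the number of cyclic $AB$-adjacencies in $W_\sigma$.
\begin{itemize}
\item An adjacency $ab$ of $w$ always yields an $AB$-adjacency, so it always costs one degree.
\item An adjacency $b^{-1}a^{-1}$ can be killed by turning either letter into a scalar. However, converting a letter to a scalar may create a new adjacency: for instance $a\,b^{-1}a^{-1}b$ with $b^{-1}\mapsto y$ glues $a$ to the following letters.
\end{itemize}

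\textbf{Main obstacle.} The key combinatorial step, which I expect to be the hardest, is to show two things. First, $\min_\sigma N_{ab}(W_\sigma)=R(w)$. Second, and crucially, the terms attaining this minimum do not cancel: their top parts must combine to the single monomial with sign $(-1)^{N_{ab}+N_{ba}}$. My first attempt would be a local analysis along the cyclic word, using the weight $\pm1$ bookkeeping from the proof of \cref{lem:type-change-count-balance}. That argument pairs $(+1\to-1)$ type changes with $(-1\to+1)$ type changes. I would try to show that the optimal choices are essentially forced: keep inverse letters as matrices except at places where this creates no new $AB$. Each $b^{-1}a^{-1}$ adjacency then unavoidably costs one, either as $BA$ glued around to an $AB$ elsewhere, or through a forced new adjacency. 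The sign would be tracked through the count of inverse letters kept, and compared with $N_{ab}+N_{ba}$.

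If the cancellation analysis becomes unwieldy, the fallback is induction on $n$. The induction would use the identity
\[
f_{w}=f_{sP}f_{sQ}-f_{PQ^{-1}}
\]
together with a check that $R$ is additive, $R(sP)+R(sQ)=R(w)$, for a suitable choice of the repeated letter $s$. One then needs $\deg f_{PQ^{-1}}<n-R(w)$, or else that any equal-degree contribution is accounted for. The exceptional commutator cases are checked against \cref{prop:basicP}(4),(5).
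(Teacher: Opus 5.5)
There is a genuine gap, and it sits exactly at the step you call the main obstacle: the claims you plan to prove there are false. Expanding with $A^{-1}=xI-A$, $B^{-1}=yI-B$ produces terms of degree \emph{larger} than $n-R(w)$, and these must cancel. Take $w=a^{-1}b^{-1}$, where $R(w)=1$ because of the cyclic pair $b^{-1}a^{-1}$. Then $\Tr\bigl((xI-A)(yI-B)\bigr)=2xy-xy-xy+z$. The three choices $\sigma$ that use a scalar all have $N_{ab}(W_\sigma)=0<R(w)$, and their degree-two parts cancel completely. For $w=(a^{-1}b^{-1})^k$ the all-scalar term $2(xy)^k$ has degree $2k$, while $\deg f_w=k$. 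So $\min_\sigma N_{ab}(W_\sigma)=R(w)$ fails. Even at degree $n-R(w)$ the optimal $\sigma$ are not forced: for $w=aba^{-1}b^{-1}$ three choices reach degree $3$, contributing $xyz-xyz-xyz$. The top part is therefore a signed inclusion--exclusion sum with cancellation both above and at the true degree, and your local heuristic does not evaluate it. Also, \cref{thm:intro-positive-degree}(1) is not independent of this statement. The paper deduces it from this very proposition, and it gives only the degree, not the top monomial, so you would need your own induction for the positive case. The paper avoids cancellation altogether by specializing to ${\mathsf A}_t,{\mathsf B}_t$ over $K((t^{-1}))$ with traces $Xt,Yt,Zt$. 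Then $\Tr w({\mathsf A}_t,{\mathsf B}_t)=f_w(Xt,Yt,Zt)$, and the top $t$-degree of the two-state path expansion is attained by a unique closed path.

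Your fallback induction is the workable route, but its decisive inequality $\deg f_{PQ^{-1}}<n-R(w)$ is only stated as something one needs. It can be supplied in three steps.

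\emph{Additivity.} For \emph{every} choice of $s$, the cyclic adjacencies of $w=sPsQ$ are exactly those of $sP$ together with those of $sQ$. Hence $m_a,m_b,N_{ab},N_{ba},R$ are all additive.

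\emph{Each letter occurrence lies in at most one $R$-pair.} The letters $a$ and $b^{-1}$ can only begin an $R$-pair, and $b$ and $a^{-1}$ can only end one. Inversion swaps $ab$ and $b^{-1}a^{-1}$, so the internal adjacencies of $Q^{-1}$ keep their status. Thus the unreduced cyclic word $PQ^{-1}$ has length $n-2$ and $R\ge R(w)-2$. If two $R$-pairs are lost, they both leave (or both enter) the two copies of $s$. This forces $P$ and $Q$ to have the same first (or last) letter, so $PQ^{-1}$ has a cancellation deleting two letters, neither of which lies in an $R$-pair.

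\emph{Reduction.} Each cancellation step lowers the length by $2$ and $R$ by at most $2$, so length minus $R$ never increases under reduction.

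With the inductive hypothesis this gives $\deg f_{PQ^{-1}}\le n-R(w)-1$. Hence $[f_w]_{\rm top}=[f_{sP}]_{\rm top}[f_{sQ}]_{\rm top}$, and additivity plus the base cases (length at most $2$ and the commutators) yield the formula. As written, however, neither of your routes is carried through.
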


\begin{proof}
Let $K=\mathbb Q(X,Y,Z)$ and work in the Laurent-series field $K((t^{-1}))$.  Choose
$q\in K((t^{-1}))$ satisfying
\[
 q+q^{-1}=Zt,
\]
with leading term $q=Zt+O(t^{-1})$.  Hence $q$ has $t$-degree $1$ and leading
coefficient $Z$, while $q^{-1}$ has $t$-degree $-1$.  Put
\[
 {\mathsf A}_t=\begin{pmatrix}Xt&-1\\ 1&0\end{pmatrix},\qquad
 {\mathsf B}_t=\begin{pmatrix}0&q^{-1}\\ -q&Yt\end{pmatrix}.
\]
Then ${\mathsf A}_t,{\mathsf B}_t\in\SL(2,K((t^{-1})))$ and
\[
 \Tr {\mathsf A}_t=Xt,\qquad
 \Tr {\mathsf B}_t=Yt,\qquad
 \Tr({\mathsf A}_t{\mathsf B}_t)=Zt.
\]
Therefore
\begin{equation}\label{eq:specialization-degree-top}
 \Tr w({\mathsf A}_t,{\mathsf B}_t)=f_w(Xt,Yt,Zt).
\end{equation}
The highest power of $t$ in the right-hand side is the total degree of $f_w$, and
its coefficient is the top homogeneous part $[f_w]_{\rm top}(X,Y,Z)$.

We now compute the leading term of the left-hand side.  The inverses are
\[
 {\mathsf A}_t^{-1}=\begin{pmatrix}0&1\\ -1&Xt\end{pmatrix},\qquad
 {\mathsf B}_t^{-1}=\begin{pmatrix}Yt&-q^{-1}\\ q&0\end{pmatrix}.
\]
Expand the trace as a sum over closed two-state paths
\[
 \epsilon_0,\epsilon_1,\dots,\epsilon_n=\epsilon_0,\qquad
 \epsilon_i\in\{1,2\},
\]
where the $i$-th factor contributes the matrix entry from row $\epsilon_{i-1}$ to
column $\epsilon_i$.  We give each entry its $t$-degree and its leading coefficient.
The relevant leading data are
\[
\begin{array}{c|cccc}
 &11&12&21&22\\ \hline
 a&X&-1&1&0\\
 a^{-1}&0&1&-1&X\\
 b&0&0&-Z&Y\\
 b^{-1}&Y&0&Z&0
\end{array}
\]
where an entry $0$ means that no term of nonnegative $t$-degree is available in that
position; the displayed symbols are the leading coefficients of the nonnegative-degree
entries, and the entries $X,Y,Z$ all have $t$-degree $1$ while the entries $\pm1$ have
$t$-degree $0$.

We record the elementary two-state calculation needed from this table.  For a letter
$u$, let $E^+(u)$ be the set of possible exit states of degree-one entries in the row
of the table for $u$, and let $E^-(u)$ be the set of possible entry states of such
entries.  Thus
\[
\begin{array}{c|cccc}
u&a&a^{-1}&b&b^{-1}\\ \hline
E^-(u)&\{1\}&\{2\}&\{2\}&\{1,2\}\\
E^+(u)&\{1\}&\{2\}&\{1,2\}&\{1\}.
\end{array}
\]
A cyclic boundary $uv$ can have degree-one contributions from both adjacent letters
only if $E^+(u)\cap E^-(v)\ne\emptyset$.  Since $w$ is cyclically reduced, the only
cyclic adjacencies for which this intersection is empty are
\[
   ab\quad\hbox{and}\quad b^{-1}a^{-1}.
\]
For reference, the nonexceptional cyclically reduced adjacent pairs have the following
unique boundary states in $E^+(u)\cap E^-(v)$:
\[
\begin{array}{c|cccccccccc}
uv&aa&ab^{-1}&a^{-1}a^{-1}&a^{-1}b&a^{-1}b^{-1}&ba&ba^{-1}&bb&b^{-1}a&b^{-1}b^{-1}\\ \hline
\hbox{state}&1&1&2&2&2&1&2&2&1&1
\end{array}
\]
Hence every closed path has to lose at least one unit of $t$-degree at each of the
$R(w)=N_{ab}(w)+N_{b^{-1}a^{-1}}(w)$ such boundaries.  These losses are independent in the following elementary sense: two such exceptional boundaries cannot be consecutive in a cyclically reduced word, since a boundary following $ab$ begins with $b$, while a boundary following $b^{-1}a^{-1}$ begins with $a^{-1}$, and neither $b$ nor $a^{-1}$ can begin one of the exceptional pairs $ab,b^{-1}a^{-1}$.  Therefore every closed path has
$t$-degree at most
\begin{equation}\label{eq:maxplus-bound}
 n-R(w).
\end{equation}

This upper bound is attained as follows.  At every cyclic boundary $uv$ with
$E^+(u)\cap E^-(v)\ne\emptyset$, choose the boundary state in that intersection; for a cyclically reduced adjacent pair this intersection is a singleton.  At a
boundary $ab$ choose the boundary state $2$; then the preceding $a$ uses the
zero-degree entry $a_{12}=-1$, while the following $b$ can still use a degree-one
entry.  At a boundary $b^{-1}a^{-1}$ choose the boundary state $1$; then the following $a^{-1}$ uses
the zero-degree row-$1$, column-$2$ entry, while the preceding $b^{-1}$ can still use a degree-one
entry.  These local choices are compatible around the cycle, because they prescribe
exactly one boundary state at each cyclic boundary.  More explicitly, the prescribed
state at the boundary between the $i$-th and $(i+1)$-st letters determines at the same
time the exit state of the $i$-th letter and the entry state of the $(i+1)$-st letter.
Since no boundary is prescribed twice and the exceptional boundaries are not
consecutive, the prescriptions do not conflict.  Conversely, any closed path of
$t$-degree $n-R(w)$ must lose exactly one degree at each exceptional boundary and none
elsewhere.  Therefore at every nonexceptional boundary its state must lie in the
singleton $E^+(u)\cap E^-(v)$, while at an $ab$ boundary it must be $2$ and at a $b^{-1}a^{-1}$
boundary it must be $1$.  Hence the maximal-degree closed path is unique.  In
particular these prescriptions determine a unique closed path of $t$-degree $n-R(w)$.

It remains only to identify the leading coefficient.  In the unique maximal path,
each boundary $ab$ or $b^{-1}a^{-1}$ accounts for exactly one loss of degree and no other
boundary accounts for a loss.  Reading the boundary states from the preceding table
shows the following.  Among the $a$-type letters, precisely the letters immediately
preceding a cyclic adjacency $ab$ and the letters immediately following a cyclic
adjacency $b^{-1}a^{-1}$ use a zero-degree off-diagonal entry; all other $a$-type letters
contribute an $X$.  Hence the exponent of $X$ is $m_a(w)-R(w)$.  Among the
$b$-type letters, precisely the letters immediately preceding a cyclic adjacency
$ba$ and the letters immediately following a cyclic adjacency $a^{-1}b^{-1}$ contribute a
$Z$; all other $b$-type letters contribute a $Y$.  By
\cref{lem:type-change-count-balance}, the number of such $Z$-contributions is
$N_{ba}(w)+N_{a^{-1}b^{-1}}(w)=R(w)$, and therefore the exponent of $Y$ is $m_b(w)-R(w)$.
The only negative signs in the unique maximal path occur at the zero-degree entry
$a_{12}=-1$ attached to a cyclic adjacency $ab$, and at the leading entry
$b_{21}=-Z$ attached to a cyclic adjacency $ba$.  The entries attached to $b^{-1}a^{-1}$ and
$a^{-1}b^{-1}$ have positive leading coefficient.  Therefore the coefficient of $t^{n-R(w)}$
in the closed-path sum is
\begin{equation}\label{eq:maxplus-leading-coefficient}
 (-1)^{N_{ab}(w)+N_{ba}(w)}
 X^{m_a(w)-R(w)}Y^{m_b(w)-R(w)}Z^{R(w)}.
\end{equation}
In particular this coefficient is nonzero.  This proves both \eqref{eq:maxplus-bound}
and \eqref{eq:maxplus-leading-coefficient}.

Combining \eqref{eq:specialization-degree-top} with
\eqref{eq:maxplus-leading-coefficient} gives \eqref{eq:top-homogeneous-general}.  The
monomial in \eqref{eq:top-homogeneous-general} is nonzero, so its total degree is
\[
 (m_a-R)+(m_b-R)+R=n-R.
\]
This proves the first equality in \eqref{eq:exact-degree-general}, and the second follows
from \cref{lem:type-change-count-balance}.
\end{proof}

\smallskip
\noindent For example, for the commutator word \(w=aba^{-1}b^{-1}\), one has \(R(w)=1\) and
\(m_a(w)=m_b(w)=2\), so \eqref{eq:top-homogeneous-general} gives
\([f_w]_{\rm top}=-xyz\), in agreement with
\(f_{[a,b]}=x^2+y^2+z^2-xyz-2\).
\smallskip

\begin{thm}[Sharp degree bounds]\label{thm:degree-lower}
For every nontrivial cyclically reduced word $w$ of length $n$,
\[
 \left\lceil \frac{n}{2}\right\rceil\le \deg f_w\le n.
\]
Moreover, for every $n\ge 1$, both the lower and upper bounds are attained by some cyclically reduced word of length $n$.
\end{thm}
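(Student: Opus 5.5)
The plan is to derive both bounds directly from the exact degree formula of \cref{prop:top-homogeneous-exact-degree}, $\deg f_w=n-R(w)$ with $R(w)=N_{ab}(w)+N_{b^{-1}a^{-1}}(w)$. The upper bound $\deg f_w\le n$ is immediate, since $R(w)\ge 0$; it also follows from \cref{prop:degree-upper}. For the lower bound it suffices to show $R(w)\le \lfloor n/2\rfloor$.

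To bound $R(w)$, I will use the observation already made in the proof of \cref{prop:top-homogeneous-exact-degree}. The exceptional cyclic boundaries, namely the positions $i$ with $u_iu_{i+1}\in\{ab,\,b^{-1}a^{-1}\}$, are never cyclically consecutive. A boundary following $ab$ starts with $b$, and a boundary following $b^{-1}a^{-1}$ starts with $a^{-1}$; neither letter can start an exceptional pair. Thus the set of exceptional boundary positions is an independent set in the cycle $C_n$ of boundary positions.

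\begin{itemize}
\item For $n\ge 2$, an independent set in $C_n$ has size at most $\lfloor n/2\rfloor$. Hence $R(w)\le\lfloor n/2\rfloor$ and $\deg f_w\ge n-\lfloor n/2\rfloor=\lceil n/2\rceil$.
\item For $n=1$, the word $w$ is a single letter. Here $R(w)=0$, because $u_1u_1$ is never $ab$ or $b^{-1}a^{-1}$, so $\deg f_w=1$.
\end{itemize}

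Equivalently, one can count: each exceptional boundary occupies two letters, and distinct exceptional boundaries share no letter. Since $u_i$ is the first letter of an exceptional pair only if $u_i\in\{a,b^{-1}\}$, and the second letter only if $u_i\in\{b,a^{-1}\}$, a letter can belong to two exceptional pairs only by being the second letter of one and the first of the next. That would force consecutive boundaries, which is excluded. So $2R(w)\le n$.

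For sharpness, the upper bound is attained by $a^n$, as in \cref{rem:sharpDB}. For the lower bound:
\begin{itemize}
\item If $n$ is even, take $w=(ab)^{n/2}$. Then $R=n/2$, so $\deg f_w=n/2$.
\item If $n$ is odd, take $w=(ab)^{(n-1)/2}a$ for $n\ge 3$, and $w=a$ for $n=1$. For $n\ge3$, reading cyclically, the boundary pairs are $ab$ repeated $(n-1)/2$ times, then $ba$, and finally the wrap-around pair $aa$. So $R=(n-1)/2$, the word is cyclically reduced of length $n$, and $\deg f_w=n-(n-1)/2=\lceil n/2\rceil$.
\end{itemize}
As a check, $f_{ab}=z$ has degree $1$, matching $n=2$.

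The main obstacle is the small-$n$ bookkeeping: the $n=1$ self-adjacent boundary, and $n=2$, where the cycle $C_2$ has a double edge, must not break the independent-set bound. I will treat these cases directly; for $n=2$, $ab$ gives $R=1$, which is allowed.
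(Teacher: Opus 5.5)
Your proof is correct. The upper bound and both sharpness families are exactly the paper's; the difference is in how you bound $R(w)$.

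The paper first separates out powers of a single generator. It writes the remaining words in syllable form $a^{\alpha_1}b^{\beta_1}\cdots a^{\alpha_s}b^{\beta_s}$ and uses $R(w)\le s\le n/2$. Here $R(w)\le s$ comes from \cref{lem:type-change-count-balance}: $2R(w)$ counts a subset of the $2s$ cyclic type changes.

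You instead reuse the local observation from the proof of \cref{prop:top-homogeneous-exact-degree} that exceptional boundaries are never cyclically consecutive. This gives $2R(w)\le n$ at once, with no syllable decomposition, no balance lemma, and no separate generator-power case beyond the trivial $n=1$.

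What the paper's route buys is the finer intermediate inequality $\deg f_w\ge n-s$, a bound by half the syllable length. It is much stronger for words with long syllables, and it is an equality for positive words by \cref{prop:positive-degree}. Your method recovers this refinement too if you apply the same non-adjacency argument to the cyclic sequence of $2s$ type changes instead of to all $n$ boundaries. Each syllable is a power of a single letter, so the next type change after an $ab$ change begins with $b$, and the next one after a $b^{-1}a^{-1}$ change begins with $a^{-1}$. Neither can be exceptional, so no two consecutive type changes are exceptional, and $R(w)\le s$ follows.

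One small slip: for $(ab)^ka$ the cyclic boundary pairs are $ab,ba,\dots,ab,ba,aa$, with $k$ copies each of $ab$ and $ba$, not ``$ab$ repeated $k$ times, then $ba$''. Your count $R=k$ is unaffected.
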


\begin{proof}
The upper bound follows immediately from \cref{prop:top-homogeneous-exact-degree}, since
$\deg f_w=n-R(w)$ and $R(w)\ge0$.  It is attained, for every $n\ge1$, by the generator power
$a^n$, because the Chebyshev trace polynomial $f_{a^n}(x)=2T_n(x/2)$ has degree $n$.

If $w$ is a power of a single generator, the lower bound follows from the same Chebyshev trace
polynomial.

Otherwise write $w$ cyclically as
\begin{equation}\label{eq:syllable-word-degree-lower}
 w=a^{\alpha_1}b^{\beta_1}\cdots a^{\alpha_s}b^{\beta_s},
\end{equation}
with all $\alpha_i,\beta_i\ne0$.  Let
\[
 n=||w||_{\mathcal X}=\sum_i |\alpha_i|+\sum_i |\beta_i|.
\]
By \cref{prop:top-homogeneous-exact-degree},
\[
 \deg f_w=n-R(w).
\]
The number $R(w)=N_{ab}(w)+N_{b^{-1}a^{-1}}(w)$ counts selected type-changing adjacencies, so
$R(w)\le s$.  Since $s\le n/2$, this implies
\[
 \deg f_w\ge n-s\ge \left\lceil \frac{n}{2}\right\rceil.
\]
Sharpness of this lower bound can be seen directly from \cref{prop:top-homogeneous-exact-degree}.  For even $n=2k$, take
\[
 w=(ab)^k.
\]
Then $R(w)=k$, and hence $\deg f_w=2k-k=k$.  For odd $n=2k+1$, take
\[
 w=(ab)^k a.
\]
Then again $R(w)=k$, and hence
\[
 \deg f_w=2k+1-k=k+1=\left\lceil n/2\right\rceil.
\]
\end{proof}

\section{Positive words and typical degree}

A \emph{positive word} is a (nontrivial) word in the monoid generated by $a,b$.

\begin{prop}[Degree of positive words]\label{prop:positive-degree}
For a positive word 
\[
 w=a^{\alpha_1}b^{\beta_1}\cdots a^{\alpha_s}b^{\beta_s},
\]
in $F(a,b)$ in standard syllable form involving both $a$ and $b$ as above, with
\[
 n=||w||_{\mathcal X}=\sum_i\alpha_i+\sum_i\beta_i,
\]
and $\alpha_i,\beta_i>0$, one has
\[
 \deg f_w=n-s,
\]
where $2s$ is the syllable length of $w$.
\end{prop}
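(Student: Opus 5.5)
The plan is to deduce the statement directly from \cref{prop:top-homogeneous-exact-degree}, which gives $\deg f_w=n-R(w)$ for every nontrivial cyclically reduced word, where $R(w)=N_{ab}(w)+N_{b^{-1}a^{-1}}(w)$. First I would check that the hypotheses apply. A positive word in standard syllable form $w=a^{\alpha_1}b^{\beta_1}\cdots a^{\alpha_s}b^{\beta_s}$ with $s\ge1$ and all exponents positive contains no inverse letters. It is therefore freely reduced, and also cyclically reduced, since its first letter $a$ and last letter $b$ are not mutually inverse. Its length is $n=\sum_i\alpha_i+\sum_i\beta_i$.

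Next I would compute $R(w)$ by counting cyclic adjacencies. Since $w$ contains no $a^{-1}$ or $b^{-1}$, we have $N_{b^{-1}a^{-1}}(w)=0$, so $R(w)=N_{ab}(w)$. Reading the word cyclically, an adjacency $ab$ occurs exactly at the transition from the syllable $a^{\alpha_i}$ to the syllable $b^{\beta_i}$, for $i=1,\dots,s$. There is exactly one such transition per $i$, because each syllable is a maximal block of a single letter. Every other adjacency is $aa$, $bb$, or $ba$; the $ba$ adjacencies include the cyclic wrap from $b^{\beta_s}$ to $a^{\alpha_1}$. Hence $N_{ab}(w)=s$. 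As a consistency check, \cref{lem:type-change-count-balance} also gives $N_{ba}(w)=s$.

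Substituting into \cref{prop:top-homogeneous-exact-degree} gives $\deg f_w=n-s$. Since $2s$ is by definition the syllable length of $w$, this is the claim. As a byproduct, the top homogeneous part is $(-1)^{2s}x^{A-s}y^{B-s}z^{s}=x^{A-s}y^{B-s}z^s$, where $A=\sum_i\alpha_i$ and $B=\sum_i\beta_i$.

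There is no real obstacle. The only care needed is the cyclic bookkeeping: the wrap-around adjacency $b^{\beta_s}a^{\alpha_1}$ must be counted as a $ba$ adjacency and not as an $ab$ adjacency, so that exactly $s$ adjacencies of type $ab$ are counted.
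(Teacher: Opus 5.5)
Your proposal is correct and follows the paper's proof: it applies the exact degree formula of \cref{prop:top-homogeneous-exact-degree} and observes that $N_{ab}(w)=s$ (one $ab$ adjacency at the start of each $b$-syllable) while $N_{b^{-1}a^{-1}}(w)=0$. This gives $R(w)=s$ and hence $\deg f_w=n-s$. Your byproduct formula $[f_w]_{\rm top}=x^{A-s}y^{B-s}z^s$ is also correct.
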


\begin{proof}
For a positive cyclic word in standard syllable form, the cyclic adjacent pair $ab$
occurs exactly once at the beginning of each $b$-syllable, and no adjacent pair of type
$b^{-1}a^{-1}$ occurs.  Thus $R(w)=N_{ab}(w)+N_{b^{-1}a^{-1}}(w)=s$.  The exact formula
\eqref{eq:exact-degree-general} gives $\deg f_w=n-s$.
\end{proof}

\begin{ex}\label{ex:positive-degree-example}
For instance, the positive word
\[
 w=a^2b^3ab=a^2b^3a^1b^1
\]
has length \(n=7\) and has \(s=2\) alternating \(a\)- and \(b\)-syllable pairs.  Hence \cref{prop:positive-degree} gives
\[
 \deg f_w=n-s=5.
\]
This illustrates that, for positive words involving both generators, the degree is obtained from the ordinary length by subtracting the number of cyclic \(a\)-syllables, equivalently the number of cyclic \(b\)-syllables.
\end{ex}

\begin{rem}\label{rem:positive-proof-caution}
Thus the positive-word degree formula is now a direct specialization of the general
leading-term theorem.  In the positive case the number subtracted from the length is
exactly the number of cyclic occurrences of the transition from an $a$-block to a
$b$-block, equivalently the number of cyclic $a$-blocks and the number of cyclic
$b$-blocks.
\end{rem}

\begin{rem}\label{rem:OA-notation}
In the probabilistic estimates below the notation $O_D(\cdot)$ means that the
implicit multiplicative constant is allowed to depend on the fixed parameter $D>0$, but not on
$n$.  Thus a statement such as
\[
 X_n=O_D(\sqrt{n\log n})
 \quad\text{with probability at least }1-O_D(n^{-D})
\]
means that for every fixed $D>0$ there exist constants $C_D,C'_D>0$ such that,
for all sufficiently large $n$,
\[
 \Pr\bigl(|X_n|\le C_D\sqrt{n\log n}\bigr)
 \ge 1-C'_D n^{-D}.
\]
\end{rem}

The following is the standard bounded-differences inequality of McDiarmid; see McDiarmid's survey \cite[Section 3]{McDiarmid1989}.  We state only the special form needed below.

\begin{prop}[McDiarmid's bounded-differences inequality, special form]\label{prop:mcdiarmid-special}
Let $X_1,\ldots,X_n$ be independent random variables taking values in arbitrary sets, and let
\[
 F=F(X_1,\ldots,X_n)
\]
be a real-valued function of these variables.  Suppose that changing one coordinate can change the value of $F$ by at most $c_j$ in the following precise sense: for every $j$ and for every two inputs
\[
 (x_1,\ldots,x_n),\qquad (x'_1,\ldots,x'_n)
\]
which agree in all coordinates except possibly the $j$-th coordinate, one has
\[
 \bigl|F(x_1,\ldots,x_n)-F(x'_1,\ldots,x'_n)\bigr|\le c_j.
\]
Then for every $t>0$,
\[
 \Pr\bigl(|F-\mathbb E F|\ge t\bigr)
 \le
 2\exp\left(-\frac{2t^2}{\sum_{j=1}^n c_j^2}\right).
\]
In particular, if $c_j\le 2$ for every $j$, then
\[
 \Pr\bigl(|F-\mathbb E F|\ge t\bigr)
 \le
 2\exp\left(-\frac{t^2}{2n}\right).
\]
\end{prop}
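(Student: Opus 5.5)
The plan is the standard martingale proof of McDiarmid's inequality. Since the paper cites \cite[Section 3]{McDiarmid1989}, it would also suffice to refer there, but I would include a self-contained argument. First consider the Doob martingale of $F$ with respect to the filtration generated by the $X_i$. Put $\mathcal F_j=\sigma(X_1,\ldots,X_j)$ and $Z_j=\mathbb E[F\mid \mathcal F_j]$, so that $Z_0=\mathbb E F$ and $Z_n=F$. Write $F-\mathbb E F=\sum_{j=1}^n D_j$ with $D_j=Z_j-Z_{j-1}$; these satisfy $\mathbb E[D_j\mid\mathcal F_{j-1}]=0$.

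The key step is to show that, conditionally on $\mathcal F_{j-1}$, the increment $D_j$ lies in an interval of length at most $c_j$. By independence, $Z_j=g_j(X_1,\ldots,X_j)$ with
\[
 g_j(x_1,\ldots,x_j)=\mathbb E\,F(x_1,\ldots,x_j,X_{j+1},\ldots,X_n).
\]
Define $L_j=\inf_{x}g_j(X_1,\ldots,X_{j-1},x)-Z_{j-1}$ and $U_j=\sup_x g_j(X_1,\ldots,X_{j-1},x)-Z_{j-1}$. Then $L_j\le D_j\le U_j$, with $L_j,U_j$ both $\mathcal F_{j-1}$-measurable. Moreover $U_j-L_j\le c_j$, because for any two values $x,x'$ the difference $g_j(\ldots,x)-g_j(\ldots,x')$ is an average over the same independent tail of differences of $F$ at inputs differing only in coordinate $j$. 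Independence is exactly what lets the same tail distribution be used for both values; this is the one place the hypothesis enters. For arbitrary value sets I would handle the measurability of the sup/inf by simply working with the pairwise bound $|g_j(\ldots,x)-g_j(\ldots,x')|\le c_j$, which is all that Hoeffding's lemma needs.

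Next apply Hoeffding's lemma conditionally: a mean-zero random variable supported in an interval of length $\ell$ satisfies $\mathbb E e^{sD}\le e^{s^2\ell^2/8}$. This gives
\[
 \mathbb E[e^{sD_j}\mid\mathcal F_{j-1}]\le e^{s^2c_j^2/8}.
\]
Iterating the tower property yields $\mathbb E e^{s(F-\mathbb E F)}\le \exp\bigl(s^2\sum_j c_j^2/8\bigr)$. Markov's inequality then gives $\Pr(F-\mathbb E F\ge t)\le \exp\bigl(-st+s^2\sum_j c_j^2/8\bigr)$. Optimizing at $s=4t/\sum_j c_j^2$ gives the bound $\exp\bigl(-2t^2/\sum_j c_j^2\bigr)$. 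Applying the same bound to $-F$ and taking a union bound produces the factor $2$.

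Finally, for the special case, if $c_j\le 2$ for all $j$ then $\sum_j c_j^2\le 4n$. Hence $2t^2/\sum_j c_j^2\ge t^2/(2n)$, which is the stated inequality. The only step with real content is the conditional range bound for $D_j$ together with Hoeffding's lemma. I would either quote Hoeffding's lemma as standard or prove it via convexity of $e^{sx}$ on the interval followed by a second-order Taylor bound on the log-moment generating function.
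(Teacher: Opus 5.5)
Your proof is correct, and it is the standard Doob-martingale argument: each increment has conditional range of length at most $c_j$, Hoeffding's lemma bounds the conditional moment generating function, the Chernoff bound is optimized at $s=4t/\sum_j c_j^2$, a union bound gives the factor $2$, and the special case follows from $\sum_j c_j^2\le 4n$. The paper gives no proof of its own and simply cites McDiarmid's survey, whose argument is this same one; your observation that Hoeffding's lemma can be applied pointwise for each fixed prefix $(x_1,\ldots,x_{j-1})$ correctly handles the measurability of the $\sup$ and $\inf$.
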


\begin{cor}[Random positive words]\label{cor:random-positive-degree}
Let $0<p<1$, and let $w_n$ be a random positive word of length $n$ in the alphabet $\{a,b\}$, with independent letters satisfying
\[
 \Pr(a)=p,\qquad \Pr(b)=1-p.
\]
Then, for every $D>0$,  with probability at least $1-O_D(n^{-D})$ as $n\to\infty$ we have
\[
 \deg f_{w_n}=\bigl(1-p(1-p)\bigr)n+O_D(\sqrt{n\log n}).
\]
In particular, in the unbiased case $p=1/2$,
\[
 \deg f_{w_n}=\frac{3n}{4}+O_D(\sqrt{n\log n})
\]
with probability at least $1-O_D(n^{-D})$.
\end{cor}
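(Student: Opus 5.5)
The plan is to reduce the statement to a concentration estimate for a sum of weakly dependent indicators. First I handle the cyclic wrap-around. A random positive word $w_n$ is automatically freely reduced and, being positive, cyclically reduced, so \cref{prop:top-homogeneous-exact-degree} applies directly whenever $w_n$ is nontrivial. Since no inverse letters occur, $N_{b^{-1}a^{-1}}(w_n)=0$, and therefore
\[
 \deg f_{w_n}=n-N_{ab}(w_n),
\]
where $N_{ab}$ counts cyclic indices $i\in\{1,\dots,n\}$ with $u_i=a$, $u_{i+1}=b$ (indices mod $n$). This formula also holds in the one-generator case, where $N_{ab}=0$ and $\deg f_{w_n}=n$ by \cref{lem:chebyshev-power}. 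So the statement reduces to showing $N_{ab}(w_n)=p(1-p)n+O_D(\sqrt{n\log n})$ with probability $1-O_D(n^{-D})$.

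Next I compute the expectation. Write $N_{ab}=\sum_{i=1}^n I_i$ with $I_i=\mathbf 1[u_i=a,u_{i+1}=b]$. For $n\ge 2$ the letters $u_i,u_{i+1}$ are distinct independent coordinates, so $\mathbb E I_i=p(1-p)$ and $\mathbb E N_{ab}=p(1-p)n$ exactly.

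Then I apply \cref{prop:mcdiarmid-special} with $F=N_{ab}(u_1,\dots,u_n)$ as a function of the independent letters. Changing one letter $u_j$ affects only the indicators $I_{j-1}$ and $I_j$, so $c_j\le 2$. This gives
\[
 \Pr\bigl(|N_{ab}-p(1-p)n|\ge t\bigr)\le 2\exp(-t^2/(2n)).
\]
Taking $t=\sqrt{2Dn\log n}$ makes the right side $2n^{-D}$. Hence with probability at least $1-2n^{-D}$ we get $\deg f_{w_n}=(1-p(1-p))n+O_D(\sqrt{n\log n})$, and $p=1/2$ gives $3n/4$.

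The main point requiring care is the first step: verifying that the cyclic count in \cref{prop:top-homogeneous-exact-degree} is exactly $N_{ab}$ for every positive word, including the wrap-around pair $u_nu_1$ and the degenerate single-generator outcomes. That the wrap-around indicator is included changes neither the expectation nor the bounded differences, so no correction term arises. The rest is routine.
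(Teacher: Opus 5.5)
Your proof is correct and follows essentially the same route as the paper. The paper applies McDiarmid's inequality with $c_j\le 2$ to the number $T_n$ of cyclic $a$/$b$ transitions, which equals $2N_{ab}(w_n)$ for positive words, and then uses $\deg f_{w_n}=n-T_n/2$; you apply the same inequality directly to $N_{ab}(w_n)$ via the exact formula $\deg f_{w_n}=n-R(w_n)$ and treat the one-generator case the same way, so the difference is inessential.
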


\begin{proof}
Let 
\[
w_n=\xi_1\dots \xi_n
\]
be a random positive word of length $n$, where $\xi_1,\dots, \xi_n$ are i.i.d. random variables with values in $\{a,b\}$ and $\Pr(a)=p, \Pr(b)=1-p$.

Let $T_n$ be the number of cyclic transitions between $a$ and $b$ in $w_n$.
Since $0<p<1$, the cases $T_n=0$, where $w_n=a^n$ or $w_n=b^n$, have exponentially small probability in $n$. Thus we assume that $T_n>0$. 
Then $T_n$ is the syllable length of any cyclic permutation $w_n'$ of $w_n$ in standard syllable form, and
\[
 s=T_n/2
\]
is the number of $a$-power syllables and also the number of $b$-power syllables in $w_n'$.
Note that $T_n$ is exactly the number of indices $i$, counted cyclically, such that $\xi_i\ne \xi_{i+1}$. By independence, for each $i$ (counted cyclically), $\Pr(\xi_i\ne \xi_{i+1})=2p(1-p)$. Hence
\[
 \mathbb E[T_n]=2p(1-p)n.
\]

To obtain a convergence speed estimate we will use McDiarmid's bounded-differences inequality.

 Regard \(T_n\) as a function
\[
 F(\xi_1,\ldots,\xi_n)=T_n
\]
of the independent letters.  If one replaces the \(j\)-th letter \(\xi_j\) by  \(\xi_j'\), then at most two cyclic transitions adjacent to this letter can be affected in the computation of $T_n$.  Consequently the bounded-difference constant for each coordinate is at most \(2\):
\[
 |F(\xi_1,\ldots,\xi_j,\ldots,\xi_n)-F(\xi_1,\ldots,\xi'_j,\ldots,\xi_n)|\le 2.
\]
McDiarmid's inequality therefore gives, for every \(t>0\),
\[
 \Pr\left(|T_n-\mathbb E T_n|\ge t\right)
 \le
 2\exp\left(-\frac{2t^2}{\sum_{j=1}^n 2^2}\right)
 =
 2\exp\left(-\frac{t^2}{2n}\right).
\]
In particular, after choosing the constant \(C_D\) sufficiently large and taking
\[
 t=C_D\sqrt{n\log n},
\]
we get
\[
 \Pr\left(|T_n-2p(1-p)n|\ge C_D\sqrt{n\log n}\right)=O_D(n^{-D}).
\]

It remains to translate this transition estimate into a degree estimate.  If the cyclic word contains both letters, then the number of cyclic \(a\)-blocks equals the number of cyclic \(b\)-blocks and is exactly
\[
 s=T_n/2.
\]
If the word is all \(a\)'s or all \(b\)'s, then \(T_n=0\), and \cref{lem:chebyshev-power} gives \(\deg f_{w_n}=n\), which is again \(n-T_n/2\).  Thus in every case
\[
 \deg f_{w_n}=n-\frac{T_n}{2}.
\]
Combining this identity with the concentration estimate for \(T_n\) yields
\[
 \deg f_{w_n}
 =n-p(1-p)n+O_D(\sqrt{n\log n})
 =\bigl(1-p(1-p)\bigr)n+O_D(\sqrt{n\log n})
\]
with probability at least \(1-O_D(n^{-D})\), as claimed.

\end{proof}

\begin{conv}
For a freely reduced word $w\in F(a,b)$, let $\tau(w)$ be the unique maximal word
such that
\[
w=\tau(w) w^{\cyc}\tau(w)^{-1},
\]
where the product is freely reduced as written and where $w^{\cyc}$ is cyclically
reduced. We call $w^{\cyc}$ the \emph{cyclically reduced form of $w$.}
\end{conv}

The following statement is well known but we include a proof here for completeness.

\begin{lem}[Cyclic cancellation in a random freely reduced word]\label{lem:cyclic-cancellation-random}
Let
\[
 W_n=s_1s_2\cdots s_n
\]
be a random freely reduced word in $F(a,b)$ generated by the standard nonbacktracking Markov chain on
\(\{a,a^{-1},b,b^{-1}\}\): the first letter is uniform and each next letter is chosen uniformly among the three letters which are not the inverse of the current one.  Let $K_n=|\tau(W_n)|$, so that 
\[
|W_n^{\cyc}|=n-2K_n.
\]

Then, for every \(k\ge1\), we have
\begin{equation}\label{eq:cyclic-cancellation-tail}
 \Pr(K_n\ge k)\le 3^{1-k}.
\end{equation}
Consequently, for every \(D>0\), there is a constant \(C_D>0\) such that
\[
 \Pr(K_n>C_D\log n)=O_D(n^{-D}),
\]
and \(\mathbb E K_n=O(1)\).
\end{lem}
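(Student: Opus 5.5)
The plan is to reduce the event $\{K_n\ge k\}$ to a constraint on the first and last $k$ letters of $W_n$. Then we bound its probability by conditioning on the first letters and examining the Markov chain near the end of the word.

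\emph{Step 1: what the event says.} By definition of $\tau$, $K_n\ge k$ means that $2k\le n$ and $W_n$ has the form $\tau' v\tau'^{-1}$ with $|\tau'|=k$, read without cancellation. In letters, this is
\[
 s_{n+1-i}=s_i^{-1}\qquad\text{for } i=1,\dots,k .
\]
If $2k>n$ the probability is $0$. If $2k=n$, such a word would be freely reducible at the middle, since $s_k s_{k+1}=s_k s_k^{-1}$, so again the probability is $0$. We therefore assume $2k<n$.

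\emph{Step 2: conditional probability bound.} Condition on $s_1,\dots,s_k$, which fixes the required suffix $s_k^{-1}\cdots s_1^{-1}$. We need to show that, for any fixed target letters $t_1,\dots,t_k$ with $t_{j}\neq t_{j+1}^{-1}$, the probability that the last $k$ letters equal $t_1\cdots t_k$ is at most $3^{1-k}$.

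\begin{itemize}
\item Condition further on the chain up to position $n-k$.
\item The first target letter $s_{n-k+1}=t_1$ has conditional probability at most $1$. This crude bound is where the exponent $1-k$ rather than $-k$ comes from.
\item Each subsequent letter $s_{n-k+j}$, for $j=2,\dots,k$, is chosen uniformly among $3$ letters given its predecessor. The event that it equals the prescribed $t_j$ therefore has probability at most $1/3$.
\end{itemize}

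By the Markov property these factors multiply, giving $3^{-(k-1)}$. Integrating over the conditioning yields \eqref{eq:cyclic-cancellation-tail}.

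One point needs care. The positions $n-k+1,\dots,n$ lie strictly after position $k$ because $2k<n$, so conditioning on $s_1,\dots,s_k$ does not interfere with the uniform transitions at those positions. This independence is the only real subtlety, and I expect it to be the main (mild) obstacle.

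\emph{Step 3: consequences.} Taking $k=\lceil C_D\log n\rceil$ with $C_D\log 3\ge D+1$ gives
\[
 \Pr(K_n>C_D\log n)\le 3\cdot 3^{-C_D\log n}=O_D(n^{-D}).
\]
For the expectation, the tail-sum formula gives
\[
 \mathbb E K_n=\sum_{k\ge1}\Pr(K_n\ge k)\le\sum_{k\ge1}3^{1-k}=\tfrac32 ,
\]
so $\mathbb E K_n=O(1)$.
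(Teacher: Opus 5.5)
Your proof is correct and takes essentially the same approach as the paper. You reduce $\{K_n\ge k\}$ to the suffix condition $s_{n+1-i}=s_i^{-1}$, bound the probability of a prescribed reduced suffix by $1\cdot 3^{-(k-1)}$ using the Markov property, and finish with the tail-sum formula. The only differences are cosmetic: you treat the case $2k=n$ explicitly, and you choose $C_D$ with natural logarithms rather than taking $C_D=D+3$ with base-$3$ logarithms.
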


\begin{proof}
The event \(K_n\ge k\) means that the first \(k\) letters and the last \(k\) letters match in inverse reverse order:
\[
 s_{n-j+1}=s_j^{-1}\qquad 1\le j\le k,
\]
up to the harmless convention that the estimate is trivial when \(2k>n\).  Once the first \(k\) letters and the middle part of $W_n$ are fixed, the last \(k\) letters are uniquely determined.  A prescribed reduced suffix of length \(k\) has conditional probability at most \(3^{-(k-1)}\): the first letter of the suffix has probability at most \(1\), and each subsequent letter has conditional probability at most \(1/3\).  This proves \eqref{eq:cyclic-cancellation-tail}.

Taking, for instance, \(C_D=D+3\) with logarithms to base \(3\) gives
\[
 \Pr(K_n>(D+3)\log_3 n)\le 3n^{-(D+3)}=O_D(n^{-D}).
\]
The expectation bound follows from the tail-sum formula for nonnegative integer-valued random variables:
\[
 \mathbb E K_n=\sum_{k=1}^\infty \Pr(K_n\ge k)
 \le \sum_{k=1}^\infty 3^{1-k}<\infty,
\]
with the final constant independent of \(n\).
\end{proof}

\begin{rem}[Relation to standard genericity estimates]\label{rem:cyclic-cancellation-literature}
The exponential tail estimate in Lemma~\ref{lem:cyclic-cancellation-random} is included for completeness, but it should be regarded as a standard elementary feature of the nonbacktracking random-walk model for freely reduced words.  Closely related exponentially small exceptional-set estimates occur throughout the generic-case complexity and genericity literature for free groups and hyperbolic groups; see, for example, Kapovich--Myasnikov--Schupp--Shpilrain \cite{KMSS2003}, Kapovich--Schupp--Shpilrain \cite{KSS2006}, and Gilman \cite{Gilman2010}.  The novelty of the present application, if any, is not the cancellation estimate itself, but its use here to pass from a random freely reduced linear word to its cyclically reduced core when estimating the degree of the trace polynomial.
\end{rem}

\begin{lem}[Transition-count concentration in the nonbacktracking model]\label{lem:directed-transition-concentration}
Let
\[
 W_n=s_1s_2\cdots s_n
\]
be generated by the standard nonbacktracking model on
\(\{a,a^{-1},b,b^{-1}\}\), with the first letter uniform and each next letter chosen uniformly
among the three letters not inverse to the current one.  Let \(E\) be any set of
allowed directed pairs of letters and put
\[
 T_E(W_n)=\#\{1\le i<n:(s_i,s_{i+1})\in E\}.
\]
Then
\[
 \mathbb E T_E(W_n)=\frac{|E|(n-1)}{12}.
\]
Moreover, there are absolute constants \(c,C>0\), independent of \(E\) and \(n\), such that
for every \(t>0\),
\begin{equation}\label{eq:directed-transition-concentration}
 \Pr\left(\left|T_E(W_n)-\frac{|E|(n-1)}{12}\right|\ge t\right)
 \le C\exp\left(-\frac{c t^2}{n}\right).
\end{equation}
Consequently, for every \(D>0\),
\[
 T_E(W_n)=\frac{|E|n}{12}+O_D(\sqrt{n\log n})
\]
with probability at least \(1-O_D(n^{-D})\).
\end{lem}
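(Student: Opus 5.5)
The plan is to treat the two claims separately: the expectation is an exact stationarity computation, and the concentration bound comes from Azuma's inequality applied to a Doob martingale. McDiarmid's inequality (\cref{prop:mcdiarmid-special}) does not apply directly, because the letters $s_i$ are not independent. The high-probability consequence then follows by taking $t=C_D\sqrt{n\log n}$ and absorbing $|E|/12\le 1$ into the error term.

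\emph{Expectation.} Let $P$ be the $4\times 4$ transition matrix of the chain. Write $J$ for the all-ones matrix and $S$ for the permutation matrix of the involution $r\mapsto r^{-1}$. Then
\[
 P=\tfrac13(J-S).
\]
$P$ is doubly stochastic, so the uniform distribution on $\{a,a^{-1},b,b^{-1}\}$ is stationary. Since $s_1$ is uniform, every $s_i$ is uniform. Hence each pair $(s_i,s_{i+1})$ is uniformly distributed over the $12$ allowed directed pairs, each having probability $\frac14\cdot\frac13=\frac1{12}$. Linearity of expectation then gives $\mathbb E T_E(W_n)=|E|(n-1)/12$.

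\emph{Mixing.} We have $JS=SJ=J$ and $J^2=4J$. On the vector $\mathbf 1$, $P$ acts by $1$. On $\mathbf 1^\perp$, $J$ vanishes and $S$ has eigenvalues $\pm1$, so $P$ has eigenvalues $\mp\frac13$ there. Since $P$ is symmetric, it follows that
\[
 \Bigl|P^m(r,r')-\tfrac14\Bigr|\le 3^{-m}\quad\text{for all letters } r,r' \text{ and all } m\ge0.
\]
Consequently, for $i\ge k$, the conditional probability
\[
 \Pr\bigl((s_i,s_{i+1})\in E\mid s_k=r\bigr)=\sum_{r'}P^{i-k}(r,r')\,\Pr\bigl((r',s_{i+1})\in E\mid s_i=r'\bigr)
\]
differs between any two values of $r$ by at most $C\,3^{-(i-k)}$, for an absolute constant $C$.

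\emph{Concentration.} Form the Doob martingale $M_k=\mathbb E[T_E(W_n)\mid s_1,\dots,s_k]$ for $0\le k\le n$, with $M_0=\mathbb E T_E$ and $M_n=T_E$. By the Markov property, $M_k-M_{k-1}$ only involves the indicator terms with $i\ge k-1$.
\begin{itemize}
\item The term with $i=k-1$ contributes at most $1$.
\item The terms with $i\ge k$ depend on $s_1,\dots,s_k$ only through $s_k$. Revealing $s_k$ given $s_{k-1}$ therefore changes each such term by at most $C\,3^{-(i-k)}$, using the mixing bound above.
\end{itemize}
Summing the geometric series gives $|M_k-M_{k-1}|\le c_0$ for an absolute constant $c_0$, independent of $E$ and $n$. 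Azuma--Hoeffding then yields
\[
 \Pr\bigl(|T_E-\mathbb E T_E|\ge t\bigr)\le 2\exp\!\left(-\frac{t^2}{2c_0^2n}\right),
\]
which is \eqref{eq:directed-transition-concentration}.

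\emph{Main obstacle.} The only delicate step is the uniform bound on the martingale differences, that is, controlling how the conditional law of the future depends on the current letter. The explicit spectral decomposition of $P=\frac13(J-S)$ handles this cleanly. As a fallback, one can observe that $P^2=\frac19(2J+I)$ has all entries positive, which gives a Doeblin minorization and the same geometric decay.
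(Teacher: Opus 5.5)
Your proof is correct and follows the same route as the paper. Both use stationarity of the uniform distribution for the mean, then a Doob martingale whose increments are uniformly bounded by geometric decay in the dependence on the current letter, then Azuma--Hoeffding with $t=C_D\sqrt{n\log n}$. The only difference is the mixing input: you get the exact rate $3^{-m}$ from the spectral decomposition $P=\frac13(J-S)$, whereas the paper uses a coupling built from positivity of the two-step transition matrix, which is precisely your stated fallback; your version has the small advantage of producing explicit constants.
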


\begin{proof}
The expectation follows from stationarity.  The uniform distribution on
\(\{a,a^{-1},b,b^{-1}\}\) is stationary for the nonbacktracking transition matrix, and each
allowed directed pair therefore has probability \(1/4\cdot 1/3=1/12\) at each
linear position.

We next recall the elementary bounded-differences concentration estimate for this
finite uniformly mixing Markov chain; the following paragraph is a direct
Doob-martingale proof using an exponential coupling.  Let
\(X_1,X_2,\ldots\) be this four-state Markov chain and let
\[
 F=\sum_{i=1}^{n-1} g(X_i,X_{i+1}),
 \qquad 0\le g\le 1.
\]
For this chain the two-step transition matrix has all entries bounded below by a
positive absolute constant, say \(\delta>0\).  Hence two copies of the chain started
at different states can be coupled so that, after each two-step block before they
meet, they have probability at least \(4\delta\) of meeting, and after meeting they
move together.  Thus the probability that the two coupled chains still disagree
after \(2m\) further steps is at most \((1-4\delta)^m\).  It follows that changing
the state revealed at a given time can change the conditional expectation of the
future contribution to \(F\) by at most an absolute constant: the possible
discrepancy at distance \(k\) is bounded by a geometric sequence in
\(\lfloor k/2\rfloor\).  Thus the Doob martingale
\[
 M_j=\mathbb E(F\mid X_1,
 \ldots,X_j),\qquad 0\le j\le n,
\]
has increments bounded in absolute value by an absolute constant \(C_0\), independent
of \(n\) and of \(g\).  Azuma--Hoeffding gives
\[
 \Pr(|F-\mathbb E F|\ge t)
 \le 2\exp\left(-\frac{t^2}{2C_0^2 n}\right).
\]
Applying this with \(g\) equal to the indicator of \(E\) proves
\eqref{eq:directed-transition-concentration}.  The final high-probability form follows
by taking \(t=C_D\sqrt{n\log n}\) with \(C_D\) sufficiently large.
\end{proof}

\begin{prop}[Random freely reduced words]\label{prop:random-freely-reduced-degree}
Let $W_n$ be a random freely reduced word of length $n$ in $F(a,b)$ in the standard
nonbacktracking model, and let $W_n^{\cyc}$ be its cyclically reduced form.  Put
\[
 N_n=|W_n^{\cyc}|.
\]
Then for every $D>0$ there is a constant $C_D>0$ such that, with probability at least
$1-O_D(n^{-D})$,
\[
 N_n=n-O_D(\log n),
\]
and
\[
 \deg f_{W_n}=\deg f_{W_n^{\cyc}}
 =\frac{5}{6}N_n+O_D(\sqrt{n\log n}).
\]
Moreover, after possibly increasing $C_D$,
\[
 \deg f_{W_n}=\frac{5n}{6}+O_D(\sqrt{n\log n})
\]
with probability at least $1-O_D(n^{-D})$.
\end{prop}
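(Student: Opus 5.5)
The plan is to combine the exact degree formula of \cref{prop:top-homogeneous-exact-degree} with the cancellation estimate of \cref{lem:cyclic-cancellation-random} and the transition-count concentration of \cref{lem:directed-transition-concentration}. Since $f_{W_n}=f_{W_n^{\cyc}}$ by conjugation invariance, the first equality $\deg f_{W_n}=\deg f_{W_n^{\cyc}}$ is immediate. \Cref{lem:cyclic-cancellation-random} gives $K_n\le C_D\log n$ with probability $1-O_D(n^{-D})$, hence $N_n=n-2K_n=n-O_D(\log n)$. In particular $N_n\ge 1$ with high probability, so $W_n^{\cyc}$ is nontrivial and \cref{prop:top-homogeneous-exact-degree} applies: $\deg f_{W_n}=N_n-R(W_n^{\cyc})$ with $R=N_{ab}+N_{b^{-1}a^{-1}}$ counted cyclically.

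Next I compare $R(W_n^{\cyc})$ with the linear count $T_E(W_n)$ for $E=\{(a,b),(b^{-1},a^{-1})\}$, so $|E|=2$. The word $W_n^{\cyc}=s_{K_n+1}\cdots s_{n-K_n}$ is a subword of $W_n$. Its linear adjacencies are adjacencies of $W_n$, and it has one extra cyclic wraparound adjacency. Hence
\[
|R(W_n^{\cyc})-T_E(W_n)|\le 2K_n+1,
\]
because at most $2K_n$ adjacencies of $W_n$ are lost when the prefix and suffix are deleted, and one cyclic adjacency is added. By \cref{lem:directed-transition-concentration}, $T_E(W_n)=\frac{2n}{12}+O_D(\sqrt{n\log n})=\frac n6+O_D(\sqrt{n\log n})$ with probability $1-O_D(n^{-D})$. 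A union bound over the two good events then gives
\[
\deg f_{W_n}=N_n-\tfrac n6+O_D(\sqrt{n\log n})+O_D(\log n).
\]
Since $n=N_n+O_D(\log n)$, this equals both $\frac56 N_n+O_D(\sqrt{n\log n})$ and $\frac{5n}{6}+O_D(\sqrt{n\log n})$, and the $\log n$ terms are absorbed into the $\sqrt{n\log n}$ error.

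The main obstacle is bookkeeping rather than anything substantive: I have to make the constants $C_D$ uniform and intersect the failure events properly. The edge case $N_n=0$, or $W_n^{\cyc}$ being a single-generator power, needs no special treatment. The event $N_n=0$ is contained in the exceptional event $K_n\ge n/2$, which has exponentially small probability. For a generator power, \cref{prop:top-homogeneous-exact-degree} still applies with $R=0$, consistent with \cref{lem:chebyshev-power}.
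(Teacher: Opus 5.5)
Your proposal is correct and follows essentially the same route as the paper. Both arguments use conjugacy invariance, the tail bound of \cref{lem:cyclic-cancellation-random}, \cref{lem:directed-transition-concentration} with $E=\{(a,b),(b^{-1},a^{-1})\}$, and the exact formula $\deg f_w=n-R(w)$; your explicit bound $|R(W_n^{\cyc})-T_E(W_n)|\le 2K_n+1$ just makes precise the paper's ``$O_D(\log n)+1$'' step. One minor remark: the case $N_n=0$ cannot occur at all for $n\ge1$, since a nontrivial freely reduced word is never conjugate to the identity, so you do not need a probabilistic exclusion there.
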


\begin{proof}
The equality $\deg f_{W_n}=\deg f_{W_n^{\cyc}}$ holds because cyclic reduction
conjugates the represented group element and trace polynomials are conjugacy-invariant.
By \cref{lem:cyclic-cancellation-random}, with probability at least
$1-O_D(n^{-D})$ we have $N_n=n-O_D(\log n)$.

Let
\[
 R_n^{\rm lin}=\#\{1\le i<n:(s_i,s_{i+1})=(a,b)\text{ or }(b^{-1},a^{-1})\}
\]
for the original freely reduced word $W_n=s_1\cdots s_n$.  Applying
\cref{lem:directed-transition-concentration} to the two-element set
\[
 E=\{(a,b),(b^{-1},a^{-1})\}
\]
gives
\[
 \mathbb E R_n^{\rm lin}=\frac{n-1}{6}
\]
and, for every $D>0$,
\begin{equation}\label{eq:R-linear-concentration}
 R_n^{\rm lin}=\frac{n}{6}+O_D(\sqrt{n\log n})
\end{equation}
with probability at least $1-O_D(n^{-D})$.

Cyclic reduction removes $O_D(\log n)$ letters at the two ends with the same high
probability, and replacing the resulting linear word by the cyclic word changes the
count $R=N_{ab}+N_{b^{-1}a^{-1}}$ by at most $O_D(\log n)+1$.  Hence
\[
 R(W_n^{\cyc})=\frac{N_n}{6}+O_D(\sqrt{n\log n})
\]
with probability at least $1-O_D(n^{-D})$.  The exact degree formula
\eqref{eq:exact-degree-general} now gives
\[
 \deg f_{W_n}=\deg f_{W_n^{\cyc}}
 =N_n-R(W_n^{\cyc})
 =\frac{5}{6}N_n+O_D(\sqrt{n\log n}).
\]
Since $N_n=n-O_D(\log n)$, the final estimate follows after absorbing the logarithmic
term into $O_D(\sqrt{n\log n})$.
\end{proof}

\begin{cor}[Uniformly random cyclically reduced words]\label{cor:random-cyclically-reduced-degree}
Let $w_n$ be chosen uniformly from cyclically reduced words of length $n$ in $F(a,b)$.
Then for every $D>0$ there is a constant $C_D>0$ such that
\[
 \deg f_{w_n}=\frac{5n}{6}+O_D(\sqrt{n\log n})
\]
with probability at least $1-O_D(n^{-D})$.
\end{cor}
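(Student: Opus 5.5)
The plan is to deduce the corollary directly from the exact degree formula \eqref{eq:exact-degree-general}: for a cyclically reduced word $w$ of length $n$ we have $\deg f_w=n-R(w)$. It therefore suffices to show that $R(w_n)=n/6+O_D(\sqrt{n\log n})$ with probability at least $1-O_D(n^{-D})$ when $w_n$ is uniform among cyclically reduced words of length $n$. I will get this by comparing the uniform cyclically reduced model with the nonbacktracking model of \cref{lem:directed-transition-concentration}, conditioned on an event of probability bounded below.

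First, I claim that if $W_n=s_1\cdots s_n$ is generated by the nonbacktracking chain, then its conditional law, given the event $\mathcal C_n=\{s_n\ne s_1^{-1}\}$, is uniform on cyclically reduced words of length $n$. Indeed, every freely reduced word of length $n$ has probability exactly $\frac14\cdot 3^{-(n-1)}$ in this model, so conditioning on any set of freely reduced words gives the uniform distribution on that set. The next step is to bound $\Pr(\mathcal C_n)$ from below by an absolute constant. By \cref{lem:cyclic-cancellation-random} with $k=1$ the bound $3^{0}=1$ is useless, so I will argue directly: conditioned on $s_1,\dots,s_{n-1}$, the letter $s_n$ is uniform among three letters, at most one of which equals $s_1^{-1}$. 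Hence $\Pr(\mathcal C_n)\ge 2/3$ for $n\ge 2$, and the case $n=1$ is trivial.

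Then, for any event $\mathcal B$ we have $\Pr(w_n\in\mathcal B)=\Pr(W_n\in\mathcal B\mid\mathcal C_n)\le \tfrac32\Pr(W_n\in\mathcal B)$. I will take $\mathcal B$ to be the bad event on which the linear count $R^{\rm lin}_n=\#\{i<n:(s_i,s_{i+1})\in\{(a,b),(b^{-1},a^{-1})\}\}$ deviates from $n/6$ by more than $C_D\sqrt{n\log n}$. By \eqref{eq:R-linear-concentration}, obtained from \cref{lem:directed-transition-concentration} with $|E|=2$, this event has probability $O_D(n^{-D})$ in the nonbacktracking model, and therefore also in the uniform cyclically reduced model. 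For a cyclically reduced word, the cyclic count $R(w)$ differs from the linear count by at most $1$, namely by the wraparound pair $(s_n,s_1)$. So $R(w_n)=n/6+O_D(\sqrt{n\log n})$, and hence $\deg f_{w_n}=5n/6+O_D(\sqrt{n\log n})$ with the required probability.

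I expect the main obstacle to be the justification that conditioning the nonbacktracking chain on $\mathcal C_n$ yields exactly the uniform cyclically reduced distribution. This rests on the observation that the nonbacktracking model is uniform on freely reduced words of each fixed length, which I will state and verify explicitly. Everything else is the constant-factor transfer of the tail bound; the factor $3/2$ is harmlessly absorbed into the $O_D$ constants.
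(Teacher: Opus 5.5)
Your proposal is correct and follows essentially the same route as the paper. You realize the uniform cyclically reduced law as the nonbacktracking chain conditioned on $s_n\ne s_1^{-1}$, transfer the concentration of the linear count $R^{\rm lin}_n$ from \cref{lem:directed-transition-concentration} at the cost of a constant factor, absorb the single wraparound adjacency, and apply $\deg f_w=n-R(w)$. The one difference is that you bound $\Pr(\mathcal C_n)\ge 2/3$ by a one-line conditioning argument on the last letter, whereas the paper invokes Rivin's exact count to get $\Pr(\mathcal C_n)=3/4+O(3^{-n})$; your version is self-contained and suffices equally well.
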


\begin{proof}
Choose a freely reduced word in the standard nonbacktracking model; this is the uniform distribution on freely reduced words of length $n$.  Condition on
the event that its last letter is not the inverse of its first letter.  This event is
exactly the event that the freely reduced word is cyclically reduced, and the conditional distribution is uniform on cyclically reduced words of length $n$.  Let $\mathcal R_n$ and $\mathcal C_n$ denote the sets of freely reduced and cyclically reduced words of length $n$, respectively.  Rivin's exact count \cite[Theorem~1.1]{Rivin2010}, specialized to rank two, gives
\[
 |\mathcal C_n|=3^n+2+(-1)^n,
 \qquad
 |\mathcal R_n|=4\cdot 3^{n-1}.
\]
Thus
\[
 \Pr(W_n\in\mathcal C_n)=\frac{|\mathcal C_n|}{|\mathcal R_n|}
 =\frac34+O(3^{-n}),
\]
and in particular this probability is bounded below by a positive absolute constant independent of $n$.  Conditioning therefore changes only the constants in the preceding concentration
estimates.  The count $R=N_{ab}+N_{b^{-1}a^{-1}}$ for the cyclic word differs from the linear
count only by the final last--first adjacency, so
\[
 R(w_n)=\frac{n}{6}+O_D(\sqrt{n\log n})
\]
with probability at least $1-O_D(n^{-D})$.  The exact formula
$\deg f_{w_n}=n-R(w_n)$ gives the claim.
\end{proof}

\begin{rem}\label{rem:empirical-degree}
The exact formula above explains the empirical ratios near $0.8$--$0.85$ observed in
small random freely reduced and cyclically reduced computations: for the uniform cyclically reduced model the
asymptotic degree ratio is $5/6$.
\end{rem}


\section{The constant term}

\begin{thm}[Constant term]\label{thm:constant-term}
For every word $w\in F(a,b)$,
\[
 f_w(0,0,0)\in\{-2,0,2\}.
\]
In particular, the constant term is uniformly bounded independently of $|w|_{\mathcal X}$.
\end{thm}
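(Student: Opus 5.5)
The idea is to realize the point $(x,y,z)=(0,0,0)$ by an explicit pair of matrices and then read off the trace of every word directly. By \cref{thm:fricke-existence}, for any $A,B\in\SL(2,\C)$ with $\Tr A=\Tr B=\Tr(AB)=0$ we have $f_w(0,0,0)=\Tr w(A,B)$. So it suffices to exhibit such a pair generating a finite group in which every element has trace in $\{-2,0,2\}$.

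I will take the quaternion matrices
\[
 A=\begin{pmatrix} i&0\\0&-i\end{pmatrix},\qquad
 B=\begin{pmatrix} 0&1\\-1&0\end{pmatrix}.
\]
Both have determinant $1$ and trace $0$. Their product is
\[
 AB=\begin{pmatrix}0&i\\ i&0\end{pmatrix},
\]
which also has trace $0$. These satisfy $A^2=B^2=(AB)^2=-I$, so they generate the quaternion group $Q_8=\{\pm I,\pm A,\pm B,\pm AB\}$.

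The next step is to check that every element of $Q_8$ has trace in $\{-2,0,2\}$. The elements $\pm I$ have trace $\pm 2$. The remaining six elements $\pm A,\pm B,\pm AB$ all have trace $0$, as shown above.

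Since $w(A,B)\in Q_8$ for every word $w$, we get $f_w(0,0,0)=\Tr w(A,B)\in\{-2,0,2\}$. There is no real obstacle here beyond finding the right specialization. One could also make the argument more precise: $\Tr w(A,B)=\pm 2$ exactly when the exponent sums of $a$ and $b$ in $w$ are both even, with the sign given by the image in $Q_8$. That refinement is not needed for the statement.

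An alternative would be induction via the recursion \eqref{eq:main-recursion} evaluated at $0$. That route would require tracking which values occur and is messier, so I will use the $Q_8$ specialization.
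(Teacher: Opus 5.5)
Your proposal is correct and is essentially the paper's own proof: the paper uses exactly these matrices $\mathbf i=\mathrm{diag}(i,-i)$, $\mathbf j=\begin{pmatrix}0&1\\-1&0\end{pmatrix}$, $\mathbf k=\mathbf i\mathbf j$. It concludes in the same way, since every element of $Q_8\subset\SL(2,\C)$ has trace in $\{-2,0,2\}$. Your side remark is also right: $w(A,B)\in\{\pm I\}$ exactly when both exponent sums are even, because $Q_8/\{\pm I\}\cong(\Z/2)^2$.
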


\begin{proof}
Let $w\in F(a,b)$ be an arbitrary freely reduced word.

We use the following explicit standard embedding of the quaternion group
\[
 Q_8=\{\pm 1,\pm \mathbf i,\pm \mathbf j,\pm \mathbf k\}
\]
into $\SL(2,\C)$.  Let
\[
 I_2=\begin{pmatrix}1&0\\0&1\end{pmatrix},\qquad
 \mathbf i=
 \begin{pmatrix}
  i&0\\
  0&-i
 \end{pmatrix},
 \qquad
 \mathbf j=
 \begin{pmatrix}
  0&1\\
  -1&0
 \end{pmatrix},
\]
and put
\[
 \mathbf k=\mathbf i\mathbf j=
 \begin{pmatrix}
  0&i\\
  i&0
 \end{pmatrix}.
\]
These matrices all have determinant $1$, and a direct multiplication gives
\[
 \mathbf i^2=\mathbf j^2=\mathbf k^2=-I_2,
 \qquad
 \mathbf i\mathbf j=\mathbf k=-\mathbf j\mathbf i.
\]
Thus the matrices $\{\pm I_2, \pm \mathbf i, \pm \mathbf j, \pm \mathbf k\}$  realize $Q_8$ as a subgroup of $\SL(2,\C)$.  Moreover
\[
 \Tr(\mathbf i)=\Tr(\mathbf j)=\Tr(\mathbf k)=0.
\]
In particular, for the homomorphism
\[
 \rho_0:F(a,b)\to \SL(2,\C),
 \qquad
 \rho_0(a)=\mathbf i,
 \qquad
 \rho_0(b)=\mathbf j,
\]
we have
\[
 \Tr(\rho_0(a))=0,
 \qquad
 \Tr(\rho_0(b))=0,
 \qquad
 \Tr(\rho_0(ab))=\Tr(\mathbf i\mathbf j)=\Tr(\mathbf k)=0.
\]
Therefore, by the defining property of the trace polynomial,
\[
 \Tr(\rho_0(w))=f_w(\Tr \mathbf i,\Tr \mathbf j,\Tr(\mathbf i\mathbf j))=f_w(0,0,0).
\]
The image of $\rho_0$ is contained in the eight-element subgroup
\[
 \{\pm I_2,\pm\mathbf i,\pm\mathbf j,\pm\mathbf k\}.
\]
For these matrices,
\[
 \Tr(\pm I_2)=\pm2,
 \qquad
 \Tr(\pm\mathbf i)=\Tr(\pm\mathbf j)=\Tr(\pm\mathbf k)=0.
\]
Hence $f_w(0,0,0)\in\{-2,0,2\}$.
\end{proof}

\section{Coefficient growth}\label{sec:coefficient-growth}

Recall that, using the notations from the Introduction,  for $w\in F(a,b)$
\[
\norm{f_w}_\infty=\max_{i,j,k}\left|[x^i y^j z^k]f_w(x,y,z)\right|
\]
and
\[
 \norm{f_w}_1=\sum_{i,j,k}\left|[x^i y^j z^k]f_w(x,y,z)\right|.
\]

We also denote 
\[
\varphi=\frac{1+\sqrt5}{2},
\]
the golden ratio, and put
\[
 \varphi_0=\sqrt{2\varphi^2+1}.
\]
Thus
\[
1.6<\varphi<\varphi_0<2.5.
\]

\begin{thm}[A universal exponential upper bound]\label{thm:coefficient-upper}
Let \(\varphi_0=\sqrt{2\varphi^2+1}\).  For every \(w\in F(a,b)\), put
\(n=\|w\|_{\mathcal X}\).  Then
\[
 \norm{f_w}_\infty\le 2\varphi_0^n
\]
and
\[
 \norm{f_w}_1\le 2\binom{n+3}{3}\varphi_0^n.
\]
\end{thm}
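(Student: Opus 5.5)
The plan is to bound the coefficients of $f_w$ by its maximum modulus on the unit torus $|x|=|y|=|z|=1$ (Cauchy's estimate), and to bound that maximum by realizing every point of the torus as $(\Tr A,\Tr B,\Tr AB)$ for a pair $A,B\in\SL(2,\C)$ of controlled operator norm. By \cref{prop:basicP}(1) we may replace $w$ by a cyclically reduced conjugate, so that $w$ is a word of length exactly $n=\|w\|_{\mathcal X}$ in $a^{\pm1},b^{\pm1}$.

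\textbf{Step 1 (matrices for a torus point).} Fix $(x,y,z)$ with $|x|=|y|=|z|=1$. Put
\[
 A=\begin{pmatrix}x&-1\\ 1&0\end{pmatrix},\qquad
 B=\begin{pmatrix}0&c\\ -c^{-1}&y\end{pmatrix},
\]
where $c$ is a root of $c^2-zc+1=0$. Since the product of the two roots is $1$, $c\neq0$. Then $\det A=\det B=1$, $\Tr A=x$ and $\Tr B=y$. Moreover $AB$ is upper triangular with diagonal entries $c^{-1}$ and $c$, so $\Tr AB=c+c^{-1}=z$.

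\textbf{Step 2 (norm bounds).} For $M\in\SL(2,\C)$ the singular values are $\sigma$ and $\sigma^{-1}$, so $\|M^{-1}\|_{\rm op}=\|M\|_{\rm op}=\sigma$, with $\sigma\ge1$ and
\[
 \sigma^2+\sigma^{-2}=\|M\|_F^2 .
\]
\begin{itemize}
\item For $A$ we have $\|A\|_F^2=|x|^2+2=3$. This gives $\sigma^2=(3+\sqrt5)/2=\varphi^2$, so $\|A^{\pm1}\|_{\rm op}=\varphi$.
\item For $B$ write $c=re^{i\theta}$. Then
\[
 1=|c+c^{-1}|^2=r^2+r^{-2}+2\cos2\theta ,
\]
so $r^2+r^{-2}\le3$. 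Hence $\|B\|_F^2=r^2+r^{-2}+|y|^2\le4$, which gives $\sigma^2\le2+\sqrt3<2\varphi^2+1$.
\end{itemize}
So every letter matrix $A^{\pm1}$, $B^{\pm1}$ has operator norm at most $\varphi_0$.

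\textbf{Step 3 (trace bound).} By submultiplicativity, $\|w(A,B)\|_{\rm op}\le\varphi_0^n$. A $2\times2$ matrix has trace of modulus at most twice its operator norm, so
\[
 |f_w(x,y,z)|=|\Tr w(A,B)|\le 2\varphi_0^n
\]
on the whole unit torus.

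\textbf{Step 4 (Cauchy estimate).} For a polynomial $f$ we have
\[
 [x^iy^jz^k]f=\int_{\mathbb T^3} f\,\bar x^{\,i}\bar y^{\,j}\bar z^{\,k}\,d\mu ,
\]
where $\mu$ is the Haar measure on $\mathbb T^3$. Each coefficient therefore has modulus at most $\max_{\mathbb T^3}|f|$, which gives $\norm{f_w}_\infty\le2\varphi_0^n$.

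\textbf{Step 5 ($\ell_1$ bound).} By \cref{prop:degree-upper}, $\deg f_w\le n$. The number of monomials in $x,y,z$ of total degree at most $n$ is $\binom{n+3}{3}$. Summing the $\ell_\infty$ bound over these monomials gives $\norm{f_w}_1\le2\binom{n+3}{3}\varphi_0^n$.

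The main obstacle is Step 1/2: realizing an \emph{arbitrary} torus point with uniformly bounded norms. The difficulty is that the root $c$ of $c^2-zc+1=0$ need not lie on the unit circle. The identity $r^2+r^{-2}=1-2\cos2\theta\le3$ is what controls $|c|$, and it is where the hypothesis $|z|=1$ is used. The rest is routine.
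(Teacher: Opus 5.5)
Your proof is correct and follows essentially the same route as the paper: the same matrices $A,B$ (your $B$ is the paper's with $q=c$, $r=-c^{-1}$), a submultiplicative norm bound on the unit torus, Cauchy's coefficient estimate, and the count of $\binom{n+3}{3}$ monomials. The only difference is in the norm step. You use the operator norm together with the exact control $|c|^2+|c|^{-2}\le 3$, whereas the paper uses the Frobenius norm with the cruder bound $|q|,|r|\le\varphi$. Your version in fact shows that every letter matrix has norm at most $\sqrt{2+\sqrt3}<\varphi_0$, so it yields a better exponential base than the one stated.
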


\begin{proof}
Since trace polynomials are conjugacy-invariant, we may replace \(w\) by a cyclically
reduced conjugate and assume that \(|w|_{\mathcal X}=n\).
Let $(x,y,z)\in(\C^*)^3$ satisfy
\[
 |x|=|y|=|z|=1.
\]
We first choose explicit matrices $A,B\in\SL(2,\C)$ whose three basic traces are exactly $x,y,z$.  Set
\[
 A=\begin{pmatrix}x&-1\\1&0\end{pmatrix}.
\]
Then $\det A=1$ and $\Tr A=x$.  Next choose $q,r\in\C$ satisfying
\[
 q-r=z,
 \qquad
 qr=-1.
\]
Equivalently, $r$ is a root of
\[
 r^2+zr+1=0
\]
and then $q=z+r$.  Put
\[
 B=\begin{pmatrix}0&q\\r&y\end{pmatrix}.
\]
Then
\[
 \det B=-qr=1,
 \qquad
 \Tr B=y.
\]
Moreover
\[
 AB=\begin{pmatrix}-r&xy-q\\0&q\end{pmatrix},
\]
so
\[
 \Tr(AB)=q-r=z.
\]
Therefore, by the defining property of the trace polynomial,
\[
 f_w(x,y,z)=\Tr(w(A,B))
\]
for this particular pair $A,B$.

We now estimate this trace.  Since $|z|=1$ and $r$ satisfies $r^2+zr+1=0$, the quadratic formula gives
\[
 r=\frac{-z\pm\sqrt{z^2-4}}2.
\]
Using $|z|=1$ and $|z^2-4|\le 5$, one obtains the crude bound
\[
 |r|\le \frac{1+\sqrt5}{2}=\varphi.
\]
Since $q=z+r$, the number $-q$ is the other root of $u^2+zu+1=0$.  The same bound therefore gives
\[
 |q|,|r|\le\varphi.
\]
We use the Frobenius norm
\[
 \|M\|_F=\left(\sum_{i,j}|m_{ij}|^2\right)^{1/2}.
\]
It is submultiplicative:
\[
 \|MN\|_F\le \|M\|_F\|N\|_F.
\]
For our matrix $A$ we have
\[
 A^{-1}=\begin{pmatrix}0&1\\-1&x\end{pmatrix},
\]
and hence, since $|x|=1$,
\[
 \|A\|_F=\|A^{-1}\|_F=\sqrt{|x|^2+1+1}=\sqrt3.
\]
For $B$ one has
\[
 B^{-1}=\begin{pmatrix}y&-q\\-r&0\end{pmatrix},
\]
and therefore, since $|y|=1$ and $|q|,|r|\le\varphi$,
\[
 \|B\|_F,\ \|B^{-1}\|_F
 \le \sqrt{1+2\varphi^2}=\varphi_0.
\]
Also $\sqrt{3}<\varphi_0$, because $\varphi_0^2=1+2\varphi^2>3$.  Hence every letter $a^{\pm1},b^{\pm1}$ in the word $w$ is evaluated by a matrix of Frobenius norm at most $\varphi_0$.  If $|w|=n$, submultiplicativity gives
\[
 \|w(A,B)\|_F\le \varphi_0^n.
\]
Finally, for a $2\times2$ matrix $M$,
\[
 |\Tr M|=|m_{11}+m_{22}|\le |m_{11}|+|m_{22}|\le \sqrt2\,(|m_{11}|^2+|m_{22}|^2)^{1/2}\le 2\|M\|_F.
\]
Thus, on the unit three-torus,
\begin{equation}\label{eq:torus-bound-for-cauchy}
 |f_w(x,y,z)|=|\Tr(w(A,B))|\le 2\varphi_0^n.
\end{equation}

We now pass from the uniform bound \eqref{eq:torus-bound-for-cauchy} to a coefficient bound.  Write
\[
 f_w(x,y,z)=\sum_{i,j,k\ge0} c_{ijk}x^iy^jz^k.
\]
Cauchy's coefficient formula in three variables says that
\[
 c_{ijk}=\frac{1}{(2\pi i)^3}
 \int_{|\xi|=1}\int_{|\eta|=1}\int_{|\zeta|=1}
 \frac{f_w(\xi,\eta,\zeta)}{\xi^{i+1}\eta^{j+1}\zeta^{k+1}}
 \,d\zeta\,d\eta\,d\xi.
\]
Since $|\xi|=|\eta|=|\zeta|=1$, the denominator has absolute value $1$.  Taking absolute values and using \eqref{eq:torus-bound-for-cauchy} gives
\[
 |c_{ijk}|\le \sup_{|\xi|=|\eta|=|\zeta|=1}|f_w(\xi,\eta,\zeta)|\le 2\varphi_0^n.
\]
Therefore
\[
 \norm{f_w}_\infty\le 2\varphi_0^n.
\]
By Proposition~\ref{prop:degree-upper}, all monomials in $f_w$ have total degree at most $n$, so the number of possible monomials is at most
\[
 \binom{n+3}{3}.
\]
Consequently
\[
 \norm{f_w}_1\le 2\binom{n+3}{3}\varphi_0^n.
\]
This completes the proof.
\end{proof}

\begin{thm}[Exponential lower bounds]\label{thm:coefficient-lower}
Put $w_n=a^n$ for all $n\ge 1$.  Then
\[
 \norm{f_{w_n}}_\infty=\Theta\left(\frac{\varphi^n}{\sqrt n}\right)
\]
 and
\[
  \norm{f_{w_n}}_1\sim\varphi^n, \quad \text{ that is } \quad 
\lim_{n\to\infty}\frac{\norm{f_{w_n}}_1}{\varphi^n}=1.
\]
\end{thm}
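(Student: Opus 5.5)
By \cref{lem:chebyshev-power}, $f_{a^n}=2T_n(x/2)$. The plan is to write its coefficients explicitly and estimate them. The standard expansion of Chebyshev polynomials gives, for $n\ge1$,
\[
 2T_n(x/2)=\sum_{k=0}^{\lfloor n/2\rfloor}(-1)^k\frac{n}{n-k}\binom{n-k}{k}x^{n-2k}.
\]
I will verify this from the recurrence $P_{n+1}=xP_n-P_{n-1}$ using Pascal's rule. Alternatively, it follows from $P_n(\lambda+\lambda^{-1})=\lambda^n+\lambda^{-n}$ together with the generating function $\sum_n P_n(x)u^n=(2-xu)/(1-xu+u^2)$.

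For the $\ell_1$ norm, the signs alternate and there are no cancellations between distinct monomials. Hence
\[
 \norm{f_{a^n}}_1=\sum_k \frac{n}{n-k}\binom{n-k}{k}=|P_n(i)|\cdot(\text{suitable normalization}),
\]
or more directly $\norm{f_{a^n}}_1=P_n^*(1)$, where $P_n^*(x)=\sum_k\frac{n}{n-k}\binom{n-k}{k}x^{n-2k}$. This polynomial satisfies the sign-flipped recurrence $P^*_{n+1}=xP^*_n+P^*_{n-1}$. Concretely, $P_n^*(x)=i^{-n}P_n(ix)$, so the sequence $L_n:=P_n^*(1)$ satisfies $L_0=2$, $L_1=1$, $L_{n+1}=L_n+L_{n-1}$. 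These are the Lucas numbers, so
\[
 \norm{f_{a^n}}_1=L_n=\varphi^n+(-\varphi)^{-n}.
\]
Dividing by $\varphi^n$ and letting $n\to\infty$ gives the limit $1$.

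For the $\ell_\infty$ norm, the upper bound is easy. There are $\lfloor n/2\rfloor+1$ coefficients, summing to $L_n$, but that only gives $\norm{f}_\infty\le L_n$, so something sharper is needed. I will bound the individual terms $c_k=\frac{n}{n-k}\binom{n-k}{k}$ directly. The lower bound $\norm{f}_\infty\ge L_n/(\lfloor n/2\rfloor+1)\sim\varphi^n/n$ is too weak. Instead I will use a local-limit or concentration argument: the weights $c_k/L_n$ form a probability distribution on $k$ which I expect to be concentrated in a window of width $O(\sqrt n)$ around $k^*\approx n/(\varphi^2+1)\cdot$const. In fact $k^*/n\to 1/(\varphi\sqrt5)=(5-\sqrt5)/10$, since $\binom{n-k}{k}$ is maximized where $(n-2k)^2=k(n-k)$.

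The concrete route is Stirling's formula. Writing $k=\kappa n$ with $h(\kappa)=(1-\kappa)H(\kappa/(1-\kappa))$, where $H$ is the natural-log entropy, we get $\binom{n-k}{k}=\Theta(n^{-1/2})e^{nh(\kappa)}$ uniformly for $\kappa$ in compact subsets of $(0,1/2)$. Here $\max h=\log\varphi$, attained at $\kappa^*$, and $h''(\kappa^*)<0$. This gives $\max_k c_k=\Theta(\varphi^n/\sqrt n)$, because the factor $n/(n-k)$ is bounded near $\kappa^*$, and away from $\kappa^*$ the terms are exponentially smaller. It also matches the $\ell_1$ asymptotics via a Gaussian sum.

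A cleaner alternative I will try first, to avoid Stirling, is ratio analysis. The ratio $c_{k+1}/c_k=\frac{(n-2k)(n-2k-1)}{(k+1)(n-k-1)}$ shows that the sequence $c_k$ is unimodal. A second-order estimate of $\log(c_{k+1}/c_k)$ near the mode then shows that the terms within $\sqrt n$ of the mode are comparable to the maximum, and that they decay like a Gaussian beyond that. Combined with $\sum_k c_k=L_n\sim\varphi^n$, this yields $\max_k c_k\asymp\varphi^n/\sqrt n$ in both directions.

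The main obstacle is making this two-sided Gaussian-window comparison rigorous with uniform constants. I will handle it via Stirling's formula, with explicit error terms on the range $|k-k^*|\le n^{2/3}$, together with the unimodality bound outside that range.
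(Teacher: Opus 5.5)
Your proposal is correct and follows essentially the same route as the paper: the explicit coefficients $c_{n,k}=\frac{n}{n-k}\binom{n-k}{k}$, alternating signs to identify $\norm{f_{a^n}}_1$ with the Lucas number $L_n$, and then the ratio/unimodality argument together with Stirling's formula. Your exponent $(1-\kappa)H(\kappa/(1-\kappa))$ is exactly the paper's $F(\alpha)$, maximized at $\alpha_0=(5-\sqrt5)/10$ with $e^{F(\alpha_0)}=\varphi$. Your substitution $P_n^*(x)=i^{-n}P_n(ix)$ is also a cleaner justification of the Lucas recurrence for the absolute coefficient sums than the paper's one-line assertion that the recurrence implies it.
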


\begin{proof}
Put $w_n=a^n$ for $n\ge 1$.  Then
\[
 f_{a^n}(x)=2T_n(x/2)=:P_n(x),
\]
where $T_n$ is the Chebyshev polynomial of the first kind.  The polynomials $P_n$ of degree $n$ satisfy
\[
 P_0=2,
 \qquad
 P_1=x,
 \qquad
 P_{n+1}=xP_n-P_{n-1}.
\]

Equivalently,
\begin{equation}\label{eq:chebyshev-coeff-explicit}
 P_n(x)=\sum_{k=0}^{\lfloor n/2\rfloor}
 (-1)^k c_{n,k}x^{n-2k},
 \qquad
 c_{n,k}=\frac{n}{n-k}\binom{n-k}{k}.
\end{equation}
Thus the nonzero coefficients of $P_n$ are, up to signs, the positive integers $c_{n,k}$ for $k=0,\dots, \lfloor n/2\rfloor$. 
Their signs alternate as $k$ changes, so the $\ell^1$-norm is
\[
 \|P_n\|_1=\sum_{k=0}^{\lfloor n/2\rfloor}c_{n,k}.
\]
Equivalently, the explicit formula above shows that these sums are the absolute-value sums of the coefficients of $P_n$; the alternating signs prevent any cancellation in the $\ell^1$-norm.  The recurrence $P_{n+1}=xP_n-P_{n-1}$ then implies that these sums satisfy the Lucas numbers recurrence, and hence
\[
\norm{f_{a^n}}_1= \|P_n\|_1=L_n\sim \varphi^n,
\]
where $L_n=\varphi^n+(-1)^n\varphi^{-n}$ is the $n$-th Lucas number.

It remains to explain the size of the largest coefficient.  We first locate where the maximum occurs.  From \eqref{eq:chebyshev-coeff-explicit}, for $0\le k<\lfloor n/2\rfloor$,
\begin{equation}\label{eq:ratio-coefficients}
 \frac{c_{n,k+1}}{c_{n,k}}
 =\frac{(n-2k)(n-2k-1)}{(n-k-1)(k+1)}=\frac{(\frac{n}{k}-2)(\frac{n}{k}-2-\frac{1}{k})}{(\frac{n}{k}-1-\frac{1}{k})(1+\frac{1}{k})}
\end{equation}
Consequently the sequence $c_{n,k}$ increases while the right hand side of
\eqref{eq:ratio-coefficients} is greater than $1$ and decreases after it is less than
$1$.  If $k/n$ tends to a limit $\alpha\in [0,1/2]$, the transition from increasing to decreasing occurs asymptotically when 
\[
 (1-2\alpha)^2=\alpha(1-\alpha).
\]
The solution, in the interval $[0,1/2]$, is
\begin{equation}\label{eq:alpha0}
 \alpha_0=\frac{5-\sqrt5}{10}.
\end{equation}
Thus every maximizing index $k_n$ satisfies
\[
 \frac{k_n}{n}\longrightarrow \alpha_0.
\]
Equivalently, the largest coefficient occurs for $k$ equal to one of the nearest
integers to $\alpha_0 n$, up to a uniformly bounded error.

Now apply Stirling's formula to $c_{n,k}$ with $k=\lfloor \alpha n\rfloor$ and
$0<\alpha<1/2$.  Since
\[
 c_{n,k}=\frac{n}{n-k}\frac{(n-k)!}{k!(n-2k)!},
\]
Stirling's formula gives, uniformly for $\alpha$ in compact subintervals of
$(0,1/2)$,
\begin{equation}\label{eq:stirling-c-nk}
 c_{n,\lfloor\alpha n\rfloor}
 =\Theta\left(n^{-1/2}\exp(nF(\alpha))\right),
\end{equation}
where
\begin{equation}\label{eq:F-alpha}
 F(\alpha)=(1-\alpha)\log(1-\alpha)-\alpha\log\alpha
 -(1-2\alpha)\log(1-2\alpha).
\end{equation}
Indeed, \eqref{eq:stirling-c-nk} is obtained by substituting
$m!\sim \sqrt{2\pi m}(m/e)^m$ into
$(n-k)!/(k!(n-2k)!)$; the factor $n/(n-k)$ only contributes a bounded nonzero
factor.  Differentiating gives
\[
 F'(\alpha)=\log\frac{(1-2\alpha)^2}{\alpha(1-\alpha)}.
\]
Hence $F$ has its unique maximum on $(0,1/2)$ at $\alpha_0$ from
\eqref{eq:alpha0}.  Substituting $\alpha_0$ into \eqref{eq:F-alpha} gives
\[
 \exp(F(\alpha_0))=\varphi.
\]
Therefore
\[
 \max_{0\le k\le \lfloor n/2\rfloor} c_{n,k}
 =\Theta\left(\frac{\varphi^n}{\sqrt n}\right).
\]
Hence we get
\[
 \norm{f_{a^n}}_\infty=\Theta\left(\frac{\varphi^n}{\sqrt n}\right),
\]
as required.
This completes the proof.

\end{proof}

\begin{rem}[Random coefficient growth]\label{rem:random-coeff-growth}
For random cyclically reduced words, computations suggest exponential growth of
$\norm{f_w}_1$ with a base considerably smaller than the crude universal upper bound $\varphi_0$ obtained above.
For two sample words computed using the recursion, the observed values were:
\[
 n=40,
 \qquad
 \norm{f_w}_1=473385,
 \qquad
 \norm{f_w}_1^{1/40}\approx1.386,
\]
and
\[
 n=60,
 \qquad
 \norm{f_w}_1=822733395,
 \qquad
 \norm{f_w}_1^{1/60}\approx1.408.
\]
\end{rem}

\section{Support-size and output-size lower bounds}\label{sec:support-lower}

The degree bound \(\deg f_w\le n=||w||_\mathcal X\) implies the universal support bound
\[
 \#\supp(f_w)\le \binom{n+3}{3}=O(n^3).
\]
Thus, for example, no sequence can have \(\Theta(n^4)\) nonzero monomials.  Nevertheless, the support can grow faster than linearly.  This section first records a general syllable-slice lower bound and then applies it to both an explicit family and to random words.  The same slice also gives cubic lower bounds for total coefficient bit-size.

For \(m\ge1\), put
\[
 E_m(t)=U_{m-1}(t/2),
\]
where \(U_j\) denotes the Chebyshev polynomial of the second kind.  We also use the convention
\(E_0(t)=0\).  Thus
\[
 E_0(t)=0,
 \qquad
 E_1(t)=1,
\]
and for every \(X\in \SL(2,\C)\),
\begin{equation}\label{eq:Xm-Em}
 X^m=E_m(\Tr X)X-E_{m-1}(\Tr X)I.
\end{equation}
The polynomial \(E_m\), of degree $m-1$, has the explicit form
\begin{equation}\label{eq:Em-explicit}
 E_m(t)=\sum_{j=0}^{\lfloor (m-1)/2\rfloor}(-1)^j\binom{m-1-j}{j}t^{m-1-2j}.
\end{equation}
In particular
\[
 \#\supp(E_m)=\left\lceil\frac{m}{2}\right\rceil.
\]

We shall use the following elementary consequence of the explicit formula.  If
$m_1,\dots,m_r\ge 1$ and
\[
 P(t)=\prod_{i=1}^r E_{m_i}(t),
\]
then
\begin{equation}\label{eq:product-Em-support}
 \#\supp(P)=1+\sum_{i=1}^r \left\lfloor\frac{m_i-1}{2}\right\rfloor .
\end{equation}
Indeed, writing
\[
 E_m(t)=t^{m-1}Q_m(t^{-2}),
\]
where
\[
 Q_m(u)=\sum_{j=0}^{\lfloor(m-1)/2\rfloor}(-1)^j
 \binom{m-1-j}{j}u^j,
\]
each $Q_m$ has all degrees from $0$ to $\lfloor(m-1)/2\rfloor$, and its coefficients have the fixed alternating sign pattern $(-1)^j$ with positive absolute values.  Therefore the coefficient of $u^j$ in $\prod_i Q_{m_i}(u)$ has sign $(-1)^j$ and positive absolute value whenever $0\le j\le \sum_i\lfloor(m_i-1)/2\rfloor$.  This proves \eqref{eq:product-Em-support}.

\begin{thm}[A syllable support lower bound]\label{thm:general-syllable-support-lower}
Let
\[
 w=a^{\alpha_1}b^{\beta_1}\cdots a^{\alpha_s}b^{\beta_s}
\]
be a cyclically reduced word in standard syllable form, where $s\ge 1$ and all
$\alpha_i,\beta_i$ are nonzero.  Put
\[
 \kappa_a(w)=\sum_{i=1}^s \left\lfloor\frac{|\alpha_i|-1}{2}\right\rfloor,
 \qquad
 \kappa_b(w)=\sum_{i=1}^s \left\lfloor\frac{|\beta_i|-1}{2}\right\rfloor .
\]
Then
\[
 \#\supp(f_w)\ge (\kappa_a(w)+1)(\kappa_b(w)+1).
\]
More precisely, if
\[
 f_w(x,y,z)=\sum_{k=0}^s g_k(x,y)z^k,
\]
then
\begin{equation}\label{eq:top-z-slice-syllables}
 g_s(x,y)=\pm \prod_{i=1}^s E_{|\alpha_i|}(x)
            \prod_{i=1}^s E_{|\beta_i|}(y),
\end{equation}
and hence
\[
 \#\supp(g_s)=(\kappa_a(w)+1)(\kappa_b(w)+1).
\]
\end{thm}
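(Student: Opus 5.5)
The plan is to deduce the formula for $g_s$ from the Horowitz triangular specialization, \cref{lem:horowitz-specialization-precise}, and then to count monomials using \eqref{eq:product-Em-support}. That lemma already gives three facts: $g_k=0$ for $k>s$, $g_s\ne0$, and
\[
 g_s(\lambda+\lambda^{-1},\mu+\mu^{-1})=H_w(\lambda,\mu)
 =\prod_{i=1}^s \frac{\lambda^{\alpha_i}-\lambda^{-\alpha_i}}{\lambda-\lambda^{-1}}
 \prod_{i=1}^s \frac{\mu^{\beta_i}-\mu^{-\beta_i}}{\mu-\mu^{-1}}.
\]
Consequently the expansion $f_w=\sum_{k=0}^s g_k z^k$ is legitimate, and the task reduces to identifying each factor of $H_w$ with a Chebyshev polynomial.

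For $m\ge1$ the standard identity $U_{m-1}(\cos\theta)=\sin(m\theta)/\sin\theta$ gives
\[
 \frac{\lambda^m-\lambda^{-m}}{\lambda-\lambda^{-1}}=E_m(\lambda+\lambda^{-1}).
\]
This can also be checked without trigonometry. Both sides equal $0$ at $m=0$ and $1$ at $m=1$, and both satisfy the recurrence $e_{m+1}=t\,e_m-e_{m-1}$ with $t=\lambda+\lambda^{-1}$. Alternatively, apply \eqref{eq:Xm-Em} to $X=\mathrm{diag}(\lambda,\lambda^{-1})$.

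For negative exponents, the factor $(\lambda^{\alpha}-\lambda^{-\alpha})/(\lambda-\lambda^{-1})$ is odd in $\alpha$, so it equals $-E_{|\alpha|}(\lambda+\lambda^{-1})$. This is the source of the overall sign $\pm$, which is $(-1)^{\#\{i:\alpha_i<0\}+\#\{i:\beta_i<0\}}$.

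It follows that
\[
 g_s(\lambda+\lambda^{-1},\mu+\mu^{-1})=\pm\prod_i E_{|\alpha_i|}(\lambda+\lambda^{-1})\prod_i E_{|\beta_i|}(\mu+\mu^{-1}).
\]
The substitution map $\Z[x,y]\to\Z[\lambda^{\pm1},\mu^{\pm1}]$ is injective; this was shown in the proof of \cref{lem:horowitz-specialization-precise}. Therefore \eqref{eq:top-z-slice-syllables} holds as an identity in $\Z[x,y]$.

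For the support count, write $P_a(x)=\prod_i E_{|\alpha_i|}(x)$ and $P_b(y)=\prod_i E_{|\beta_i|}(y)$. Since $g_s=\pm P_a(x)P_b(y)$ is a product of polynomials in disjoint sets of variables, its support is the Cartesian product of the two supports, and
\[
 \#\supp(g_s)=\#\supp(P_a)\cdot\#\supp(P_b).
\]
By \eqref{eq:product-Em-support} these two factors are $\kappa_a(w)+1$ and $\kappa_b(w)+1$.

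Finally, the monomials $x^iy^jz^s$ for $x^iy^j\in\supp(g_s)$ are distinct monomials of $f_w$, and nothing else in $f_w$ has $z$-exponent $s$. This gives $\#\supp(f_w)\ge(\kappa_a+1)(\kappa_b+1)$.

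The main obstacle is mostly bookkeeping. One point is the sign for negative exponents. The other is to make sure \eqref{eq:product-Em-support} is applied to $|\alpha_i|\ge1$, which holds because all $\alpha_i,\beta_i$ are nonzero. The edge case $s$ with all $|\alpha_i|\le2$ gives $\kappa_a=0$, and then $\#\supp(P_a)=1$ is consistent with that formula.
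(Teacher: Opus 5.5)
Your proposal is correct and follows essentially the same route as the paper. Like the paper, it takes $g_s(\lambda+\lambda^{-1},\mu+\mu^{-1})=H_w(\lambda,\mu)$ from \cref{lem:horowitz-specialization-precise}, identifies each factor with $\pm E_{|m|}(\lambda+\lambda^{-1})$, uses injectivity of the substitution to obtain \eqref{eq:top-z-slice-syllables}, and then counts monomials via \eqref{eq:product-Em-support}. Your recurrence check of the Chebyshev identity and the explicit sign $(-1)^{\#\{i:\alpha_i<0\}+\#\{i:\beta_i<0\}}$ fill in details that the paper states without proof.
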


\begin{proof}
The assertion about the top $z$-coefficient is essentially the final part of the
Horowitz triangular specialization in \cref{lem:horowitz-specialization-precise}.  Write
\[
 f_w=\sum_{k=0}^s g_k(x,y)z^k.
\]
By \cref{lem:horowitz-specialization-precise}, after the substitution
$x=\lambda+\lambda^{-1}$ and $y=\mu+\mu^{-1}$, the coefficient $g_s$ satisfies
\[
 g_s(\lambda+\lambda^{-1},\mu+\mu^{-1})=
 \prod_{i=1}^s
 \frac{\lambda^{\alpha_i}-\lambda^{-\alpha_i}}{\lambda-\lambda^{-1}}
 \prod_{i=1}^s
 \frac{\mu^{\beta_i}-\mu^{-\beta_i}}{\mu-\mu^{-1}}.
\]
For $m>0$,
\[
 \frac{\lambda^m-\lambda^{-m}}{\lambda-\lambda^{-1}}
 =E_m(\lambda+\lambda^{-1}),
\]
and for $m<0$ the same quotient is
$-E_{|m|}(\lambda+\lambda^{-1})$.  Hence the right hand side is
\[
 \pm \prod_{i=1}^s E_{|\alpha_i|}(\lambda+\lambda^{-1})
      \prod_{i=1}^s E_{|\beta_i|}(\mu+\mu^{-1}).
\]
The substitution homomorphism
\[
 \Z[x,y]\to \Z[\lambda^{\pm1},\mu^{\pm1}],
 \qquad x\mapsto \lambda+\lambda^{-1},\quad y\mapsto \mu+\mu^{-1},
\]
is injective, as noted in the proof of \cref{lem:horowitz-specialization-precise}.  This proves
\eqref{eq:top-z-slice-syllables}.

It remains only to count the monomials in this coefficient slice.  By
\eqref{eq:product-Em-support},
\[
 \#\supp\left(\prod_{i=1}^s E_{|\alpha_i|}(x)\right)=\kappa_a(w)+1
\]
and
\[
 \#\supp\left(\prod_{i=1}^s E_{|\beta_i|}(y)\right)=\kappa_b(w)+1.
\]
The variables $x$ and $y$ are distinct, so multiplying the two one-variable products gives exactly
$(\kappa_a(w)+1)(\kappa_b(w)+1)$ distinct monomials in $g_s(x,y)$.  Since $g_s(x,y)z^s$ is a coefficient slice of $f_w$, all these monomials occur in $f_w$ as well.  Therefore
\[
 \#\supp(f_w)\ge \#\supp(g_s)=(\kappa_a(w)+1)(\kappa_b(w)+1).
\]

\end{proof}

\begin{thm}[A cubic coefficient from long syllables]\label{thm:syllable-cubic-coefficient}
Let
\[
 w=a^{\alpha_1}b^{\beta_1}\cdots a^{\alpha_s}b^{\beta_s}
\]
be a cyclically reduced word in standard syllable form, where $s\ge 1$ and all
$\alpha_i,\beta_i$ are nonzero.  Put
\[
 D_a(w)=\sum_{i=1}^s (|\alpha_i|-1),
 \qquad
 D_b(w)=\sum_{i=1}^s (|\beta_i|-1),
\]
and
\[
 L_a(w)=\#\{i: |\alpha_i|\ge 3\}.
\]
There is an absolute constant $M>0$ such that, whenever $L_a(w)\ge 3$, the coefficient
\[
 [x^{D_a(w)-6}y^{D_b(w)}z^s]f_w
\]
satisfies
\[
 \binom{L_a(w)}{3}
 \le
 \left|[x^{D_a(w)-6}y^{D_b(w)}z^s]f_w\right|
 \le M |w|^3 .
\]
In particular, if $L_a(w)\ge \delta |w|$ for some $\delta>0$, then $f_w$ has a coefficient whose absolute value is bounded above and below by positive constant multiples of $|w|^3$, where the constants depend only on $\delta$.
\end{thm}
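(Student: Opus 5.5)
The plan is to reduce everything to the top $z$-slice computed in \cref{thm:general-syllable-support-lower}. By \eqref{eq:top-z-slice-syllables},
\[
 [x^{D_a(w)-6}y^{D_b(w)}z^s]f_w=\pm\,[x^{D_a(w)-6}]\prod_{i=1}^s E_{|\alpha_i|}(x)\cdot [y^{D_b(w)}]\prod_{i=1}^s E_{|\beta_i|}(y).
\]
Since each $E_{|\beta_i|}$ is monic of degree $|\beta_i|-1$, the $y$-factor is $1$. So the problem becomes a one-variable coefficient computation. Put $P(x)=\prod_i E_{|\alpha_i|}(x)=x^{D_a}\prod_i Q_{|\alpha_i|}(x^{-2})$, in the notation used for \eqref{eq:product-Em-support}. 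The desired coefficient is then, up to sign, the coefficient of $u^3$ in $\prod_i Q_{|\alpha_i|}(u)$.

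Write $Q_m(u)=1-q_1(m)u+q_2(m)u^2-q_3(m)u^3+\cdots$ with $q_j(m)=\binom{m-1-j}{j}\ge0$, and $q_j(m)=0$ when $2j>m-1$. As observed before \cref{thm:general-syllable-support-lower}, all contributions to the $u^3$ coefficient have the same sign $(-1)^3$, so there is no cancellation. The absolute value of the coefficient is therefore
\[
 \sum_{\{i,j,k\}}q_1q_1q_1+\sum_{i\ne j}q_2(\alpha_i)q_1(\alpha_j)+\sum_i q_3(\alpha_i),
\]
a sum of nonnegative terms; here distinct indices are used in the first sum.

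For the lower bound, I keep only the first sum over triples of distinct indices with $|\alpha_i|\ge3$. For such $i$ we have $q_1(\alpha_i)=|\alpha_i|-2\ge1$, so the first sum is at least $\binom{L_a}{3}$. The hypothesis $L_a\ge3$ also guarantees $D_a-6\ge0$, so the monomial makes sense.

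For the upper bound, I bound each group by a cubic in $n=|w|$. Using $q_1(m)\le m$, the first sum is at most $(\sum|\alpha_i|)^3\le n^3$. Using $q_2(m)\le m^2$, the second sum is at most $(\sum|\alpha_i|^2)(\sum|\alpha_j|)\le n^3$. Using $q_3(m)\le m^3$, the third sum is at most $\sum|\alpha_i|^3\le n^3$. Together these give $M=3$.

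The final sentence of the statement follows at once: $L_a\ge\delta n$ gives $\binom{L_a}{3}\ge c_\delta n^3$ for $n$ large, and small $n$ is absorbed into the constants.

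The main obstacle is the bookkeeping in the $Q$-expansion: one must confirm that the sign coherence really holds for the $u^3$ coefficient, which it does because each $Q_m$ has coefficients of sign $(-1)^j$. One must also check that the extraction $[y^{D_b}]=1$ is correct given the $\pm$ from negative exponents. Both are already essentially handled in the support computation.
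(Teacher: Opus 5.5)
Your proposal is correct and follows essentially the same route as the paper. Both arguments use the top $z$-slice formula and the monicity of the $E_{|\beta_i|}$ to reduce to the $u^3$-coefficient of $\prod_i Q_{|\alpha_i|}(u)$, and use the common sign $(-1)^3$ to rule out cancellation. The lower bound $\binom{L_a(w)}{3}$ then comes from triples of syllables with $|\alpha_i|\ge 3$, and the upper bound from cubic bounds on the $1+1+1$, $2+1$, $3$ contributions in terms of $\sum_i|\alpha_i|\le|w|$. Your explicit estimates $q_j(m)\le m^j$, giving $M=3$, and your check that $D_a(w)-6\ge 0$ are just a more concrete version of the paper's $M_0$-based bookkeeping.
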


\begin{proof}
Write
\[
 f_w(x,y,z)=\sum_{k=0}^s g_k(x,y)z^k.
\]
By \cref{eq:top-z-slice-syllables},
\[
 g_s(x,y)=\pm
 \prod_{i=1}^s E_{|\alpha_i|}(x)
 \prod_{i=1}^s E_{|\beta_i|}(y).
\]
For $m\ge 1$ write
\[
 E_m(t)=t^{m-1}Q_m(t^{-2}),
 \qquad
 Q_m(u)=\sum_{r=0}^{\lfloor(m-1)/2\rfloor}
 (-1)^r\binom{m-1-r}{r}u^r.
\]
Then
\[
 \prod_{i=1}^s E_{|\alpha_i|}(x)
 =x^{D_a(w)}\prod_{i=1}^s Q_{|\alpha_i|}(x^{-2}),
\]
and
\[
 \prod_{i=1}^s E_{|\beta_i|}(y)
 =y^{D_b(w)}\prod_{i=1}^s Q_{|\beta_i|}(y^{-2}).
\]
The coefficient of $y^{D_b(w)}$ in the second product is $1$.  Therefore, up to an overall sign, the coefficient of $x^{D_a(w)-6}y^{D_b(w)}z^s$ in $f_w$ is the coefficient of $u^3$ in
\[
 \prod_{i=1}^s Q_{|\alpha_i|}(u).
\]
All contributions to the coefficient of $u^3$ have the same sign $(-1)^3$, and hence there is no cancellation.  For each index $i$ with $|\alpha_i|\ge 3$, the coefficient of $u$ in $Q_{|\alpha_i|}(u)$ has absolute value $|\alpha_i|-2\ge 1$.  Choosing the $u$-term from any three such factors and the constant term from all remaining factors gives a contribution of absolute value at least $1$.  Hence
\[
 \left|[x^{D_a(w)-6}y^{D_b(w)}z^s]f_w\right|
 \ge \binom{L_a(w)}{3}.
\]

It remains to give a uniform cubic upper bound for this same coefficient.  Since the signs are aligned by total $u$-degree, its absolute value is the coefficient of $u^3$ in
\[
 \prod_{i=1}^s \widetilde Q_{|\alpha_i|}(u),
 \qquad
 \widetilde Q_m(u)=\sum_{r=0}^{\lfloor(m-1)/2\rfloor}
 \binom{m-1-r}{r}u^r.
\]
Only the coefficients of $u,u^2,u^3$ can enter.  For an absolute constant $M_0$ and all $m\ge 1$, these coefficients are bounded above by
\[
 M_0m,
 \qquad M_0m^2,
 \qquad M_0m^3,
\]
respectively.  Thus the coefficient of $u^3$ in the product is bounded above by a constant times
\[
 \sum_i |\alpha_i|^3
 +\sum_{i\ne j}|\alpha_i|^2|\alpha_j|
 +\sum_{i<j<k}|\alpha_i||\alpha_j||\alpha_k|.
\]
This is $O((\sum_i |\alpha_i|)^3)$, and $\sum_i |\alpha_i|\le |w|$.  Therefore the coefficient is at most $M|w|^3$ for some absolute constant $M$.

If $L_a(w)\ge \delta |w|$, then, after decreasing the lower constant to absorb the finitely many small values of $|w|$, the lower bound $\binom{L_a(w)}{3}$ is at least a positive constant multiple of $|w|^3$.  The upper bound has just been proved.
\end{proof}

\begin{thm}[A cubic total coefficient bit-size top slice from long syllables]\label{thm:syllable-cubic-bit-size}
Let
\[
 w=a^{\alpha_1}b^{\beta_1}\cdots a^{\alpha_s}b^{\beta_s}
\]
be a cyclically reduced word in standard syllable form, where $s\ge 1$ and all
$\alpha_i,\beta_i$ are nonzero.  Put
\[
 L_a(w)=\#\{i:|\alpha_i|\ge 3\},
 \qquad
 \kappa_b(w)=\sum_{i=1}^s \left\lfloor\frac{|\beta_i|-1}{2}\right\rfloor .
\]
Write
\[
 f_w(x,y,z)=\sum_{k=0}^s g_k(x,y)z^k.
\]
There is an absolute constant $c_0>0$ such that
\[
 \norm{g_s}_{\bit,1}\ge c_0(\kappa_b(w)+1)L_a(w)^2.
\]
Consequently, if $L_a(w)\ge \delta |w|$ and $\kappa_b(w)\ge \delta |w|$ for some $\delta>0$, then
\[
 \norm{f_w}_{\bit,1}\ge \norm{g_s}_{\bit,1}\ge c_\delta |w|^3
\]
for some $c_\delta>0$ depending only on $\delta$.
\end{thm}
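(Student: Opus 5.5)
By \cref{thm:general-syllable-support-lower}, the top $z$-slice factors as
\[
 g_s(x,y)=\pm P_a(x)P_b(y),\qquad P_a=\prod_i E_{|\alpha_i|}(x),\quad P_b=\prod_i E_{|\beta_i|}(y).
\]
Because the variables are separated, every coefficient of $g_s$ is a product $c\,d$. Here $c$ ranges over the nonzero coefficients of $P_a$ and $d$ over the $\kappa_b(w)+1$ nonzero coefficients of $P_b$ (support count from \eqref{eq:product-Em-support}). Since $|d|\ge 1$, we have $\bit(cd)\ge \bit(c)$. Summing over $d$ gives
\[
 \norm{g_s}_{\bit,1}\ge (\kappa_b(w)+1)\,\norm{P_a}_{\bit,1}.
\]
It therefore suffices to show $\norm{P_a}_{\bit,1}\ge c_0' L_a(w)^2$ for an absolute constant $c_0'$.

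For this I use the substitution $u=x^{-2}$ from the proof of \cref{thm:syllable-cubic-coefficient}. Up to sign, the coefficients of $P_a$ are those of $\prod_i Q_{|\alpha_i|}(u)$. All contributions to $[u^r]$ carry the same sign $(-1)^r$, so there is no cancellation, and the coefficient's absolute value equals $[u^r]\prod_i\widetilde Q_{|\alpha_i|}(u)$. Each factor with $|\alpha_i|\ge3$ satisfies $\widetilde Q_{|\alpha_i|}(u)\ge 1+u$ coefficientwise, and every other factor is $\ge 1$ coefficientwise, since its constant term is $1$ and its coefficients are nonnegative. Hence, with $L=L_a(w)$,
\[
 \bigl|[u^r]\textstyle\prod_i Q_{|\alpha_i|}\bigr|\ge \binom{L}{r}\qquad (0\le r\le L),
\]
and in particular these coefficients are nonzero, consistent with \eqref{eq:product-Em-support}.

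It remains to sum bit-lengths. Since $\bit(m)\ge\log_2|m|$, we get
\[
 \norm{P_a}_{\bit,1}\ge\sum_{r=0}^{L}\log_2\binom{L}{r}.
\]
For $L/4\le r\le 3L/4$, the elementary estimate $\binom{L}{r}\ge (L/r)^r\ge (4/3)^{L/4}$ gives $\log_2\binom{L}{r}\ge c L$. There are about $L/2$ such $r$, so the sum is $\ge c' L^2$ for $L$ large. For small $L\ge1$, the term $r=1$ alone gives bit-length $\ge1$, and this is absorbed by shrinking $c_0$. When $L=0$ the claimed bound is trivial. This completes the first inequality with an absolute $c_0$.

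For the consequence, $g_s z^s$ is a coefficient slice of $f_w$, so $\norm{f_w}_{\bit,1}\ge\norm{g_s}_{\bit,1}$. The hypotheses then give $\norm{g_s}_{\bit,1}\ge c_0(\delta|w|)(\delta|w|)^2$, so $c_\delta=c_0\delta^3$ works.

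The main point needing care is the no-cancellation and monotonicity step, i.e.\ justifying the coefficientwise domination $\widetilde Q_m\ge 1+u$ for $m\ge3$ and $\widetilde Q_m\ge1$ for all $m\ge1$. This follows from the explicit formula \eqref{eq:Em-explicit}, since $[u]\widetilde Q_m=m-2$. The rest is routine.
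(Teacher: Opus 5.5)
Your proof is correct and follows the paper's argument essentially step for step. The common steps are the factorization $g_s=\pm P_a(x)P_b(y)$, the bound $\norm{g_s}_{\bit,1}\ge(\kappa_b(w)+1)\norm{P_a}_{\bit,1}$, the sign-aligned coefficientwise domination of $\prod_i Q_{|\alpha_i|}(u)$ by $(1+u)^{L_a(w)}$, and a linear-in-$L$ lower bound on $\bit\binom{L}{r}$ over a middle window of $r$. Your explicit window $L/4\le r\le 3L/4$, together with $\binom{L}{r}\ge (L/r)^r\ge(4/3)^{L/4}$, simply makes concrete the paper's assertion that linearly many $r$ satisfy $\binom{L}{r}\ge 2^{cL}$.
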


\begin{proof}
By \eqref{eq:top-z-slice-syllables},
\[
 g_s(x,y)=\pm P_a(x)P_b(y),
 \qquad
 P_a(x)=\prod_{i=1}^sE_{|\alpha_i|}(x),\quad
 P_b(y)=\prod_{i=1}^sE_{|\beta_i|}(y).
\]
Write
\[
 E_m(t)=t^{m-1}Q_m(t^{-2}),
 \qquad
 Q_m(u)=\sum_{r=0}^{\lfloor(m-1)/2\rfloor}
 (-1)^r\binom{m-1-r}{r}u^r.
\]
All contributions of a fixed $u$-degree in the product $\prod_iQ_{|\alpha_i|}(u)$ have the same sign.  Therefore absolute values of its coefficients may be computed without cancellation.  For every factor with $|\alpha_i|\ge3$, the constant coefficient of $Q_{|\alpha_i|}$ is $1$ and the absolute value of the coefficient of $u$ is $|\alpha_i|-2\ge1$.  Hence the absolute coefficient sequence of $\prod_iQ_{|\alpha_i|}(u)$ dominates the coefficient sequence of $(1+u)^{L_a(w)}$ coefficientwise.  It follows that
\[
 \norm{P_a}_{\bit,1}
 \ge \sum_{r=0}^{L_a(w)}\bit\binom{L_a(w)}{r}.
\]
The last sum is bounded below by a positive constant multiple of $L_a(w)^2$: for example, for linearly many $r$ with $L_a(w)/3\le r\le 2L_a(w)/3$, the binomial coefficient $\binom{L_a(w)}r$ is at least $2^{cL_a(w)}$ for an absolute constant $c>0$, after decreasing $c$ to handle small values of $L_a(w)$.

By \eqref{eq:product-Em-support}, the polynomial $P_b(y)$ has exactly $\kappa_b(w)+1$ nonzero coefficients, and each of them is a nonzero integer.  Since $x$ and $y$ are distinct variables, the nonzero coefficients of $P_a(x)P_b(y)$ are the products of a nonzero coefficient of $P_a$ and a nonzero coefficient of $P_b$, with no collisions.  Multiplication by a nonzero integer cannot decrease binary length.  Thus
\[
 \norm{g_s}_{\bit,1}=\norm{P_aP_b}_{\bit,1}
 \ge (\kappa_b(w)+1)\norm{P_a}_{\bit,1}
 \ge c_0(\kappa_b(w)+1)L_a(w)^2.
\]
The final assertion follows because $g_s(x,y)z^s$ is a coefficient slice of $f_w$ and hence contributes disjoint monomials to $f_w$.
\end{proof}

The next theorem extracts a particularly simple deterministic family from the preceding syllable-slice estimates.

\begin{thm}[A quadratic support lower bound]\label{thm:quadratic-support-lower}
For $m\ge 1$ let 
\[
 w_m=a^m b^m (ab)^m.
\]
Then \(||w_m||_{\mathcal X}=4m\), and
\[
 \#\supp(f_{w_m})\ge \left\lceil\frac{m}{2}\right\rceil^2.
\]
In particular, the worst-case support size of trace polynomials of words of length \(n\) is at least quadratic in \(n\).
\end{thm}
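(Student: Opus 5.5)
The plan is to apply \cref{thm:general-syllable-support-lower} directly to $w_m$ after writing it in standard syllable form. The length is immediate: $w_m=a^mb^m(ab)^m$ is a positive word of length $m+m+2m=4m$. It is cyclically reduced, since it begins with $a$ and ends with $b$, so $||w_m||_{\mathcal X}=4m$.

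First I would record the syllable decomposition. As a cyclic word,
\[
 w_m=a^{m}b^{m}\,\underbrace{a^1b^1\cdots a^1b^1}_{m\ \text{times}},
\]
which is already in standard syllable form with $s=m+1$. The exponents are $\alpha_1=\beta_1=m$ and $\alpha_i=\beta_i=1$ for $2\le i\le m+1$. The first syllable does not merge with the last one cyclically, because the word ends in $b$ and starts with $a$.

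Next I would compute the invariants in \cref{thm:general-syllable-support-lower}. Since $\lfloor (1-1)/2\rfloor=0$, every syllable of length one contributes nothing, so
\[
\kappa_a(w_m)=\kappa_b(w_m)=\left\lfloor\frac{m-1}{2}\right\rfloor.
\]
Using $\lfloor (m-1)/2\rfloor+1=\lceil m/2\rceil$, the theorem gives
\[
\#\supp(f_{w_m})\ge \left\lceil\frac{m}{2}\right\rceil^2.
\]
As a consistency check, \eqref{eq:top-z-slice-syllables} identifies the top $z^{m+1}$ slice as $\pm E_m(x)E_m(y)$, using $E_1=1$; this matches \cref{thm:intro-support-lower}(1). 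Since all exponents are positive, the sign there should in fact be $+$, but the support count does not depend on the sign.

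For the final sentence, take $n=4m$. Then $\lceil m/2\rceil^2\ge m^2/4=n^2/64$, which gives quadratic support growth along the lengths $n\equiv 0 \pmod 4$. To cover every $n$, I would pad the word, for example with $w=a^mb^{m+r}(ab)^m$ for $r\in\{0,1,2,3\}$. This only increases $\kappa_b$, so the same argument still gives at least $\lceil m/2\rceil^2=\Omega(n^2)$.

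There is no genuine obstacle here. The only points needing care are checking that the cyclic syllable form is exactly as written, with no merging of syllables, and the floor/ceiling identity.
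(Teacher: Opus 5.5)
Your proof is correct, but it takes a different route from the paper's written proof.

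\textbf{Your route.} You specialize \cref{thm:general-syllable-support-lower}, and hence ultimately the Horowitz triangular specialization of \cref{lem:horowitz-specialization-precise}, to the syllable data $s=m+1$, $\alpha_1=\beta_1=m$, and $\alpha_i=\beta_i=1$ for $2\le i\le m+1$. Your two checks are exactly what is needed: that no syllables merge cyclically, and that $\lfloor (m-1)/2\rfloor+1=\lceil m/2\rceil$.

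\textbf{The paper's route.} The paper proves the slice identity $[z^{m+1}]f_{w_m}=E_m(x)E_m(y)$ from scratch. It sets $C=AB$ and writes $f_{w_m}=\Tr(A^mB^mC^m)$. It then substitutes $X^m=E_m(\Tr X)X-E_{m-1}(\Tr X)I$ for $X=A,B,C$ and expands into eight summands. Only $E_m(x)E_m(y)E_m(z)\Tr(ABC)=E_m(x)E_m(y)E_m(z)(z^2-2)$ reaches $z$-degree $m+1$. Each other summand pairs a trace of $z$-degree at most $1$ (one of $\Tr A,\Tr B,\Tr C,\Tr(AB),\Tr(AC),\Tr(BC),\Tr I$) with a $z$-factor of degree at most $m-1$.

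\textbf{Comparison.} The paper's computation is self-contained: it uses only Cayley--Hamilton, with no specialization or injectivity argument. It exploits the special three-power shape of $w_m$ by treating $ab$ as a third generator with trace $z$. Your route is shorter once the general slice theorem is available. It is also more flexible, since it applies verbatim to any word with one long $a$-syllable and one long $b$-syllable. That flexibility is what makes your padding $a^mb^{m+r}(ab)^m$, $r\in\{0,1,2,3\}$, immediate. The padding is a small but welcome addition, because the paper only states the quadratic bound along lengths $n=4m$.
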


\begin{proof}
We first explain precisely what is being computed.  
Let $A, B\in \SL(2,\C)$ be two arbitrary matrices, and  put
\[
 x=\Tr(A),\qquad y=\Tr(B),\qquad z=\Tr(AB),
\]
and set
\[
 C=AB.
\]
By definition of the trace polynomial, for every such pair $A,B$ one has
\[
 f_{w_m}(x,y,z)=\Tr(A^mB^mC^m),
\]
where $x,y,z$ are the corresponding trace coordinates.  The calculation below is a symbolic calculation of this universal trace polynomial.  Since the equality holds for arbitrary $A,B$, we interpret the final expression as an identity in $\Z[x,y,z]$.  

We use the Cayley--Hamilton formula in the form \eqref{eq:Xm-Em}.  Applied to the three matrices $A$, $B$, and $C=AB$, it gives
\[
 A^m=E_m(x)A-E_{m-1}(x)I,
\]
\[
 B^m=E_m(y)B-E_{m-1}(y)I,
\]
and
\[
 C^m=E_m(z)C-E_{m-1}(z)I.
\]
Substituting these three identities into $\Tr(A^mB^mC^m)$ means that we multiply, in the matrix algebra, the three two-term factors
\[
 \bigl(E_m(x)A-E_{m-1}(x)I\bigr)
 \bigl(E_m(y)B-E_{m-1}(y)I\bigr)
 \bigl(E_m(z)C-E_{m-1}(z)I\bigr),
\]
and then take the trace of the resulting matrix,

\begin{equation}\label{eq:tr-wm}
f_{w_m}=\Tr A^m B^m C^m= \Tr  \, \bigl(E_m(x)A-E_{m-1}(x)I\bigr)
 \bigl(E_m(y)B-E_{m-1}(y)I\bigr)
 \bigl(E_m(z)C-E_{m-1}(z)I\bigr)
\end{equation}

Expanding this product gives eight summands.  In each summand, the scalar polynomial factors such as $E_m(x)$ and $E_{m-1}(y)$ multiply the trace of a shorter matrix word in $A,B,C$.
Recall that $\Tr A=x$, $\Tr(B)=y$ and $\Tr C=z$.

The summand in expanding \eqref{eq:tr-wm} obtained by choosing the first term from each of the three factors is
\[
 E_m(x)E_m(y)E_m(z)\Tr(ABC).
\]
Since $C=AB$, we have
\[
 ABC=ABAB=C^2.
\]
Hence
\[
 \Tr(ABC)=\Tr(C^2)=z^2-2.
\]
Therefore this contribution is
\[
 E_m(x)E_m(y)E_m(z)(z^2-2).
\]
The polynomial $E_m(z)$ has degree $m-1$ and leading coefficient $1$.  Therefore the coefficient of $z^{m+1}$ in this contribution is exactly
\[
 E_m(x)E_m(y).
\]

We now check that none of the other seven summands in \eqref{eq:tr-wm} contributes to the coefficient of $z^{m+1}$.  First suppose that the $C^m$ factor contributes the second term $-E_{m-1}(z)I$.  Then the explicit polynomial factor in $z$ has degree at most $m-2$.  The remaining trace is one of
\[
 \Tr(AB)=z,\qquad \Tr(A)=x,\qquad \Tr(B)=y,\qquad \Tr(I)=2,
\]
so it has $z$-degree at most $1$.  Hence every term using $E_{m-1}(z)$ has total $z$-degree at most $m-1$.

It remains to consider terms using the first $C$-summand $E_m(z)C$ but at least one identity summand from the $A^m$ or $B^m$ factors.  The relevant traces are
\[
 \Tr(AC)=\Tr(A^2B)=xz-y,
\]
\[
 \Tr(BC)=\Tr(BAB)=yz-x,
\]
and
\[
 \Tr(C)=z.
\]
Each has $z$-degree at most $1$.  Since $E_m(z)$ has degree $m-1$, these terms have total $z$-degree at most $m$.  Thus they also cannot contribute to the coefficient of $z^{m+1}$.

Combining the above computations for \eqref{eq:tr-wm}, we obtain the exact coefficient-slice identity
\[
 [z^{m+1}]f_{w_m}(x,y,z)=E_m(x)E_m(y),
\]
where $[z^{m+1}]$ means the coefficient of $z^{m+1}$ after viewing $f_{w_m}$ as a polynomial in $z$ with coefficients in $\Z[x,y]$.

The supports of $E_m(x)$ and $E_m(y)$ have size $\lceil m/2\rceil$.  Therefore every product of a monomial occurring in $E_m(x)$ with a monomial occurring in $E_m(y)$ gives a distinct nonzero monomial in the coefficient slice above.  Equivalently, all monomials
\[
 x^i y^j z^{m+1},
\]
with $x^i$ occurring in $E_m(x)$ and $y^j$ occurring in $E_m(y)$, occur nontrivially in $f_{w_m}$.  Hence
\[
 \#\supp(f_{w_m})\ge
 \#\supp(E_m(x))\#\supp(E_m(y))
 =\lceil m/2\rceil^2.
\]
Since $||w_m||_{\mathcal X}=4m$, the worst-case support is at least quadratic in the word length.

\end{proof}

\begin{lem}[A one-sided Chernoff bound for adapted indicators]\label{lem:adapted-chernoff}
Let $I_1,\dots,I_N$ be $0$--$1$ random variables adapted to a filtration
$\mathcal F_0\subseteq \mathcal F_1\subseteq \cdots\subseteq \mathcal F_N$, with
$I_j$ measurable with respect to $\mathcal F_j$.  Suppose that for some
$0<\rho\le 1$ one has
\[
 \mathbb E(I_j\mid \mathcal F_{j-1})\ge \rho
\]
for every $j$.  Then
\[
 \Pr\left(\sum_{j=1}^N I_j\le \frac{\rho N}{2}\right)\le \exp(-\rho N/8).
\]
\end{lem}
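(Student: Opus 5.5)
We prove this by the standard exponential-moment (supermartingale) argument. Put $S_k=\sum_{j\le k}I_j$. For a parameter $\theta>0$ to be chosen, we bound $\mathbb E\bigl(e^{-\theta I_j}\mid\mathcal F_{j-1}\bigr)$ using only the conditional mean. Since $I_j\in\{0,1\}$, we have
\[
 e^{-\theta I_j}=1-(1-e^{-\theta})I_j .
\]
Hence
\[
 \mathbb E\bigl(e^{-\theta I_j}\mid\mathcal F_{j-1}\bigr)\le 1-(1-e^{-\theta})\rho\le \exp\bigl(-(1-e^{-\theta})\rho\bigr).
\]

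Iterating by the tower property gives $\mathbb E e^{-\theta S_N}\le \exp\bigl(-N\rho(1-e^{-\theta})\bigr)$. To see this, condition on $\mathcal F_{N-1}$, pull out $e^{-\theta S_{N-1}}$, apply the bound above, and induct downward.

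Markov's inequality applied to $e^{-\theta S_N}$ then yields
\[
 \Pr\Bigl(S_N\le \tfrac{\rho N}{2}\Bigr)\le e^{\theta\rho N/2}\,\mathbb E e^{-\theta S_N}\le \exp\Bigl(-\rho N\bigl(1-e^{-\theta}-\tfrac{\theta}{2}\bigr)\Bigr).
\]

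It remains to choose $\theta$ so that the exponent is at least $1/8$. The optimal choice is $\theta=\log 2$, which gives $1-\tfrac12-\tfrac{\log 2}{2}\approx 0.153>1/8$. This completes the proof. Alternatively, $\theta=1/2$ gives $1-e^{-1/2}-1/4\approx0.143>1/8$, which also suffices.

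The only delicate point is the conditional-expectation step. It must use the convexity identity for $\{0,1\}$-valued variables, so that only the lower bound on $\mathbb E(I_j\mid\mathcal F_{j-1})$ is needed and no independence is required. The rest is the routine numerical check that the constant exceeds $1/8$.
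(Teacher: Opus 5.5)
Your proof is correct and follows the paper's argument essentially verbatim: the linear bound $e^{-\theta I_j}\le 1-(1-e^{-\theta})I_j$, iterated conditioning via the tower property to get $\mathbb E e^{-\theta S_N}\le \exp(-\rho N(1-e^{-\theta}))$, Markov's inequality, and the choice $\theta=\log 2$. You also make explicit two points the paper leaves implicit: the measurability of $S_{N-1}$ used when pulling it out of the conditional expectation, and the optimality of $\theta=\log 2$.
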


\begin{proof}
This is the usual multiplicative Chernoff lower-tail estimate for sub-Bernoulli sums; for completeness we recall a short proof.  For $\theta>0$ and $0\le I\le1$,
\[
 e^{-\theta I}\le 1-(1-e^{-\theta})I.
\]
Thus, conditioning successively and using
$\mathbb E(I_j\mid \mathcal F_{j-1})\ge\rho$, we get
\[
 \mathbb E e^{-\theta\sum I_j}
 \le \bigl(1-\rho(1-e^{-\theta})\bigr)^N
 \le \exp\bigl(-\rho N(1-e^{-\theta})\bigr).
\]
Therefore
\[
 \Pr\left(\sum I_j\le \rho N/2\right)
 \le \exp\bigl(\theta\rho N/2-\rho N(1-e^{-\theta})\bigr).
\]
Taking $\theta=\log 2$ gives an exponent at most $-\rho N/8$.
\end{proof}

\begin{thm}[Quadratic support, a cubic coefficient, and cubic total bit-size for random positive words]\label{thm:random-positive-support}
Let $W_n$ be a random positive word of length $n$ in the alphabet $\{a,b\}$, where the letters are independent and $\Pr(a)=p\in(0,1)$.  Then there are constants $c_p,c'_p,c''_p,c'''_p,C_p,\lambda_p>0$ such that the following three inequalities hold for all $n\ge1$:
\begin{enumerate}
\item
\[
 \Pr\bigl(\#\supp(f_{W_n})\ge c_p n^2\bigr)\ge 1-C_p e^{-\lambda_p n}.
\]
\item
\[
 \Pr\left(\exists i,j,k\text{ with } c'_p n^3\le
 |[x^iy^jz^k]f_{W_n}|\le c''_p n^3\right)\ge 1-C_p e^{-\lambda_p n}.
\]
\item
\[
 \Pr\bigl(\norm{f_{W_n}}_{\bit,1}\ge c'''_p n^3\bigr)\ge 1-C_p e^{-\lambda_p n}.
\]
\end{enumerate}
\end{thm}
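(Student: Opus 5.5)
The plan is to reduce all three statements to the deterministic syllable-slice results \cref{thm:general-syllable-support-lower}, \cref{thm:syllable-cubic-coefficient} and \cref{thm:syllable-cubic-bit-size}. It then suffices to show that, with probability $1-Ce^{-\lambda n}$, the cyclic syllable structure of $W_n$ satisfies three linear lower bounds:
\[
\kappa_a\ge\delta n,\qquad \kappa_b\ge\delta n,\qquad L_a\ge\delta n,
\]
for some $\delta=\delta_p>0$. Since $W_n$ is positive, it is cyclically reduced, and $|W_n|=n$.

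Given these bounds, the conclusions follow directly.
\begin{enumerate}
\item \cref{thm:general-syllable-support-lower} gives $\#\supp(f_{W_n})\ge(\delta n+1)^2$, which proves (1).
\item \cref{thm:syllable-cubic-coefficient}, applied with $L_a(W_n)\ge\delta n$, produces a coefficient of absolute value between $c\,n^3$ and $Mn^3$. For small $n$ the statement is absorbed into $C_p$. This proves (2).
\item \cref{thm:syllable-cubic-bit-size}, applied with $L_a,\kappa_b\ge\delta n$, gives $\norm{f_{W_n}}_{\bit,1}\ge c_\delta n^3$, which proves (3).
\end{enumerate}
We also discard the event $W_n\in\{a^n,b^n\}$, which has probability $p^n+(1-p)^n$, so that the standard syllable form with $s\ge1$ exists.

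To get the linear syllable counts, I will use disjoint windows. Partition $\{1,\dots,n\}$ into $N=\lfloor n/7\rfloor$ disjoint blocks of seven consecutive positions. Let $I_j$ be the indicator that block $j$ reads $b\,aaa\,b\,\ast\,\ast$, that is, its first five letters are $baaab$. If $I_j=1$, then the middle $aaa$ is a maximal $a$-syllable of length exactly $3$. Distinct blocks give distinct syllables, so
\[
L_a\ge\sum_j I_j,\qquad \kappa_a\ge\sum_j I_j,
\]
because each such syllable contributes $\lfloor (3-1)/2\rfloor=1$ to $\kappa_a$. Symmetrically, let $I'_j$ indicate the pattern $abbba$ in block $j$; then $\kappa_b\ge\sum_j I'_j$. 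The letters are independent, so the $I_j$ are independent Bernoulli variables with parameter $\rho=p^3(1-p)^2>0$. \cref{lem:adapted-chernoff}, with the trivial filtration generated by the blocks, gives $\sum_j I_j\ge\rho N/2$ except with probability $e^{-\rho N/8}$. The same holds for the $I'_j$ with $\rho'=(1-p)^3p^2$. Taking $\delta=\min(\rho,\rho')/15$ and a union bound gives the three bounds for $n\ge 14$, say. Smaller $n$ are handled by enlarging $C_p$.

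The main point of care is the bookkeeping rather than any real obstacle. The syllable invariants $\kappa_a$, $\kappa_b$, $L_a$ are cyclic, whereas the windows are linear. A window pattern $baaab$ sits strictly inside the linear word, so it is still a maximal syllable in the cyclic word, and the counts only go up under cyclic reading. I also have to check that the constants in \cref{thm:syllable-cubic-coefficient,thm:syllable-cubic-bit-size} depend only on $\delta$, hence only on $p$. Finally, (2) requires the upper bound $Mn^3$, which that theorem already supplies unconditionally.
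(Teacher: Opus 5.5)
Your proposal is correct and follows essentially the same route as the paper. You use disjoint fixed-length blocks containing $baaab$ and $abbba$, a Chernoff bound to get linearly many isolated length-$3$ syllables of each type, and then \cref{thm:general-syllable-support-lower,thm:syllable-cubic-coefficient,thm:syllable-cubic-bit-size}. The only differences (blocks of length $7$ instead of $5$, and \cref{lem:adapted-chernoff} in place of the ordinary binomial Chernoff bound) are cosmetic. Your threshold ``$n\ge 14$'' for $\delta=\min(\rho,\rho')/15$ is too small (something like $n\ge 90$ is needed), but this is harmless since small $n$ are absorbed into $C_p$.
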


\begin{proof}
Partition the first $5\lfloor n/5\rfloor$ letters of $W_n$ into disjoint consecutive blocks of length $5$.  Let $N=\lfloor n/5\rfloor$.  Call a block $a$-good if it is exactly
\[
 baaab,
\]
and call it $b$-good if it is exactly
\[
 abbba.
\]
The numbers $X_a$ and $X_b$ of $a$-good and $b$-good blocks are binomial random variables with parameters
\[
 \Pr(baaab)=(1-p)^2p^3,
 \qquad
 \Pr(abbba)=p^2(1-p)^3,
\]
respectively.  Hence, by the ordinary Chernoff bound, there are constants
$c_1,C_1,\lambda_1>0$, depending only on $p$, such that with probability at least
$1-C_1e^{-\lambda_1 n}$ one has
\begin{equation}\label{eq:positive-good-blocks-linear}
 X_a\ge c_1 n,
 \qquad
 X_b\ge c_1 n.
\end{equation}

Assume \eqref{eq:positive-good-blocks-linear}.  Each $a$-good block contains the substring
$baaab$ and therefore determines a cyclic $a$-syllable of length at least $3$ in the cyclic word represented by $W_n$.  Distinct disjoint good blocks determine distinct such syllables, because the three displayed $a$'s are bounded on both sides by $b$'s.  Hence, for the standard cyclic syllable form of $W_n$,
\[
 \sum_i \left\lfloor\frac{\alpha_i-1}{2}\right\rfloor \ge X_a\ge c_1 n.
\]
Similarly,
\[
 \sum_i \left\lfloor\frac{\beta_i-1}{2}\right\rfloor \ge X_b\ge c_1 n.
\]
In particular $W_n$ involves both generators.  Applying \cref{thm:general-syllable-support-lower} gives
\[
 \#\supp(f_{W_n})\ge (c_1n+1)^2\ge c_p n^2
\]
for a suitable $c_p>0$, after decreasing $c_p$ to handle the finitely many small values of $n$ and increasing $C_p$ if necessary.  This proves part~(1).

The same event also gives at least $c_1n$ cyclic $a$-syllables of length at least $3$.  Therefore, in the notation of \cref{thm:syllable-cubic-coefficient}, $L_a(W_n)\ge c_1n$.  That theorem gives a specific coefficient of $f_{W_n}$ whose absolute value is at least a positive constant multiple of $n^3$ and at most a positive constant multiple of $n^3$.  After adjusting constants for the finitely many small values of $n$, this proves part~(2).

Moreover, on the same event we have $L_a(W_n)\ge c_1n$ and $\kappa_b(W_n)\ge c_1n$.  Applying \cref{thm:syllable-cubic-bit-size} gives
\[
 \norm{f_{W_n}}_{\bit,1}\ge c'''_p n^3
\]
for a suitable constant $c'''_p>0$, again after adjusting constants for the finitely many small values of $n$.  This proves part~(3).
\end{proof}

\begin{thm}[Quadratic support, a cubic coefficient, and cubic total bit-size for random freely reduced words]\label{thm:random-freely-reduced-support}
Let $W_n$ be a random freely reduced word of length $n$ in $F(a,b)$, generated by the standard nonbacktracking model.  Then there are constants $c,c',c'',c''',C,\lambda>0$ such that the following three inequalities hold for all $n\ge1$:
\begin{enumerate}
\item
\[
 \Pr\bigl(\#\supp(f_{W_n})\ge c n^2\bigr)\ge 1-Ce^{-\lambda n}.
\]
\item
\[
 \Pr\left(\exists i,j,k\text{ with } c'n^3\le
 |[x^iy^jz^k]f_{W_n}|\le c''n^3\right)\ge 1-Ce^{-\lambda n}.
\]
\item
\[
 \Pr\bigl(\norm{f_{W_n}}_{\bit,1}\ge c''' n^3\bigr)\ge 1-Ce^{-\lambda n}.
\]
\end{enumerate}
\end{thm}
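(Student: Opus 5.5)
The plan is to mirror the proof of \cref{thm:random-positive-support}, replacing the binomial count of good blocks by the adapted Chernoff bound of \cref{lem:adapted-chernoff}. We also pass through the cyclically reduced core using \cref{lem:cyclic-cancellation-random}.

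\emph{Good blocks.} Let $N=\lfloor n/7\rfloor$ and split $W_n=s_1\cdots s_n$ into consecutive disjoint blocks of length $7$. Call a block $a$-good if it reads $b\,aaa\,b\,b\,b$; this contains $baaab$. Call it $b$-good if it reads $a\,bbb\,a\,a\,a$; this contains $abbba$. Both patterns are freely reduced. Take $\mathcal F_j$ to be the $\sigma$-algebra generated by the first $7j$ letters. Conditioned on $\mathcal F_{j-1}$, the next block has a prescribed reduced value with probability at least $\tfrac13\cdot 3^{-6}=3^{-7}$. The first letter has probability at least $1/3$ because neither $a$ nor $b$ is forbidden simultaneously. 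More precisely, the first letter is one of three allowed letters, each of probability $1/3$ provided it is not the inverse of the previous letter, and $a$, $b$ are inverse to nothing positive. So $\rho=3^{-7}$ works. \cref{lem:adapted-chernoff}, applied separately to $a$-good and $b$-good indicators, then gives with probability $\ge 1-2e^{-\rho N/8}$ at least $\rho N/2\ge c_1 n$ blocks of each kind.

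\emph{Passing to the cyclic core.} By \cref{lem:cyclic-cancellation-random}, $\Pr(K_n\ge \epsilon n)\le 3^{1-\epsilon n}$. So with exponentially high probability $K_n<\epsilon n$, where $\epsilon$ is chosen so that $2(\epsilon n+7)<c_1n/2$. Then at most $2(\epsilon n/7+2)$ blocks meet the cancelled prefix or suffix. Hence at least $c_1n/2$ good blocks of each kind survive inside $W_n^{\cyc}$, and they lie in its linear interior. In particular $W_n^{\cyc}$ involves both generators.

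Each surviving $a$-good block gives a cyclic $a$-syllable $\alpha_i=\pm3$; here it is $a^3$ flanked by $b$'s, so $|\alpha_i|=3$. Distinct blocks give distinct syllables. Thus $L_a(W_n^{\cyc})\ge c_1n/2$ and $\kappa_a\ge c_1n/2$, and similarly $\kappa_b\ge c_1n/2$. Since $f_{W_n}=f_{W_n^{\cyc}}$ and $|W_n^{\cyc}|\le n$, we can apply \cref{thm:general-syllable-support-lower}, \cref{thm:syllable-cubic-coefficient} and \cref{thm:syllable-cubic-bit-size} to the standard syllable form of a cyclic permutation of $W_n^{\cyc}$. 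These give respectively $\#\supp\ge (c_1n/2)^2$; a coefficient between $\binom{c_1n/2}{3}$ and $M n^3$; and $\norm{f}_{\bit,1}\ge c_0(c_1n/2)^3$. Finally we adjust the constants for small $n$.

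\emph{Main obstacle.} The only delicate point is the conditional lower bound in the Markov setting, namely verifying that every block value is reachable from any previous letter with a uniform probability. This is why the blocks are chosen to start and end with positive letters $a$ or $b$: they are never forbidden after any letter except their own inverse. Since $a$-good blocks start with $b$ and $b$-good blocks start with $a$, a bad previous letter could be $b^{-1}$ or $a^{-1}$. If that is an issue, I would instead lower-bound by conditioning on two-block steps, or simply use $\rho=3^{-8}$ with an extra free letter at the start of each block.
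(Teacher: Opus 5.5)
Your overall architecture matches the paper's. You use disjoint blocks, \cref{lem:adapted-chernoff} to handle the Markov dependence, and \cref{lem:cyclic-cancellation-random} to pass to $W_n^{\cyc}$. You then apply \cref{thm:general-syllable-support-lower}, \cref{thm:syllable-cubic-coefficient} and \cref{thm:syllable-cubic-bit-size} to the surviving $a^{\pm 3}$ and $b^{\pm3}$ syllables. The survival count and the syllable bookkeeping are fine.

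However, the step ``$\rho=3^{-7}$ works'' is false. The concern you raise at the end is a real gap, not a hypothetical one.
\begin{itemize}
\item The letter preceding block $j$ is the last letter of block $j-1$, and that letter is arbitrary. Block $j-1$ need not be good, so ending good blocks with $bbb$ gives no control over it.
\item On the positive-probability event $s_{7(j-1)}=b^{-1}$, the letter $b$ cannot follow. So $\mathbb E(I^a_j\mid\mathcal F_{j-1})=0$ there, and likewise for $b$-good blocks after $a^{-1}$.
\item Your justification that $a,b$ ``are inverse to nothing positive'' is beside the point, since the previous letter need not be positive.
\item \cref{lem:adapted-chernoff} needs a pointwise lower bound on the conditional probability, so it simply does not apply as you set it up.
\item A minor point: for the first block, $\Pr(s_1=b)=1/4<1/3$.
\end{itemize}

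The repair you sketch does work. Put a free letter in front of $baaabbb$. Given any previous letter, the free letter is not $b^{-1}$ with conditional probability at least $2/3$. The block is then good with conditional probability at least $\tfrac23\cdot 3^{-7}\ge 3^{-8}$, and at least $\tfrac34\cdot 3^{-7}$ for the first block.

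The paper avoids the issue more economically. It uses length-$5$ blocks whose outer letters may be either $b$ or $b^{-1}$ and whose middle three letters are all $a$ or all $a^{-1}$. Any preceding letter excludes at most one of $b,b^{-1}$, which gives the uniform bound
\[
\rho=\tfrac13\cdot\tfrac23\cdot\tfrac13\cdot\tfrac13\cdot\tfrac23=\tfrac{4}{243}
\]
with no padding letters. With either fix in place, the rest of your argument goes through as written, with the block length adjusted in the survival count.
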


\begin{proof}
Again partition the first $5N$ letters into disjoint consecutive blocks of length $5$, where $N=\lfloor n/5\rfloor$.  Call a block $a$-good if its first and last letters are in
$\{b,b^{-1}\}$ and its three middle letters are all equal to $a$ or are all equal to $a^{-1}$.  Thus an $a$-good block contains one of the patterns
\[
 b^{\epsilon} a^{\delta}a^{\delta}a^{\delta} b^{\eta},
 \qquad \epsilon,\delta,\eta\in\{\pm1\}.
\]
Define $b$-good blocks symmetrically, with the roles of $a$ and $b$ interchanged.

Let $I_j^a$ be the indicator that the $j$-th block is $a$-good.  Conditional on all letters before that block, the probability that the block is $a$-good is bounded below by an absolute positive constant.  Indeed, the first letter of the block can be chosen to be one of $b,b^{-1}$ in at least one admissible way, the next letter can be chosen to be one of $a,a^{-1}$ in two admissible ways, the next two letters are then forced to repeat that same $a$-type letter, and the final letter can be chosen to be one of $b,b^{-1}$ in two admissible ways.  Since each non-initial transition has three choices, we may take for instance
\[
 \rho=\frac{1}{3}\cdot\frac{2}{3}\cdot\frac{1}{3}\cdot\frac{1}{3}\cdot\frac{2}{3}=\frac{4}{243}.
\]
For the first block the same lower bound is valid, since the first letter is uniform and $\Pr(s_1\in\{b,b^{-1}\})=1/2\ge1/3$.  Hence \cref{lem:adapted-chernoff} gives
\[
 \Pr\left(\sum_{j=1}^N I_j^a\le \rho N/2\right)\le e^{-\rho N/8}.
\]
The same argument applies to the number of $b$-good blocks.  Therefore, with probability at least $1-C_1e^{-\lambda_1 n}$, the word $W_n$ contains at least $\rho N/2$ $a$-good blocks and at least $\rho N/2$ $b$-good blocks.

Let $K_n$ be the amount of cyclic cancellation, as in \cref{lem:cyclic-cancellation-random}.  Choose a small absolute constant $\eta>0$ such that, for all sufficiently large $n$,
\[
 \frac{\rho}{2}\left\lfloor\frac n5\right\rfloor-2\eta n-2\ge c_1 n
\]
for some $c_1>0$.  By \cref{lem:cyclic-cancellation-random},
\[
 \Pr(K_n\ge \eta n)\le 3^{1-\eta n}.
\]
When $K_n<\eta n$, passing from $W_n$ to its cyclically reduced core removes fewer than $2\eta n$ letters, all from the two ends of the linear word.  Call a good block surviving if none of its five letters is removed in this cyclic cancellation.  Since the good blocks are disjoint consecutive blocks, this can destroy at most $2\eta n+2$ of them.  Thus, with probability at least $1-C_2e^{-\lambda_2n}$, the cyclically reduced core $W_n^{\cyc}$ contains at least $c_1n$ surviving $a$-good blocks and at least $c_1n$ surviving $b$-good blocks.

Each surviving $a$-good block gives an $a$-syllable in $W_n^{\cyc}$ of absolute length at least $3$, and distinct surviving good blocks give distinct such syllables, because the middle three $a$-type letters are bounded on both sides by $b$-type letters inside the block.  Therefore, if
\[
 W_n^{\cyc}=a^{\alpha_1}b^{\beta_1}\cdots a^{\alpha_s}b^{\beta_s}
\]
is written in standard cyclic syllable form, then
\[
 \sum_i \left\lfloor\frac{|\alpha_i|-1}{2}\right\rfloor \ge c_1n,
 \qquad
 \sum_i \left\lfloor\frac{|\beta_i|-1}{2}\right\rfloor \ge c_1n,
\]
and also $L_a(W_n^{\cyc})\ge c_1n$ in the notation of \cref{thm:syllable-cubic-coefficient}.  The trace polynomial is invariant under conjugacy, so $f_{W_n}=f_{W_n^{\cyc}}$.

Applying \cref{thm:general-syllable-support-lower} to $W_n^{\cyc}$ gives
\[
 \#\supp(f_{W_n})=\#\supp(f_{W_n^{\cyc}})\ge (c_1n+1)^2\ge cn^2
\]
for a suitable absolute constant $c>0$, after adjusting constants for the finitely many small values of $n$.  This proves part~(1).  Applying \cref{thm:syllable-cubic-coefficient} to $W_n^{\cyc}$ gives a specific coefficient of $f_{W_n}$ whose absolute value is at least a positive constant multiple of $n^3$ and at most a positive constant multiple of $n^3$.  After adjusting constants for small $n$, this proves part~(2).

On the same event we have $L_a(W_n^{\cyc})\ge c_1n$ and $\kappa_b(W_n^{\cyc})\ge c_1n$.  Applying \cref{thm:syllable-cubic-bit-size} to $W_n^{\cyc}$ and using $f_{W_n}=f_{W_n^{\cyc}}$ gives
\[
 \norm{f_{W_n}}_{\bit,1}\ge c''' n^3
\]
for a suitable absolute constant $c'''>0$, after adjusting constants for small $n$.  This proves part~(3).
\end{proof}

\begin{cor}[Random quadratic support, cubic coefficient size, cubic total bit-size, and sparse-output lower bounds]\label{cor:random-expanded-output-lower}
The following hold.
\begin{enumerate}
\item Let $W_n$ be either a random positive word of length $n$ in the alphabet $\{a,b\}$, with independent letters satisfying $\Pr(a)=p\in(0,1)$, or a random freely reduced word of length $n$ in $F(a,b)$ generated by the standard nonbacktracking model.  Then there are constants $c,C,\lambda>0$, depending on the model and on $p$ in the positive case, such that
\[
 \Pr\bigl(\#\supp(f_{W_n})\ge c n^2\bigr)
 \ge 1-Ce^{-\lambda n}.
\]
\item For the same two random models, after adjusting the constants $c,C,\lambda>0$ if necessary,
\[
 \Pr\left(\exists i,j,k\text{ with } c n^3\le
 |[x^iy^jz^k]f_{W_n}|\le c^{-1}n^3\right)
 \ge 1-Ce^{-\lambda n}.
\]
\item For the same two random models, after adjusting the constants $c,C,\lambda>0$ if necessary,
\[
 \Pr\bigl(\norm{f_{W_n}}_{\bit,1}\ge c n^3\bigr)
 \ge 1-Ce^{-\lambda n}.
\]
Thus the total binary bit-size of the nonzero coefficients in the fully expanded sparse output grows at least cubically with exponentially high probability.
\item Fix one of the two random models in part~$\mathrm{(1)}$.  There are constants $c_4,C_4,\lambda_4>0$, depending on the model, on $p$ in the positive case, and on the precise sparse-output convention, such that for every deterministic algorithm $\mathcal A$ which computes and outputs $f_w$ in fully expanded sparse binary form on input words $w$ of length $n$ in the same model, if $T_{\mathcal A}(W_n)$ denotes the running time of $\mathcal A$ on input $W_n$, then
\[
 \Pr\bigl(T_{\mathcal A}(W_n)\ge c_4 n^3\bigr)\ge 1-C_4e^{-\lambda_4 n}
\]
for all $n\ge1$.
\item There are constants $c_5,C_5,\lambda_5>0$, depending on the precise sparse-output convention, such that for every deterministic algorithm $\mathcal A$ which computes and outputs $f_w$ in fully expanded sparse binary form on input cyclically reduced words $w$ of length $n$, if $U_n$ is uniformly random among cyclically reduced words of length $n$ and $T_{\mathcal A}(U_n)$ denotes the running time of $\mathcal A$ on input $U_n$, then
\[
 \Pr\bigl(T_{\mathcal A}(U_n)\ge c_5 n^3\bigr)\ge 1-C_5e^{-\lambda_5 n}
\]
for all $n\ge1$.
\end{enumerate}
Equivalently, any deterministic fully expanded sparse-output algorithm has running time $\Omega(n^3)$ with exponentially high probability in the positive, freely reduced, and uniformly cyclically reduced models, with the preceding qualification about positive words applying only to the positive model.
\end{cor}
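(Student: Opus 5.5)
Parts (1)--(3) follow directly from \cref{thm:random-positive-support} and \cref{thm:random-freely-reduced-support}. For each of the two models I would take the minimum of the lower constants, the maximum of the upper constants and of the $C$'s, and the minimum of the $\lambda$'s. Part (2) is stated with the symmetric window $[cn^3,c^{-1}n^3]$, so I would shrink $c$ until $c\le c'$ and $c^{-1}\ge c''$. A union bound over the three exceptional events then gives one event of probability $\ge 1-Ce^{-\lambda n}$ on which all three conclusions hold simultaneously.

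For part (4) the argument is purely about output size. Under any fixed sparse binary output convention, writing out a polynomial $f$ requires at least $\norm{f}_{\bit,1}$ output symbols, up to a constant factor depending on the convention, since each nonzero coefficient must be written in binary. A deterministic Turing machine writes at most one symbol per step. Hence $T_{\mathcal A}(w)\ge c_{\rm conv}\norm{f_w}_{\bit,1}$ for every input $w$. Combining this with part (3) gives $\Pr(T_{\mathcal A}(W_n)\ge c_4n^3)\ge 1-C_4e^{-\lambda_4n}$ with $c_4=c_{\rm conv}c$. The bound holds for all $n\ge1$ after enlarging $C_4$ so that the statement is trivial for small $n$.

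Part (5) is the only step needing a genuine reduction, and I expect it to be the main (mild) obstacle. As in the proof of \cref{cor:random-cyclically-reduced-degree}, the uniform distribution on cyclically reduced words of length $n$ is the nonbacktracking distribution $W_n$ conditioned on the event $\mathcal C_n$ that $W_n$ is cyclically reduced. By Rivin's count, $\Pr(W_n\in\mathcal C_n)\ge 3/4-O(3^{-n})$, which is bounded below by an absolute constant $\theta>0$ for all $n\ge1$ (check small $n$ directly). Let $B_n$ be the bad event $\norm{f_{W_n}}_{\bit,1}<cn^3$ from part (3) for the freely reduced model. Then
\[
\Pr\bigl(\norm{f_{U_n}}_{\bit,1}<cn^3\bigr)=\Pr(B_n\mid \mathcal C_n)\le \Pr(B_n)/\theta\le (C/\theta)e^{-\lambda n}.
\]
Equivalently, one can rerun the good-block argument of \cref{thm:random-freely-reduced-support} directly for $U_n$, where no cyclic cancellation occurs, but the conditioning inequality above is shorter. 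The output-size argument of part (4) then converts this into $\Pr(T_{\mathcal A}(U_n)\ge c_5n^3)\ge 1-C_5e^{-\lambda_5n}$, where the algorithm only needs to be defined on cyclically reduced inputs.
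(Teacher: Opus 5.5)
Your proposal is correct and follows essentially the same route as the paper: parts (1)--(3) by combining \cref{thm:random-positive-support,thm:random-freely-reduced-support} and adjusting constants, part (4) via the output-size bound $T_{\mathcal A}(w)\ge c_{\mathrm{conv}}\norm{f_w}_{\bit,1}$ together with part (3), and part (5) by conditioning the nonbacktracking model on $\mathcal C_n$ and using Rivin's count to bound $\Pr(W_n\in\mathcal C_n)$ below by an absolute constant. In fact Rivin's formula gives $\Pr(W_n\in\mathcal C_n)\ge 3/4$ exactly for every $n\ge1$, so no separate small-$n$ check is needed.
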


\begin{proof}
Parts~$\mathrm{(1)}$--$\mathrm{(3)}$ follow directly from \cref{thm:random-positive-support,thm:random-freely-reduced-support}, by taking the intersection of the exponentially high probability events appearing there and by decreasing $c$ and increasing $C$ if necessary.  For part~$\mathrm{(4)}$, fix one of the two random models and choose constants $c,C,\lambda>0$ as in part~$\mathrm{(3)}$.  In fully expanded sparse binary form, all nonzero monomials are explicitly listed and all nonzero coefficients must be written in binary, together with enough monomial data to identify their exponents.  Hence the running time of any deterministic algorithm that writes such an output is bounded below, up to a positive constant depending only on the output convention and machine model, by $\norm{f_w}_{\bit,1}$.  Thus, for a suitable $c_4>0$,
\[
 \Pr\bigl(T_{\mathcal A}(W_n)\ge c_4n^3\bigr)
 \ge \Pr\bigl(\norm{f_{W_n}}_{\bit,1}\ge cn^3\bigr)
 \ge 1-Ce^{-\lambda n}.
\]
Taking $C_4=C$ and $\lambda_4=\lambda$ gives part~$\mathrm{(4)}$.

For part~$\mathrm{(5)}$, use the freely reduced nonbacktracking constants $c,C,\lambda>0$ from part~$\mathrm{(3)}$ and let $\mathcal C_n$ be the set of cyclically reduced words of length $n$.  The same output-size lower bound as above gives a constant $\alpha>0$, depending only on the sparse-output convention and machine model, such that the running time is at least $\alpha\norm{f_w}_{\bit,1}$ on every cyclically reduced input $w$.  Put $c_5=\alpha c$.  If $W_n$ is uniformly distributed on all freely reduced words of length $n$, then conditioning on $W_n\in\mathcal C_n$ gives the uniform distribution on $\mathcal C_n$.  By Rivin's count \cite[Theorem~1.1]{Rivin2010},
\[
 |\mathcal C_n|=3^n+2+(-1)^n,
 \qquad
 |\mathcal R_n|=4\cdot 3^{n-1},
\]
where $\mathcal R_n$ is the set of freely reduced words of length $n$.  Hence
\[
 \Pr(W_n\in\mathcal C_n)=\frac34+O(3^{-n})
\]
and this probability is bounded below by a positive absolute constant.  Therefore
\[
 \Pr\bigl(T_{\mathcal A}(U_n)<c_5n^3\bigr)
 \le \Pr\bigl(\norm{f_{U_n}}_{\bit,1}<cn^3\bigr)
 =\Pr\bigl(\norm{f_{W_n}}_{\bit,1}<cn^3\mid W_n\in\mathcal C_n\bigr)
 \le \frac{\Pr\bigl(\norm{f_{W_n}}_{\bit,1}<cn^3\bigr)}{\Pr(W_n\in\mathcal C_n)}
 \le C_5e^{-\lambda_5 n}
\]
for suitable constants $C_5,\lambda_5>0$.  This proves part~$\mathrm{(5)}$.
\end{proof}

\begin{cor}[A cubic sparse-output lower bound]\label{cor:cubic-output-lower}
There is a sequence of cyclically reduced words \(w_m\) with \(||w_m||_{\mathcal X}=4m\) such that \(\norm{f_{w_m}}_{\bit,1}=\Omega(m^3)\).  Consequently, any deterministic algorithm which outputs the fully expanded polynomial in sparse binary form has worst-case running time \(\Omega(n^3)\) on inputs of length \(n\).
\end{cor}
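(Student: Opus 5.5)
The plan is to use the explicit family $w_m=a^mb^m(ab)^m$ from \cref{thm:quadratic-support-lower}. That proof already computed the exact coefficient slice
\[
 [z^{m+1}]f_{w_m}=E_m(x)E_m(y).
\]
The word $w_m$ is cyclically reduced: its first letter is $a$ and its last letter is $b$, so no cancellation occurs at either the interior junctions or cyclically. Hence $||w_m||_{\mathcal X}=4m$. Since the slice $g_m(x,y)z^{m+1}$ consists of monomials disjoint from all other $z$-degrees, we have $\norm{f_{w_m}}_{\bit,1}\ge \norm{E_m(x)E_m(y)}_{\bit,1}$. It therefore suffices to show $\norm{E_m(x)E_m(y)}_{\bit,1}=\Omega(m^3)$.

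The product $E_m(x)E_m(y)$ has, as nonzero coefficients, exactly the products $c_ic_j$ of nonzero coefficients of $E_m$, with no collisions, since $x$ and $y$ are distinct variables. Each $c_j$ is a nonzero integer, so $\bit(c_ic_j)\ge\bit(c_i)$. This gives
\[
 \norm{E_m(x)E_m(y)}_{\bit,1}\ge \#\supp(E_m)\cdot\norm{E_m}_{\bit,1}=\left\lceil \tfrac m2\right\rceil\norm{E_m}_{\bit,1}.
\]
So the key step is the one-variable estimate $\norm{E_m}_{\bit,1}=\Omega(m^2)$. By \eqref{eq:Em-explicit}, the coefficients of $E_m$ are $\pm\binom{m-1-j}{j}$ for $0\le j\le\lfloor(m-1)/2\rfloor$. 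For $j$ in the range $m/8\le j\le m/4$, which contains $\gg m$ integers once $m$ is large, we have $m-1-j\ge 2j$ (up to an $O(1)$ adjustment). Hence
\[
 \binom{m-1-j}{j}\ge\binom{2j}{j}\ge 2^{2j}/(2j+1)\ge 2^{cm},
\]
so each such coefficient has bit-length $\ge cm$. Summing over these $\gg m$ indices gives $\norm{E_m}_{\bit,1}\ge c'm^2$. Small $m$ are absorbed by shrinking the constant. Alternatively, I can invoke the argument in \cref{thm:syllable-cubic-bit-size}, which gives the same type of bound via domination by binomial coefficients.

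Combining the two estimates gives $\norm{f_{w_m}}_{\bit,1}\ge c''m^3$. For the algorithmic consequence, any algorithm writing $f_w$ in fully expanded sparse binary form must write every nonzero coefficient in binary. Its running time is therefore at least a constant, depending on the output convention, times $\norm{f_w}_{\bit,1}$, exactly as in the proof of \cref{cor:random-expanded-output-lower}(4). For input length $n=4m$ this is $\Omega(n^3)$. For lengths $n$ not divisible by $4$, one can pad, for instance by using $w_m a^{r}$ with $r\le 3$. I would rather simply state the worst-case bound along the subsequence $n=4m$, or note that monotonicity of worst-case time in $n$ is not needed for an $\Omega$ statement interpreted along infinitely many $n$.

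The only mildly delicate point is the uniform lower bound on bit-lengths of the middle binomial coefficients. Everything else is a direct combination of \cref{thm:quadratic-support-lower} with the no-collision observation.
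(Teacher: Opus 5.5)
Your proposal is correct and follows essentially the same route as the paper. Both use the family $w_m=a^mb^m(ab)^m$, the slice $[z^{m+1}]f_{w_m}=E_m(x)E_m(y)$, an $\Omega(m^2)$ bound on the total bit-length of the coefficients of $E_m$ coming from linearly many middle binomial coefficients of size at least $2^{cm}$, and the no-collision structure of the product. The differences are cosmetic: your $\bit(c_ic_j)\ge\bit(c_i)$ replaces the paper's $\log_2(|e_ie_j|+1)\ge\log_2(|e_i|+1)+\log_2(|e_j|+1)-1$, and your estimate $\binom{m-1-j}{j}\ge\binom{2j}{j}$ on $[m/8,m/4]$ replaces the paper's estimate on $[m/5,m/4]$.
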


\begin{proof}
We use the words \(w_m=a^mb^m(ab)^m\) from \cref{thm:quadratic-support-lower}, with $n=4m=|w_m|=|w_m|_\mathcal X=||w_m||_\mathcal X$.  It is enough to look at the coefficient slice of \(z^{m+1}\), namely \(E_m(x)E_m(y)\).

Let
\[
 E_m(t)=\sum_j e_j t^{m-1-2j}.
\]
We first note that the sum of the binary lengths of the nonzero coefficients \(e_j\) is \(\Omega(m^2)\).  Indeed, for all integers \(j\) in the interval
\[
 \left\lceil \frac{m}{5}\right\rceil \le j\le \left\lfloor\frac{m}{4}\right\rfloor
\]
and for all sufficiently large \(m\), the coefficient
\[
 |e_j|=\binom{m-1-j}{j}
\]
is at least \(2^{cm}\) for some absolute constant \(c>0\).  For instance, after replacing the displayed interval by a slightly smaller subinterval if necessary, the elementary bound
\[
 \binom{N}{j}\ge \left(\frac{N-j+1}{j}\right)^j
\]
with \(N=m-1-j\) gives a lower bound \(2^{cm}\).  There are \(\Omega(m)\) such values of \(j\), so
\[
 \sum_{e_j\ne0} \log_2(|e_j|+1)=\Omega(m^2).
\]

The nonzero coefficients of \(E_m(x)E_m(y)\) are the products \(e_i e_j\), with no collisions because the monomials have distinct \((x,y)\)-exponents.  Hence the total binary size of this coefficient slice is bounded below by
\[
 \sum_{i,j}\log_2(|e_i e_j|+1)
 \ge
 \sum_{i,j}\bigl(\log_2(|e_i|+1)+\log_2(|e_j|+1)-1\bigr).
\]
Since \(E_m\) has \(\Omega(m)\) nonzero coefficients and \(\sum_j\log_2(|e_j|+1)=\Omega(m^2)\), the right-hand side is \(\Omega(m^3)\).  Therefore \(\norm{f_{w_m}}_{\bit,1}=\Omega(m^3)\).  Since \(||w_m||_{\mathcal X}=4m\), this is \(\Omega(n^3)\) in terms of input length \(n\).  Any algorithm that actually writes this output must use at least that much time in the worst case.
\end{proof}

\begin{rem}
The preceding deterministic lower bound is complementary to the random lower bounds above: it gives an explicit sequence, while the random results give exponentially generic behavior.  Neither lower bound matches the universal upper bound \(O(n^4)\) on dense expanded output size.  It remains open whether the true worst-case sparse output size is cubic, quartic, or somewhere in between, and whether the support size itself can be cubic in the word length.  Computational evidence suggests that families such as powers of the commutator may have cubic support. 
\end{rem}

\section{Polynomial-time computation in expanded form}\label{sec:polytime}

The recursive trace identities of \cref{sec:algorithm} are conceptually useful, but a more direct way to obtain a polynomial-time bound for the fully expanded polynomial is to evaluate the word on generic matrices over a fixed quadratic extension of \(\Z[x,y,z]\).  This section proves the elementary complexity statement announced in the introduction.

Let
\[
 R=\Z[x,y,z,\zeta]/(\zeta^2-z\zeta+1).
\]
Note that $\Z[x,y,z]$ is naturally embedded in $R$.

Every element of \(R\) has a unique representative of the form
\[
 P(x,y,z)+Q(x,y,z)\zeta,
\]
with \(P,Q\in\Z[x,y,z]\). 

Define matrices over \(R\) by
\[
 \mathfrak A=\begin{pmatrix}x&-1\\ 1&0\end{pmatrix},
\qquad
\mathfrak  B=\begin{pmatrix}0&\zeta\\ \zeta-z&y\end{pmatrix}.
\]

\begin{lem}\label{lem:det}
We have 
\[
\mathfrak A, \mathfrak B\in SL(2,R)
\]
\end{lem}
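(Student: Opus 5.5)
The plan is to compute the two determinants directly in the quotient ring $R$, using only the defining relation $\zeta^2-z\zeta+1=0$. Membership in $\SL(2,R)$ for a $2\times2$ matrix over a commutative ring means having determinant $1$. Such a matrix is then automatically invertible, with inverse given by the adjugate. So it suffices to verify $\det\mathfrak A=1$ and $\det\mathfrak B=1$ in $R$.

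For $\mathfrak A$ the computation takes place already in $\Z[x,y,z]\subset R$:
\[
\det\mathfrak A = x\cdot 0-(-1)\cdot 1 = 1 .
\]

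For $\mathfrak B$ we get
\[
\det\mathfrak B = 0\cdot y-\zeta(\zeta-z) = -\zeta^2+z\zeta .
\]
The defining relation gives $\zeta^2=z\zeta-1$ in $R$. Substituting,
\[
\det\mathfrak B = -(z\zeta-1)+z\zeta = 1 .
\]

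It is also worth noting that the inverses can be written explicitly. They are
\[
\mathfrak A^{-1}=\begin{pmatrix}0&1\\-1&x\end{pmatrix},
\qquad
\mathfrak B^{-1}=\begin{pmatrix}y&-\zeta\\ z-\zeta&0\end{pmatrix},
\]
both with entries in $R$, so evaluating any word of $F(a,b)$ on $\mathfrak A,\mathfrak B$ is well defined. No step is a genuine obstacle. The only point requiring care is to use the relation in $R$ rather than treat $\zeta$ as free: $\zeta$ and $z-\zeta$ are the two roots of $u^2-zu+1$, so their product is $1$. This mirrors the choice of $q,r$ with $qr=-1$ in the proof of \cref{thm:coefficient-upper}.
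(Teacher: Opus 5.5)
Your proposal is correct and follows the paper's proof: both compute $\det\mathfrak A=1$ directly and reduce $\det\mathfrak B=-\zeta(\zeta-z)=-\zeta^2+z\zeta$ to $1$ using the relation $\zeta^2=z\zeta-1$ in $R$. Your explicit inverses also agree with the matrices $M(a^{-1})$ and $M(b^{-1})$ that the paper defines immediately afterwards.
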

\begin{proof}
Indeed,
\[
 \det \mathfrak A=1,
 \qquad
 \det \mathfrak B=-(\zeta)(\zeta-z)=-\zeta^2+z\zeta=1,
\]
where the last equality uses the defining relation \(\zeta^2=z\zeta-1\) in \(R\). 
\end{proof}

\begin{defn}
For a word \(w=s_1\cdots s_n\) in the alphabet \(a^{\pm1},b^{\pm1}\), set
\[
 M(a)=\mathfrak A,\qquad M(a^{-1})=\mathfrak A^{-1}=\begin{pmatrix}0&1\\ -1&x\end{pmatrix},\qquad M(b)=\mathfrak B,\qquad M(b^{-1})=\mathfrak B^{-1}=\begin{pmatrix}y&-\zeta\\ z-\zeta&0\end{pmatrix},
\]
and define
\[
 M(w):=M(s_1)\cdots M(s_n)\in M_2(R).
\]
\end{defn}

\begin{prop}[A quadratic generic matrix model]\label{prop:generic-matrix-model}
For every freely reduced word $w\in F(a,b)$ we have 
\[
 \Tr M(w)=f_w(x,y,z)
\]
as an element of \(R\).  In particular, if the unique representative of \(\Tr M(w)\) is written as
\[
 \Tr M(w)=P_w(x,y,z)+Q_w(x,y,z)\zeta,
\]
then
\[
 Q_w=0\qquad\text{and}\qquad P_w=f_w.
\]
\end{prop}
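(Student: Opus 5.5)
The plan is to realize $R$ inside a field where the defining property of $f_w$ applies, and then use that $f_w$ is a polynomial identity. First I will check that $\Tr\mathfrak A=x$, $\Tr\mathfrak B=y$ and $\Tr(\mathfrak A\mathfrak B)=z$ in $R$. Compute
\[
\mathfrak A\mathfrak B=\begin{pmatrix}z-\zeta & x\zeta-y\\ 0&\zeta\end{pmatrix},
\]
so $\Tr(\mathfrak A\mathfrak B)=z-\zeta+\zeta=z$. By \cref{lem:det} both matrices lie in $\SL(2,R)$, and the formulas for $M(a^{-1})$ and $M(b^{-1})$ are the adjugates. Hence $M(w)=w(\mathfrak A,\mathfrak B)$ is a genuine evaluation of the group word.

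Next I will transfer the identity $\Tr w(A,B)=f_w(\Tr A,\Tr B,\Tr AB)$ from $\SL(2,\C)$ to $\SL(2,R)$. Consider the polynomial map sending $(x,y,z,\zeta)\in\C^4$ to the pair $(\mathfrak A,\mathfrak B)$ with entries evaluated at that point. On the hypersurface $V=\{\zeta^2-z\zeta+1=0\}\subset\C^4$ these matrices lie in $\SL(2,\C)$, and their traces are $x,y,z$. The defining property of $f_w$ in \cref{thm:fricke-existence} therefore gives
\[
\Tr w(\mathfrak A,\mathfrak B)(p)-f_w(x,y,z)=0
\]
at every point $p\in V$.

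The left side is the image in $\C[V]$ of a polynomial $G\in\Z[x,y,z,\zeta]$. This polynomial is obtained by multiplying entrywise polynomial matrices, with inverses replaced by adjugates. Since $G$ vanishes on $V$ and $\zeta^2-z\zeta+1$ is irreducible, it generates a prime ideal, and the Nullstellensatz gives $G\in(\zeta^2-z\zeta+1)\C[x,y,z,\zeta]$. Because this generator is monic in $\zeta$, division with remainder works over $\Z$. Uniqueness of the remainder then shows that $G$ lies in the ideal over $\Z$, i.e.\ $G=0$ in $R$.

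A more elementary alternative avoids the Nullstellensatz. Reduce $G$ to its unique representative $P+Q\zeta$ with $P,Q\in\Z[x,y,z]$. For fixed generic $(x,y,z)$, the two roots $\zeta_\pm$ of $\zeta^2-z\zeta+1=0$ are distinct, so $P+Q\zeta_\pm=0$ forces $P=Q=0$ on a Zariski-dense set, and hence identically.

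Finally, the equation $\Tr M(w)=f_w$ in $R$, together with the uniqueness of the representative $P+Q\zeta$, yields $Q_w=0$ and $P_w=f_w$. I expect no real obstacle. The only delicate point is passing from "vanishes on $V$" to "zero in $R$", and the distinct-roots argument handles it cleanly.
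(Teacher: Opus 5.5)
Your proof is correct, but at the one nontrivial step it uses a different mechanism from the paper.

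The paper transfers the Fricke identity to $R$ by a specialization principle. It asserts that $\Tr w(A,B)=f_w(\Tr A,\Tr B,\Tr AB)$ is an integer polynomial identity in the entries of two generic determinant-one matrices, justified only by a pointer to the universal trace identities (these rest on Cayley--Hamilton and so hold over any commutative ring). It then specializes that identity to $\mathfrak A,\mathfrak B$.

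You instead use only the complex defining property of $f_w$, applied at the points of $V=\{\zeta^2-z\zeta+1=0\}\subset\C^4$. Your observation that $M(a^{-1}),M(b^{-1})$ are adjugates guarantees that evaluation at these points commutes with forming $w(\mathfrak A,\mathfrak B)$. You then exploit the specific structure of $R$: it is free over $\Z[x,y,z]$ on $1,\zeta$, and for $z_0^2\ne 4$ the two roots $\zeta_\pm$ are distinct. So an element $P+Q\zeta$ vanishing on $V(\C)$ has $P=Q=0$ on the dense set $\{z^2\ne 4\}$, hence identically. In effect you show that $R$ embeds in the ring of functions on $V(\C)$, which is exactly what lets a pointwise identity lift.

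Your route is more self-contained: it never needs the identity to hold in the coordinate ring of $\SL_2\times\SL_2$ over $\Z$, a point the paper leaves implicit. The paper's principle, once justified, is more general: it gives the identity in $\SL(2,S)$ for every commutative ring $S$, including rings with nilpotents or of positive characteristic, where a pointwise argument like yours would not apply. Your Nullstellensatz variant is also valid, since the monic division correctly descends ideal membership from $\C$ to $\Z$. The distinct-roots argument is the cleaner of the two.
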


\begin{proof}
By definition of $\mathfrak A, \mathfrak B$, we have  
\[
 \Tr \mathfrak A=x,
 \qquad
 \Tr \mathfrak B=y.
\]
Also, direct multiplication gives
\[
 \mathfrak A\mathfrak B=\begin{pmatrix}z-\zeta&x\zeta-y\\ 0&\zeta\end{pmatrix},
 \qquad
 \Tr(\mathfrak A\mathfrak B)=z.
\]
By the defining property of the trace polynomial, for every pair \(A,B\in\SL(2,\C)\) one has
\[
 \Tr(w(A,B))=f_w(\Tr A,\Tr B,\Tr(AB)).
\]
Equivalently, after using the universal trace identities to express the left hand side in the entries of two generic determinant-one matrices, this is a polynomial identity with integer coefficients.  Hence it may be specialized to any commutative coefficient ring.  Applying this identity over the ring \(R\) to the above matrices gives
\[
 \Tr M(w)=f_w(\Tr \mathfrak A,\Tr \mathfrak B,\Tr(\mathfrak A\mathfrak B))=f_w(x,y,z).
\]
The right-hand side belongs to the subring \(\Z[x,y,z]\subset R\).  Since every element of \(R\) has a unique expression \(P+Q\zeta\), it follows that \(Q_w=0\) and \(P_w=f_w\).
\end{proof}

Before stating the algorithm, we spell out the representation of polynomials used in the complexity estimate.  A polynomial
\[
 P(x,y,z)=\sum_{i+j+k\le n} c_{ijk}x^iy^jz^k
\]
of total degree at most \(n\) is stored in a \emph{dense coefficient array}: namely, we allocate one integer entry for every triple
\[
 (i,j,k)\in\Z_{\ge0}^3,\qquad i+j+k\le n,
\]
and store the corresponding coefficient \(c_{ijk}\), even when \(c_{ijk}=0\).  Thus each such polynomial is represented by \(\binom{n+3}{3}=O(n^3)\) integer entries.  An element of the quotient ring \(R\) in normal form
\[
 P(x,y,z)+Q(x,y,z)\zeta
\]
is stored as two such dense arrays, one for \(P\) and one for \(Q\).  Addition and subtraction are performed coefficient-by-coefficient, while multiplication by \(x\), \(y\), \(z\), or by \(\zeta\) is implemented by shifting entries and then reducing all occurrences of \(\zeta^2\) via the relation \(\zeta^2=z\zeta-1\).

\begin{thm}[Polynomial-time expanded computation]\label{thm:polytime-expanded}
Let \(w\in F(a,b)\) be given as a word of length \(n\) in the alphabet \(a^{\pm1},b^{\pm1}\).  There is an algorithm which outputs the fully expanded Fricke polynomial
\[
 f_w(x,y,z)\in\Z[x,y,z]
\]
in deterministic Turing-machine time \(O(n^5)\) and space \(O(n^4)\), using dense coefficient arrays for all monomials of total degree at most \(n\).  In this dense coefficient-table representation, rather than a sparse list of nonzero monomials, the final output has \(O(n^4)\) bits.
\end{thm}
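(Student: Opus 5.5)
The algorithm multiplies out $M(w)=M(s_1)\cdots M(s_n)$ from left to right in $M_2(R)$, takes the trace, and outputs $P_w$. By \cref{prop:generic-matrix-model} we have $P_w=f_w$ and $Q_w=0$, so correctness is immediate. All the work is in the complexity accounting.

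\textbf{Degree control.} Every entry of $M(s)$ for $s\in\{a^{\pm1},b^{\pm1}\}$ has the form $P+Q\zeta$ with $\deg P,\deg Q\le 1$. We track the invariant that after $k$ steps each entry of the partial product $M_k=M(s_1)\cdots M(s_k)$ is $P+Q\zeta$ with $\deg P\le k$ and $\deg Q\le k-1$. This is the natural grading in which $\zeta$ has weight $1$, compatible with $\zeta^2=z\zeta-1$.

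Multiplying $M_k$ on the right by $M(s_{k+1})$ computes each new entry as a sum of two products. In each product, one factor is an entry of $M_k$ and the other is an entry of $M(s_{k+1})$, which is a linear combination of $1,x,y,z,\zeta$ with coefficients in $\{0,\pm1\}$. So each new entry is obtained from $O(1)$ operations of the following kinds:
\begin{itemize}
\item shifting a dense array by $x$, $y$ or $z$;
\item multiplying by $\zeta$, where $(P+Q\zeta)\zeta=-Q+(P+zQ)\zeta$;
\item adding or subtracting arrays.
\end{itemize}
These operations preserve the degree invariant. Each array has $O(k^3)\le O(n^3)$ entries, so one step costs $O(n^3)$ integer operations.

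\textbf{Coefficient size.} We need every integer coefficient ever stored to have $O(n)$ bits. The $\ell_1$ norm of an entry at most multiplies by a constant at each step, since each new entry is a bounded combination of shifted old entries, with the substitution $\zeta^2\mapsto z\zeta-1$ tripling norms at worst. Hence all coefficients are bounded by $C^n$ and have $O(n)$ bits.

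One additive or shift operation on an integer of $O(n)$ bits costs $O(n)$ time. So a step costs $O(n^3)\cdot O(n)=O(n^4)$, and $n$ steps cost $O(n^5)$. The space at any time is a constant number of arrays of $O(n^3)$ entries of $O(n)$ bits, which is $O(n^4)$. The output $P_w$ is one such array, so it has $O(n^4)$ bits.

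\textbf{Main obstacle.} The delicate point is the Turing-machine model. Shifting a dense array indexed by triples $(i,j,k)$ with $i+j+k\le n$ must be done by sequential scans, not random access. I would store each array in a fixed lexicographic order with fixed-width $\Theta(n)$-bit slots. A shift by $x$, $y$ or $z$ is then realized by a linear pass with a second tape, copying with a computable offset. Adding two arrays stored in the same layout is likewise one linear pass.

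Each pass therefore costs time linear in the tape length, $O(n^4)$, up to bookkeeping of indices in $O(\log n)$ bits. That bookkeeping can be absorbed because the slot width is already $\Theta(n)$. This confirms the per-step cost $O(n^4)$ and the total $O(n^5)$.
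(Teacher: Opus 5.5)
Your proposal is correct and takes essentially the same approach as the paper. Like the paper, you evaluate $w$ on the generic matrices $\mathfrak A,\mathfrak B$ over $R=\Z[x,y,z,\zeta]/(\zeta^2-z\zeta+1)$, store each entry as a pair of dense arrays of size $O(n^3)$, bound coefficients by $C^n$ via constant $\ell_1$-growth per letter, and charge $O(n^4)$ Turing-machine steps per letter; your weighted-degree invariant ($\deg P\le k$, $\deg Q\le k-1$) and your sequential-scan implementation of the shifts supply detail that the paper only asserts, without changing the argument.
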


\begin{proof}
Use the matrices \(\mathfrak A, \mathfrak B\in\SL(2,R)\) and the letter matrices \(M(a),M(a^{-1}),M(b),M(b^{-1})\) from \cref{prop:generic-matrix-model}.  Thus every letter in \(a^{\pm1},b^{\pm1}\) is represented by a \(2\times2\) matrix whose entries are among
\[
 0,\ \pm1,\ x,\ y,\ \pm\zeta,\ z-\zeta,\ \zeta-z.
\]
Given \(w=s_1\cdots s_n\), we multiply the corresponding matrices
\[
 M(w)=M(s_1)\cdots M(s_n)
\]
\emph{inside} the matrix ring \(M_2(R)\). 

More precisely, we do not manipulate arbitrary polynomials in four independent variables \(x,y,z,\zeta\).  Instead, we perform all computations in  the quotient ring
\[
 R=\Z[x,y,z,\zeta]/(\zeta^2-z\zeta+1).
\]
Thus every entry of every intermediate matrix is stored uniquely as a pair
\[
 (P,Q)\quad\text{representing}\quad P(x,y,z)+Q(x,y,z)\zeta,
\]
where \(P,Q\in\Z[x,y,z]\).  Whenever a product produces a factor \(\zeta^2\), it is immediately reduced using
\[
 \zeta^2=z\zeta-1.
\]


 By \cref{prop:generic-matrix-model}, the trace of the resulting matrix $M(w)=M(s_1)\cdots M(s_n)$ is exactly \(f_w(x,y,z)\).  Therefore the algorithm outputs the polynomial part \(P_w\) in the unique expression
\[
 \Tr M(w)=P_w(x,y,z)+Q_w(x,y,z)\zeta;
\]
by \cref{prop:generic-matrix-model}, \(Q_w=0\) and \(P_w=f_w\). 

It remains to bound the running time for computing $\Tr M(w)$ by performing the requisite matrix multiplications in $SL(2,R)$.  After the first \(k\) letters of $w$ have been processed, every polynomial occurring in the entries of the current matrix has total degree at most \(k\).  Therefore it is enough to store coefficient arrays indexed by monomials \(x^iy^jz^\ell\) with
\[
 i+j+\ell\le n.
\]
There are
\[
 \binom{n+3}{3}=O(n^3)
\]
such monomials.

Multiplication of an element \(P+Q\zeta\in R\) by any of the possible generator-matrix entries is  rather simple.  Multiplication by \(x,y,z\) is just a shift of coefficient arrays, multiplication by \(\pm1\) is a sign change, and multiplication by \(\zeta\) is given by
\[
 (P+Q\zeta)\zeta=-Q+(P+zQ)\zeta.
\]
Similarly,
\[
 (P+Q\zeta)(\zeta-z)=-zP-Q+P\zeta,
\]
and
\[
 (P+Q\zeta)(z-\zeta)=zP+Q-P\zeta.
\]
Thus one multiplication of the current \(2\times2\) matrix by a generator matrix requires only a bounded number of additions and shifts of arrays of size \(O(n^3)\).  Hence each letter costs \(O(n^3)\) integer arithmetic operations, and the whole word costs \(O(n^4)\) such operations.

Finally, the bit lengths of the intermediate coefficients are only linear in \(n\).  Indeed, at each step the \(\ell^1\)-norm of the coefficient array of each entry is multiplied by at most an absolute constant: the update consists of a bounded number of additions, sign changes, and monomial shifts.  Hence all intermediate coefficient absolute values are at most \(C^n\) for some universal constant \(C\), so their binary lengths are \(O(n)\).

We now translate the preceding count into an ordinary deterministic Turing-machine bound.  Each letter update involves \(O(n^3)\) coefficient-array entries, and each entry operation is an addition, subtraction, sign change, or copy involving integers of bit length \(O(n)\).  With schoolbook arithmetic bit operations, such an integer operation costs \(O(n)\) time.  Thus each letter costs \(O(n^4)\) Turing-machine steps, and the whole word costs
\[
  O(n)\cdot O(n^4)=O(n^5)
\]
steps.  The space required to store a constant number of coefficient arrays is
\[
  O(n^3)\text{ coefficients}\times O(n)\text{ bits per coefficient}=O(n^4)
\]
bits.  The same bound also covers the final dense output table: it contains one slot for each
monomial of total degree at most \(n\), hence \(O(n^3)\) integer coefficients, and each
coefficient has \(O(n)\) bits.  This proves the stated deterministic Turing-machine time
and space bounds.
\end{proof}

\begin{defn}[Character equivalence]\label{def:character-equivalence}
For words $v,w\in F(a,b)$ we write
\[
 v\equiv_c w
\]
if $v$ and $w$ are $\SL(2,\C)$-character equivalent, that is, if
\[
 \Tr(v(A,B))=\Tr(w(A,B))
\]
for every pair $A,B\in\SL(2,\C)$.
\end{defn}

\begin{lem}[Character equivalence and Fricke polynomials]\label{lem:character-equivalence-fricke}
For any $v,w\in F(a,b)$,
\[
 v\equiv_c w
 \quad\Longleftrightarrow\quad
 f_v=f_w \quad\text{in }\Z[x,y,z].
\]
\end{lem}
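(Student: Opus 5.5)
The plan is to prove both implications directly from the defining property of the Fricke trace polynomial in \cref{thm:fricke-existence}. The key additional input is that a polynomial in $\Z[x,y,z]$ is determined by its values at all triples $(\Tr A,\Tr B,\Tr AB)$ with $A,B\in\SL(2,\C)$. In other words, the trace map $\SL(2,\C)^2\to\C^3$, $(A,B)\mapsto(\Tr A,\Tr B,\Tr AB)$, must be surjective, or at least have Zariski dense image.

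For the implication ``$\Leftarrow$'', suppose $f_v=f_w$. Then for all $A,B\in\SL(2,\C)$ we have
\[
 \Tr v(A,B)=f_v(\Tr A,\Tr B,\Tr AB)=f_w(\Tr A,\Tr B,\Tr AB)=\Tr w(A,B),
\]
so $v\equiv_c w$ by \cref{def:character-equivalence}.

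For the implication ``$\Rightarrow$'', suppose $v\equiv_c w$, and put $h=f_v-f_w\in\Z[x,y,z]$. By assumption, $h(\Tr A,\Tr B,\Tr AB)=0$ for all $A,B\in\SL(2,\C)$. I would show surjectivity of the trace map using the explicit matrices already introduced in the proof of \cref{thm:coefficient-upper}. Given an arbitrary $(x,y,z)\in\C^3$, take
\[
 A=\begin{pmatrix}x&-1\\1&0\end{pmatrix},\qquad B=\begin{pmatrix}0&q\\r&y\end{pmatrix},
\]
where $r$ is any root of $r^2+zr+1=0$ and $q=z+r$. This construction makes $qr=-1$ and $q-r=z$. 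The computation there, which nowhere used $|x|=|y|=|z|=1$, gives $\det A=\det B=1$, $\Tr A=x$, $\Tr B=y$ and $\Tr AB=q-r=z$. Hence $h$ vanishes identically on $\C^3$. A polynomial over an infinite field that vanishes at every point is the zero polynomial, for example by induction on the number of variables. Therefore $f_v=f_w$ in $\Z[x,y,z]$.

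There is no serious obstacle here. The only point requiring care is checking that the matrix construction from \cref{thm:coefficient-upper} works for arbitrary complex $x,y,z$, not only on the unit torus. It does, since a root $r$ of $r^2+zr+1=0$ always exists over $\C$ and $r\neq 0$, so $q=-1/r$ is well defined. This surjectivity is essentially the same fact that underlies the uniqueness part of \cref{thm:fricke-existence}.
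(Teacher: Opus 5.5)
Your proof is correct and matches the paper's own argument: the direction $\Leftarrow$ follows immediately from the defining property, and for $\Rightarrow$ the paper uses the same explicit matrices $A$ and $B$, with $r$ a root of $r^2+zr+1=0$ and $q=z+r$. These show that every point of $\C^3$ is a trace triple, so $f_v-f_w$ vanishes identically and is therefore the zero polynomial. Your check that the construction from \cref{thm:coefficient-upper} does not need $|x|=|y|=|z|=1$ is the same point the paper makes by repeating the construction for arbitrary $(x,y,z)\in\C^3$.
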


\begin{proof}
If $f_v=f_w$, then for every $A,B\in\SL(2,\C)$ we have
\[
 \Tr(v(A,B))
 =f_v(\Tr A,\Tr B,\Tr(AB))
 =f_w(\Tr A,\Tr B,\Tr(AB))
 =\Tr(w(A,B)),
\]
so $v\equiv_c w$.

Conversely, suppose $v\equiv_c w$.  Then the polynomial
\[
 F(x,y,z):=f_v(x,y,z)-f_w(x,y,z)
\]
vanishes on every trace triple
\[
 (x,y,z)=(\Tr A,\Tr B,\Tr(AB))
\]
with $A,B\in\SL(2,\C)$.  We claim that every point of $\C^3$ occurs as such a trace triple.  Indeed, given arbitrary $x,y,z\in\C$, take
\[
 A=\begin{pmatrix}x&-1\\ 1&0\end{pmatrix}.
\]
Choose $q,r\in\C$ satisfying
\[
 q-r=z,
 \qquad
 qr=-1.
\]
This is possible since $r$ may be taken to be any root of $r^2+zr+1=0$ and then $q=r+z$.  Put
\[
 B=\begin{pmatrix}0&q\\ r&y\end{pmatrix}.
\]
Then $\det A=\det B=1$, $\Tr A=x$, $\Tr B=y$, and direct multiplication gives
\[
 \Tr(AB)=q-r=z.
\]
Thus $F$ vanishes on all of $\C^3$.  Since $F\in\Z[x,y,z]\subset\C[x,y,z]$, it follows that $F=0$ as a polynomial.  Hence $f_v=f_w$.
\end{proof}

\begin{cor}[Polynomial-time decision of $\SL(2,\C)$-character equivalence]\label{cor:polytime-character-equivalence}
There is an algorithm which, given two freely reduced words $v,w\in F(a,b)$ of length at most $n$, decides whether $v\equiv_c w$ in deterministic Turing-machine time $O(n^5)$.
\end{cor}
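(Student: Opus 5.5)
The plan is to reduce the decision problem to the expanded computation of \cref{thm:polytime-expanded}, and then compare the two outputs using \cref{lem:character-equivalence-fricke}.

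Given freely reduced words $v,w$ of length at most $n$, I would run the algorithm of \cref{thm:polytime-expanded} separately on $v$ and on $w$. Preliminary cyclic reduction is not needed, since that theorem accepts any word of length $n$. To make the two outputs directly comparable, I would use the same degree bound $n$ for both dense arrays. If, say, $|v|<n$, the computation for $v$ is run inside arrays indexed by all monomials $x^iy^jz^k$ with $i+j+k\le n$. The proof of \cref{thm:polytime-expanded} already allocates arrays of this size, and the extra zero entries change none of its estimates. Each run takes time $O(n^5)$ and space $O(n^4)$, and produces a dense coefficient table for $f_v$, respectively $f_w$. Both tables have $\binom{n+3}{3}$ entries, each entry of bit length $O(n)$; the $O(n)$ bound comes from the $C^n$ coefficient bound in that proof, or alternatively from \cref{thm:coefficient-upper}.

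Next I would compare the two tables entry by entry, in a fixed ordering of the monomials. There are $O(n^3)$ entries, and comparing two integers of $O(n)$ bits costs $O(n)$ steps. The comparison therefore costs $O(n^4)$ steps in total, even allowing for the Turing-machine overhead of walking two tapes in parallel. Alternatively, one could copy the tables onto separate tapes first, at cost $O(n^4)$. The algorithm answers ``yes'' exactly when all entries agree, that is, exactly when $f_v=f_w$ in $\Z[x,y,z]$. By \cref{lem:character-equivalence-fricke}, this is equivalent to $v\equiv_c w$, so the algorithm is correct. The total running time is $2\cdot O(n^5)+O(n^4)=O(n^5)$.

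The only point requiring care is the bookkeeping that keeps the comparison within the time bound in the Turing-machine model. The two tables must be laid out in the same monomial order with the same degree bound $n$, which is why I insist on that padding. Given this layout, a single linear scan of each table suffices. Everything else is a direct invocation of results proved earlier.
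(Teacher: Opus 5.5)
Your proposal is correct and follows the paper's proof essentially step for step. Like the paper, you compute both dense coefficient tables via \cref{thm:polytime-expanded}, compare them entrywise in $O(n^4)$ time using the $O(n)$-bit coefficient bound, and conclude via \cref{lem:character-equivalence-fricke}. Your explicit padding of both tables to the common degree bound $n$ is a harmless bookkeeping detail that the paper leaves implicit.
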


\begin{proof}
Apply the algorithm of \cref{thm:polytime-expanded} to $v$ and to $w$.  This computes the fully expanded coefficient arrays of $f_v$ and $f_w$ in time $O(n^5)$ and space $O(n^4)$ for each word.  By \cref{lem:character-equivalence-fricke}, the words are character equivalent if and only if these two polynomials are equal.

To compare the two expanded polynomials, compare their dense coefficient arrays for all monomials $x^iy^jz^k$ with $i+j+k\le n$.  There are $O(n^3)$ such monomials, and by the coefficient-growth estimates used in \cref{thm:polytime-expanded}, all coefficients produced by the algorithm have $O(n)$ bits.  Hence the coefficient-by-coefficient comparison costs $O(n^4)$ Turing-machine time, which is dominated by the $O(n^5)$ time needed to compute the two polynomials.  Therefore the total deterministic running time is $O(n^5)$.
\end{proof}

\begin{rem}
The preceding argument is not meant to optimize the exponent of the polynomial running time.  Its point is that the fully expanded output has polynomial size: by \(\deg f_w\le n\) there are only \(O(n^3)\) possible monomials, and the coefficient-growth estimates of \cref{sec:coefficient-growth} give only \(O(n)\)-bit coefficients.  Thus the exponential growth of the coefficients as integers does not force exponential-time computation, because exponential magnitude corresponds to only linear bit length.  The elementary implementation above gives the explicit bounds \(O(n^5)\) time and \(O(n^4)\) space on a deterministic Turing machine.  By \cref{cor:random-expanded-output-lower,cor:cubic-output-lower}, the sparse expanded output has size at least \(\Omega(n^3)\) bits both exponentially generically in the positive, freely reduced, and uniformly cyclically reduced models considered above, and in the worst case.  These are output-size, and therefore running-time, lower bounds; they are not meant as lower bounds on auxiliary working space, except for whatever space is used to store or write the output itself.  The dense coefficient-table representation used in the algorithm has worst-case size at most \(O(n^4)\) bits.  Thus the naive method is polynomial and within a factor of at most \(O(n^2)\) of the currently proved sparse-output lower bound; closing this gap is one of the open problems below.
\end{rem}

\section{Open problems}\label{sec:open-problems}

We collect here several natural questions suggested by the preceding results.

\begin{prob}[Fluctuations for cyclically reduced words]\label{prob:typical-cyclically-reduced-degree}
Let $W_n$ be a uniformly random cyclically reduced word of length $n$ in $F(a,b)$.
The exact degree formula and \cref{cor:random-cyclically-reduced-degree} imply
\[
  \frac{\deg f_{W_n}}{n}\to \frac{5}{6}
\]
in probability.  Determine the limiting distribution of
\[
  \frac{\deg f_{W_n}-5n/6}{\sqrt n}.
\]
\end{prob}

\begin{prob}[Fluctuations for positive words]\label{prob:positive-clt}
Let $W_n$ be a random positive word of length $n$, with independent letters and
$\Pr(a)=p$, $\Pr(b)=1-p$.  Prove a central limit theorem for
\[
  \deg f_{W_n}
\]
around its mean-order value
\[
  \bigl(1-p(1-p)\bigr)n.
\]
In particular, in the unbiased case $p=1/2$, prove a central limit theorem around
$3n/4$.
\end{prob}

This should be accessible because, for positive words, \cref{prop:positive-degree}
expresses the degree in terms of the number of cyclic transitions between $a$-runs
and $b$-runs.

The preceding cyclically reduced fluctuation problem replaces an earlier degree-formula problem:
\cref{prop:top-homogeneous-exact-degree} now gives an exact combinatorial formula for the degree and
for the top homogeneous term of every cyclically reduced trace polynomial.

\begin{prob}[Worst-case coefficient growth]\label{prob:worst-coeff-growth}
Let
\[
  H(w)=\norm{f_w}_\infty=\max_{i,j,k}|[x^iy^jz^k]f_w|,
  \qquad
  L(w)=\norm{f_w}_1=\sum_{i,j,k}|[x^iy^jz^k]f_w|,
\]
and set
\[
  C_H=\limsup_{n\to\infty}\max_{||w||_{\mathcal X}=n} H(w)^{1/n},
  \qquad
  C_L=\limsup_{n\to\infty}\max_{||w||_{\mathcal X}=n} L(w)^{1/n}.
\]
Determine $C_H$ and $C_L$.
\end{prob}

The one-generator words $w=a^n$ show that both constants are at least the golden
ratio $\varphi$.  The Cauchy-estimate argument in \cref{sec:coefficient-growth}
gives explicit upper bounds with exponential base less than $2.5$, but these bounds
are unlikely to be sharp.  It is not clear whether mixed $a,b$-words can produce a
larger exponential coefficient-growth rate than one-generator powers.

\begin{prob}[Generic coefficient growth]\label{prob:typical-coeff-growth}
For random freely reduced words $W_n$, determine whether the limits
\[
  \lim_{n\to\infty}\frac{1}{n}\log H(W_n),
  \qquad
  \lim_{n\to\infty}\frac{1}{n}\log L(W_n)
\]
exist in probability or almost surely.  Formulate and solve the analogous problem for
random positive words.
\end{prob}

The examples computed in the discussion preceding these notes suggest exponential
coefficient growth in random freely reduced words, but the typical exponential constants are
not known.

\begin{prob}[Sparsity]\label{prob:sparsity}
Let
\[
  N(w)=\#\{(i,j,k):[x^iy^jz^k]f_w\ne0\}.
\]
Determine the worst-case and generic growth of $N(w)$ for cyclically reduced words of
length $n$.
\end{prob}

The degree bound gives the universal estimate $N(w)=O(n^3)$, while \cref{thm:general-syllable-support-lower,thm:quadratic-support-lower,thm:random-positive-support,thm:random-freely-reduced-support} give quadratic lower bounds for an explicit sequence and, with exponentially high probability, for random positive and random freely reduced words.  It would be interesting to know whether the worst case is actually of order $n^3$, and whether random freely reduced words typically have cubic support.

\begin{prob}[Finer deterministic complexity]\label{prob:finer-complexity}
Determine the optimal deterministic Turing-machine complexity of computing the fully
expanded polynomial $f_w(x,y,z)$ from a word $w$ of length $n$.
\end{prob}

\Cref{thm:polytime-expanded} gives a dense-array algorithm running in time $O(n^5)$ and
space $O(n^4)$ on a deterministic Turing machine.  The expanded dense coefficient table has at most $O(n^4)$ bits with the representation used there, while \cref{cor:random-expanded-output-lower,cor:cubic-output-lower} give $\Omega(n^3)$ lower bounds for sparse binary output, exponentially generically in the positive, freely reduced, and uniformly cyclically reduced models, and in the worst case.  Natural refinements include the following:
\begin{enumerate}[label=(\alph*)]
  \item Determine the true worst-case sparse coefficient bit-size
  \[
    S(n)=\max_{||w||_{\mathcal X}=n}\norm{f_w}_{\bit,1}
  \]
  for fully expanded sparse output, and compare it with the dense-table size.
  \item Decide whether $f_w$ can be computed in time $O(n^4\operatorname{polylog} n)$,
  or more generally in time nearly linear in the output size.
  \item Develop an output-sensitive algorithm whose running time is bounded by a
  small polynomial in $n$ times the number of nonzero monomials $N(w)$.
\end{enumerate}
The present $O(n^5)$ bound is not intended to be optimal; it is an elementary
consequence of the fixed three-variable character-ring description and the linear
bit-size bound for coefficients.

\begin{prob}[Character equivalence, translation equivalence, and complexity]\label{prob:char-translation-complexity}
Study the interaction between the polynomial-time computation of Fricke trace
polynomials and the known equivalence relations on free-group elements. In particular,
compare the complexity of deciding $\SL(2,\C)$-character equivalence,
which by \cref{lem:character-equivalence-fricke,cor:polytime-character-equivalence}
amounts to deciding equality of two Fricke polynomials, with the complexity of deciding
translation equivalence in the sense of Kapovich--Levitt--Schupp--Shpilrain
\cite{KapovichLevittSchuppShpilrain2007}. More generally, determine how the
quantitative invariants studied here, such as degree, support size, and coefficient
height, behave on known families of character-equivalent or translation-equivalent
words, and compare these phenomena with the higher-rank trace-equivalence questions
studied by Lawton--Louder--McReynolds \cite{LawtonLouderMcReynolds2017}.
\end{prob}

\begin{prob}[Higher-rank analogues]\label{prob:higher-rank-analogues}
Develop analogues of the degree, coefficient-growth, and complexity estimates in these
notes for trace polynomials of words in $F_r$ for $r\ge3$ with respect to standard
$\SL(2,\C)$ trace-coordinate systems, and for higher-rank target groups such as
$\SL(3,\C)$.
\end{prob}

For $\SL(2,\C)$ and $F_r$, Horowitz's theorem expresses word traces as polynomials in
finitely many trace coordinates indexed by nonempty ordered products of generators.
For $\SL(3,\C)$, the coordinate rings and trace identities are substantially more
complicated, and the corresponding growth and complexity problems are largely open.
Lawton's work on pairs of $3\times3$ unimodular matrices and on $\SL(3,\C)$ character
varieties of free groups \cite{Lawton2007Pairs,Lawton2008Minimal,Lawton2010AlgInd}
provides explicit coordinate systems in which such questions might be formulated.
The paper of Lawton--Louder--McReynolds \cite{LawtonLouderMcReynolds2017} suggests a
particularly relevant complementary direction: compare the quantitative complexity
of computing trace functions with the complexity of using traces and finite-dimensional
representations to separate conjugacy classes.

\begin{prob}[Higher-rank trace-equivalence complexity]\label{prob:higher-rank-trace-equivalence}
For $n\ge3$, study the expanded trace polynomial of a word $w\in F_2$ as a regular
function on the $\SL(n,\C)$ character variety of $F_2$.  Determine analogues of the degree,
coefficient-growth, support-size, and computation-complexity bounds proved here for
$n=2$.  In particular, relate these quantitative invariants to the problem of whether
non-conjugate words can be $\SL(n,\C)$-trace equivalent, as considered by
Lawton--Louder--McReynolds~\cite{LawtonLouderMcReynolds2017}.
\end{prob}

\section{Disclosure of AI use}

The preparation of this paper substantially relied on chats with ChatGPT, but the author verified all of the proofs given, reviewed and edited the content as needed and takes full responsibility for the content of the paper.

\end{document}